\documentclass[final,1p,times]{amsart}

\usepackage{amssymb}
\usepackage{amsthm}
\usepackage{amscd}
\usepackage{amsmath}
\usepackage{amsfonts}
\usepackage{amssymb}
\usepackage{graphicx}
\usepackage{color}
\usepackage{framed}
\usepackage{comment}
\usepackage{chngcntr}
\numberwithin{equation}{section}
\newtheorem{theorem}{Theorem}[section]
\newtheorem{proposition}[theorem]{Proposition}
\newtheorem{lemma}[theorem]{Lemma}
\newtheorem{corollary}[theorem]{Corollary}
\newtheorem{definition}[theorem]{Definition}
\newtheorem{remark}[theorem]{Remark}
\newtheorem{example}[theorem]{Example}
\usepackage{mathrsfs}
\usepackage{titletoc}
\usepackage{tikz}
\usepackage[breaklinks,colorlinks]{hyperref}
\usepackage[pagewise,mathlines]{lineno}
\usepackage{geometry}
\usepackage{bm}
\usepackage[shortlabels]{enumitem}
\usepackage{mathtools}

\newcommand{\ra}{\rightarrow}
\newcommand{\p}{\partial}

\newcommand{\be}{\begin{equation}}
	\renewcommand{\ra}{\rightarrow}
	\newcommand{\ee}{\end{equation}}
\newcommand{\bea}{\begin{eqnarray}}
	\newcommand{\eea}{\end{eqnarray}}
\newcommand{\bna}{\begin{eqnarray*}}
	\newcommand{\ena}{\end{eqnarray*}}
\renewcommand{\o}{\omega}
\renewcommand{\O}{\Omega}
\renewcommand{\le}{\left}
\newcommand{\ri}{\right}

\newcommand{\Rmnum}[1]{\expandafter\@slowromancap\romannumeral #1@}
\makeatother

\usepackage{bbding}

\begin{document}
%
%
%
%
%
%
%
%

	\title{Nonexistence results for semilinear elliptic equations on metric graphs}
	
	\author{Yang Liu}
	\address{Yau Mathematical Sciences Center, Tsinghua University, Beijing, 100084, China}
	\email{dliuyang@tsinghua.edu.cn}
	
	\author{Yong Lin}
	\address{Yau Mathematical Sciences Center; Department of Mathematical Sciences, Tsinghua University, Beijing, 100084, China}
	\email{yonglin@tsinghua.edu.cn}
	
	\author{Haohang Zhang}
	\address{Yau Mathematical Sciences Center; Department of Mathematical Sciences, Tsinghua University, Beijing, 100084, China}
	\email{zhanghh22@mails.tsinghua.edu.cn}
	\thanks{Corresponding author: Haohang Zhang}
	
	\subjclass[2020]{35R02, 35A15, 39A12}
	
	\keywords{Metric graphs, the vertex-based and edge-based Laplacian, modified distance functions, test functions, a priori estimates}
	
	\begin{abstract}
		In this paper, we study the nonexistence of solutions to semilinear elliptic equations with a positive potential on metric graphs. In particular, the Laplacian under consideration is of a special type, related to both the vertices and edges of metric graphs. We construct a modified distance function, introduce appropriate test functions, and establish the nonexistence of global solutions under suitable volume growth conditions imposed on the potential. More precisely, the nonnegative solutions or sign-changing solutions to the equations are the trivial zero solutions.
	\end{abstract}
	
	\maketitle
	
	\section{Introduction}
	
	Discrete or combinatorial graphs consist of sets of vertices and edges connecting these vertices. These edges primarily serve as abstract relationships between vertices or carry supplementary attributes such as weights and directions. Consequently, a function defined on a discrete graph is generally understood to be defined solely on the graph’s vertices, with no definition on its edges. Accordingly, the gradient and Laplace operator also only consider the values of the function at the vertices. In recent years, the study of partial differential equations on discrete graphs especially infinite and weighted ones has attracted significant interest. While parabolic equations have been widely explored in works like \cite{B-C,E-M,G-T,H-L,H-K-S,K-R,M2,S-S-V,L-H-N}, the elliptic setting has experienced remarkable development. Key contributions include the Schr\"{o}dinger equation \cite{C-W-Y,G-L-Y-3,Z-L-Y1,Z-Z}, the mean field equation \cite{H-L-Y,L-Yang,Liu1}, the Kazdan-Warner equation \cite{S-W,G-L-Y-2} and other equations and inequalities \cite{W1,Sun,G-L-Y-1,G-L-Y-Z,H-L-M,K-L-W,S-Y-Z-2,S-T-Z,L-W-Z}. 

In contrast, metric graphs are regarded as spatially continuous networks, where edges are treated as physical line segments joined at vertices. This continuity enables dynamic phenomena to evolve along the edges. It thus allows metric graphs to characterize the dynamical behaviors of spatial systems across numerous scientific fields, driven by distinct research motivations \cite{R,S-M-A,M-S-V,M,B-K}. Within this framework, dynamical behaviors on metric graphs are typically described by partial differential equations. These equations are defined on the edges and satisfy specific boundary conditions at the vertices. Furthermore, metric graphs can be conceived as one-dimensional manifolds with singularities. Very recently, the study of elliptic problems on metric graphs has also attracted attention from various researchers, as shown in \cite{A-S-T,B-D-S,C-J-S,D-S-T,K-M-N}.

Within traditional graph theory, the Laplacian is defined as an operator that acts on vertex-valued functions (i.e., functions defined on the vertices of the graph). However, in the context of metric graphs, attention shifts to functions defined on edges. This gives rise to the Neumann Laplacian, and the functions it acts on must satisfy certain boundary conditions:  the Kirchhoff transmission condition or the homogeneous Neumann boundary condition. In this paper, we study a new type of Laplacian defined on metric graphs, which not only  includes the difference information between vertices and their neighboring vertices but also incorporates the information of the second-order derivative on edges. Let us comment that the Laplacian $\Delta_\mathcal{G}$ we use is composed of two types of Laplacians: the vertex-based Laplacian $\Delta_\mathcal{V}$ and the edge-based Laplacian $\Delta_\mathcal{E}$. In fact, the Laplacian $\Delta_\mathcal{G}$ we adopt has long appeared in physics literature as the limiting case of a quantum wire (see \cite{K-Z,R-S} for relevant examples). Moreover, in mathematics, it has also been extended to the wave equation by Friedman-Tillich (see \cite{F-T1,F-T2,Fr}), who further developed a complete calculus framework for graphs based on both edges and vertices.

In the light of the above remarks, we investigate the elliptic equation of the form
\begin{equation}\label{s1:1}
	\Delta_\mathcal{G} u(x)+V(x) |u(x)|^{\sigma}\leq0, \quad\text{in}\quad\mathcal{G}.
\end{equation}
This equation is posed on a metric graph $\mathcal{G}$ (see Definition \ref{s0:D1}) over an infinite weighted graph $(\mathcal{V}, E, l)$, where $\mathcal{V}$ is the set of vertices, $E$ denotes the set of edges, and $l$ is the weight function. Here, the function $V: \mathcal{G} \to \mathbb{R}$ is typically referred to as potentials and is assumed to be positive, and the exponent satisfies $\sigma > 1$. Moreover, $\Delta_\mathcal{G}$ signifies the Laplacian on $\mathcal{G}$ defined by $\Delta_{\mathcal{G}}u:=	(\Delta_{\mathcal{V}}u)d\mu_{\mathcal{V}}+(\Delta_{\mathcal{E}}u)d\mu_{\mathcal{E}}$. See \eqref{s2:14} for its specific form. Rather than terming it an operator, it is more like a class of integrating factors (see Section 2.4). Herein, the equation \eqref{s1:1} is actually the counterpart of the expression
$$	\Delta_\mathcal{G} u(x)+V(x) |u(x)|^{\sigma}\le(d\mu_{\mathcal{V}}+d\mu_{\mathcal{E}}\ri)\leq0, \quad\text{in}\quad\mathcal{G},$$
in the sense of integrating factors (see Remark \ref{s2:R1} for more details).

Before outlining our results and proof methods, we first provide a brief overview of relevant results in the existing literature. The study of Problem \eqref{s1:1} has a very rich history when this problem is constructed in Euclidean space or on Riemannian manifolds rather than on graphs as demonstrated in works such as \cite{D-M,G-Sun,G-S,B-P-T}. In recent years, a large number of results have appeared concerning the nonexistence of solutions to the elliptic equation \eqref{s1:1} on combinatorial graphs. To be specific, Gu-Huang-Sun \cite{G-H-S} proposed the Assumption $(p_0)$: there exists $p_0 > 1$ such that for any $x\sim y$ in $\mathcal{V}$, 
$$\frac{\omega(x,y)}{\mu(x)}\geq\frac{1}{p_0},$$
 and illustrated that the semilinear elliptic inequality \eqref{s1:1} in the case where $V\equiv1$ and $d\mu_{\mathcal{E}}=0$ has no nontrivial nonnegative solutions in $\mathcal{V}$ when the volume growth condition 
 $$\mu(B(o,R))\lesssim R^{\frac{2\sigma}{\sigma-1}}(\ln R)^{\frac{1}{\sigma-1}}$$
holds for some $o\in \mathcal{V}$ and all sufficiently large $R$ with $\sigma>1$. Later, Monticelli-Punzo-Somaglia \cite{M-P-S} removed Assumption $(p_0)$ and introduced a more general pseudo-metric $d$ on the weighted graph. They assumed that for some $x_0\in \mathcal{V}$, $R_0>1$, $\alpha\in[0,1]$, $C>0$, there holds
\begin{equation}\label{s1:4}
	\Delta_\mathcal{V} d(x,x_0)\leq \frac{C}{d^\alpha(x,x_0)}, \quad\forall x\in \mathcal{V}\setminus B_{R_0}(x_0),
\end{equation}
and then proved that the only nonnegative solution to the inequality \eqref{s1:1} with $d\mu_{\mathcal{E}}=0$ is identically zero if the positive potential $V$ satisfies
$$\sum_{x\in B_{2R}(x_0)\setminus B_{R}(x_0)}\mu_{\mathcal{V}}(x)V^{-\frac{1}{\sigma-1}}(x)\leq CR^{\frac{(1+\alpha)\sigma}{\sigma-1}},\quad\forall\ R\geq R_0.$$ 
Recently, Meglioli-Punzo \cite{M-P2} showed that if the potential $V\geq v_0>0$ is bounded away from zero and the pseudo-metric $d$ is $q$-intrinsic, i.e., for some $q\geq1$ and $C>0$,
\begin{equation*}
	\frac{1}{\mu_{\mathcal{V}}(x)}\sum_{y\in\mathcal{V},y\sim x}\omega(x,y)d^q(x,x_0)\leq C,\quad\forall x\in\mathcal{V},
\end{equation*}
and $u$ belongs to a suitable weighted space $\ell^p_\varphi(\mathcal{V},\mu_\mathcal{V})$ (where $p\geq1$ and $\varphi$ is an exponentially decaying weight at infinity), then $u\equiv0$ is the only solution to the equation
\begin{equation}\label{s1:3}
	\Delta_{\mathcal{V}} u-Vu=0, \quad\text{in}\quad\mathcal{V}.
\end{equation} 
Subsequently, Biagi-Meglioli-Punzo \cite{B-M-P} further proved that $u\equiv0$ is the only bounded solution for the equation \eqref{s1:3} when the nonnegative potential $V$ vanishes at infinity with a certain rate and $u$ satisfies a specific volume growth condition. For other relevant works on graphs, representative studies are provided in \cite{G-W,M-Q-D,M-P,M-P-S2}.

This paper is devoted to establishing nonexistence results for nontrivial global solutions to \eqref{s1:1}, under appropriate growth conditions, with no constraint on the sign of solutions.  The main innovations of this paper are as follows:
\begin{enumerate}[(a)]
	\item To the best of our knowledge, no Liouville-type theorems for equation \eqref{s1:1} involving Laplacian $\Delta_{\mathcal{G}}$ have been explored so far. It should be noted that equation \eqref{s1:1} in \cite{G-H-S,M-P-S} is formulated in the setting of combinatorial graphs, with only the vertex Laplacian $\Delta_\mathcal{V}$ being considered. Meanwhile, the work presented in \cite{M-P} focuses on metric graphs, yet it only incorporates the edge Laplacian $\Delta_\mathcal{E}$. 
	\item Since our Laplacian is defined on both vertices and edges, substantial differences naturally arise. In particular, severe singularities appear when the distance function is defined on edges.
	\item Although our proof framework has some points in common with the counterparts in \cite{M-P-S,M-P}, many of the existing methods cannot be directly applied to our setting. Specifically, we introduce a modified distance function and construct the integration by parts formulas (see Lemmas \ref{s2:L1} and \ref{s4:L4}) within this framework. While this modified distance is not a pseudo-metric, it still satisfies \eqref{s1:4} at the vertices. 
	\item It is important to note that our results encompass the previous results when $d\mu_{\mathcal{E}}=0$ or $d\mu_{\mathcal{V}}=0$. 
\end{enumerate}

Our analysis covers two cases: nonnegative solutions (see Theorem \ref{s3:T1}) and general sign-changing solutions  (see Theorem \ref{s3:T2}). For the case of nonnegative solutions, the proof is based on a priori estimates (see Lemma \ref{s4:L1}) derived by selecting appropriate test functions. It should be stressed that all these test functions have compact support. On the other hand, this approach cannot be applied for general sign-changing solutions. Even though the proof still relies on a priori estimates (see Lemma \ref{s4:L5}) and test function selection, compactly supported test functions are insufficient in this case. To this end, we draw inspiration from \cite{M-P-S2} and use test functions supported on the entire metric graph, which have a certain exponential decay property at infinity. Finally, the upper bound estimate of the derivative of the test function in \cite{M-P-S,M-P-S2} relies on the use of a pseudo-metric.  By contrast, a further key difficulty in the present work is that our corresponding estimates depend on the derivative of the modified distance (see Lemmas \ref{s4:L2} and \ref{s4:L3}).

The remaining parts of this paper are organized as follows: In Section 2, we describe the relevant mathematical framework, focusing primarily on the concepts associated with metric graphs . In Section 3, we present the assumptions for the metric graphs considered throughout this paper, as well as the main results and their corresponding corollaries. Sections 4 is devoted to proving the results for the elliptic equation \eqref{s1:1}. These include the nonexistence of nonnegative solutions and the nonexistence of sign-changing solutions, all of which are concerned with the case of infinite metric graphs and integrating factors $\Delta_\mathcal{G}$.

	\section{Mathematical framework}

	While comprehensive definitions and results on metric graphs can be found in \cite{B-K,M,M-P}, the present section gathers core basic notions, foundational definitions, and essential preliminaries for analysis on metric graphs, all included for the reader’s convenience.

\subsection{The metric graph setting}

Like combinatorial graphs, a metric graph comprises a countable set $\mathcal{V}$ of vertices and a countable set $E$ of edges. In contrast to combinatorial graphs, however, the edges are treated as intervals glued together at the vertices. Given a function $l:E\ra(0,+\infty]$, it is usually referred to as a weight. We consider the weighted graph $(\mathcal{V}, E, l)$ and regard $l(e)$ as the length of the edge $e\in E$ (denoted as $l_e$ for short). Let 
$$\mathcal{E}:=\bigcup_{e\in E}\{e\}\times(0,l_e).$$
We may give the following definition.
\begin{definition}\label{s0:D1}
	The metric graph $\mathcal{G}$ over the weighted graph $(\mathcal{V}, E, l)$ is the pair $(\mathcal{V},\mathcal{E})$.
\end{definition}
We equip the metric graph $\mathcal{G}$ with maps $i : E \to \mathcal{V}$ assigning the initial vertex of each edge and $j : \{e \in E : l_e < +\infty\} \to \mathcal{V}$ assigning the final vertex, with these vertices collectively referred to as the endpoints of the edge. We always assume for simplicity that  $l_e<+\infty$ for all $e\in E$, and use the following notations,
$$I_e:=(0,l_e),\quad\mathcal{G}_e:=\{e\}\times I_e,\quad\overline{\mathcal{G}}_e:=e\cup\{i(e),j(e)\}\times\overline{I}_e.$$ 
For $e \in E$ and $v\in \mathcal{V}$, we write $e\ni v$ (or $v\in e$) if \(i(e)=v\) or \(j(e)=v\) (i.e., $v$ is an endpoint of $e$). In what follows, we adopt the notational convention of denoting points in $\mathcal{G}$ as $x \in \mathcal{G}$, where either $x = v \in \mathcal{V}$ or $x \in \mathcal{G}_e$ for some $e \in E$. For simplicity, we sometimes make no distinction between $e$ and $I_e$, performing this identification by abuse of language; accordingly, we may write $x\in e$ or $x\in I_e$ instead of $x\in\mathcal{G}_e$, and denote $e\in\mathcal{E}$ as $\mathcal{G}_e\in\mathcal{E}$, without causing confusion. Moreover, this practice introduces no ambiguity when the same notation $x, y, \dots$ is used to denote both points of the edge $e \in E$ and points of the interval $I_e \subseteq \mathbb{R}_+$. For each $e \in \mathcal{E}$, the map $\pi_e \colon \mathcal{G}_e \to I_e$ defined by $\pi_e(\{e\}, x) \equiv \pi_e(x) := x$ sets up a bijection between points of $e\in E$ and points of $I_e$. This map can be extended to a mapping from $\overline{\mathcal{G}}_e$ to $\overline{I}_e = [0, l_e]$ such that \(\pi_e(i(e)) = 0\) and \(\pi_e(j(e)) = l_e\).

\begin{definition}
	Let $\mathcal{G}$ be a metric graph.
	
	(i) A metric graph $\mathcal{G}$ is finite if both $E$ and $\mathcal{V}$ are finite sets; it is infinite otherwise.
	
	(ii) For a vertex $v\in\mathcal{V}$,  its degree $\deg_v \in \mathbb{N}$ counts the number of edges $e \ni v$. The inbound degree $\deg_v^+$ (resp. outbound degree $\deg_v^-$) refers to the number of edges with $j(e) = v$ (resp. $i(e) = v$). Obviously, $\deg_v=\deg_v^++\deg_v^-$. $\mathcal{G}$ is locally finite if $\deg_v < \infty$ for all $v \in \mathcal{V}$. 
	
	(iii) For two vertices $u, v \in \mathcal{V}$, a path connecting them is a set $\{x_1, \dots, x_n\} \subset \mathcal{G}$ ($n \in \mathbb{N}$) such that $x_1 = u$, $x_n = v$, and for each $k = 1, \dots, n-1$, there exists an edge $e_k$ where both $x_k$ and $ x_{k+1}$ lie in $ \overline{\mathcal{G}}_{e_k}$. A path is closed if its start and end vertices coincide $(u = v)$. A closed path is termed a cycle if it does not pass through the same vertex more than once.
	
	(iv) A metric graph $\mathcal{G}$ is connected if there exists a path between any two distinct vertices $v, w \in \mathcal{V}$. A connected graph with no cycles is called a tree.
	
	(v) The boundary of the metric graph is given by $\p\mathcal{G} := \{v\in\mathcal{V}\mid \deg_v=1\}$.
\end{definition}

\subsection{Two volume measures}

In traditional combinatorial analysis, concepts such as integrals, Laplacians, and Rayleigh quotients are all defined using a single volume measure. In this paper, we depart from this convention by employing two distinct volume measures.

We first define an edge measure. A connected metric graph $\mathcal{G}$ can naturally be endowed with the structure of a metric measure space. To elaborate, for any two points $x, y \in \mathcal{G}$, we may treat them as vertices of a connecting path $P$ (with $x$ and $y$ possibly added to the vertex set $\mathcal{V}$ if necessary). The length of $P$ is defined as the sum of the length of its $n$ edges $e_k$, i.e., $l(P) := \sum_{k=1}^n l_{e_k}$. The distance $ d(x, y)$ between $x$ and $y$ is then given by the infimum of the lengths of all such connecting paths:
$$d(x, y) \coloneqq \inf \left\{ l(P) \mid P \text{ connects } x \text{ and } y \right\}.$$
This makes $\mathcal{G}$ a metric space, which in turn induces a topological structure via the metric topology. Let $\mathcal{B} = \mathcal{B}(\mathcal{G})$ denote the Borel $\sigma$-algebra of $\mathcal{G}$. Let $B(x_0,r)$ denote the open ball on the metric graph $\mathcal{G}$ with center $x_0\in\mathcal{G}$ and radius $r>0$, which consists of all points in $\mathcal{G}$ whose distance from $x_0$ is less than $r$. If $\mathcal{G}$ is locally finite, then $B(x_0,r)$ is a union of finitely many open subintervals of edges and finitely many entire edges for $r$ small enough. Hence, a Radon measure $\mu_{\text{Rad}} \colon \mathcal{B} \to [0, \infty]$ on $\mathcal{G}$ is induced via the Lebesgue measure $\lambda$ on each interval $I_e$, specifically,
\begin{equation}\label{s2:8}
	\mu_{\text{Rad}}(\O):=\sum_{e\in\mathcal{E}}\lambda(I_e\cap \O),\quad\forall\ \O\in\mathcal{B}.
\end{equation}
\begin{definition}\label{s2:D3}
	Let $\mu_{\mathcal{V}}:\mathcal{V}\ra\mathbb{R}$ be a vertex measure supported on the vertex set $\mathcal{V}$, with $\mu_{\mathcal{V}}(v) > 0$ for every $v \in \mathcal{V}$. Moreover, the Radon measure $\mu_{\text{Rad}}$ naturally defines an edge measure $\mu_{\mathcal{E}}:\mathcal{E}\ra(0,l_e]$, satisfying $\mu_{\mathcal{E}}(v)=0$ for all $v\in\mathcal{V}$. 
\end{definition}

We denote by $\mathcal{F}$ the set of all functions $f : \mathcal{G} \to \mathbb{R}$. For any $f \in \mathcal{F}$, we let $f_e := f|_{\overline{I}_e}$. Every function $f \in \mathcal{F}$ thus canonically induces a countable family of functions $\{f_e\}_{e\in \mathcal{E}}$, where \(f_e: \overline{I}_e \to \mathbb{R}\), and we accordingly write
\begin{equation*}
	f=\bigoplus_{e\in\mathcal{E}}f_e.
\end{equation*}
We define $f^{(h)} \coloneqq \bigoplus_{e\in \mathcal{E}} f^{(h)}_e$ for $h \in \mathbb{N}$, where the derivative $f^{(h)}_e \coloneqq \frac{d^h f_e}{dx^h}$ exists on $I_e$ for all $e \in E$. We also adopt the notation $f^{(0)} \equiv f$, $f^{(1)} \equiv f^\prime$ and $f^{(2)} \equiv f^{\prime\prime}$. We say that $f$ is continuous on $\mathcal{G}$, writing $f \in C(\mathcal{G})$, if $f_e \in C(\overline{I}_e)$ for all $e \in E$ and, at every vertex $x$, $f_e(x)$ coincides for all $e\ni x$. We set
$$C^k(\mathcal{G}):=\{f\in C(\mathcal{G})\mid f_e\in C^{(k)}(\overline{I}_e), \forall e\in\mathcal{E}, f^{(h)}\in C(\mathcal{G}),\forall h=1,\dots,k\},\quad(k\in\mathbb{N}),$$
and $C^0(\mathcal{G})=C(\mathcal{G})$. We further denote $\mathcal{F}_0$ as the subspace of  $\mathcal{F}$ consisting of functions where $f_e(x)$ is consistent for every $x\in\mathcal{V}$ and all $e\ni x$.

Based on \eqref{s2:8}, for any measurable function $f\in \mathcal{F}_0$, we set
\begin{equation}\label{s2:11}
	\int_\mathcal{G}f(x) d\mu_{\mathcal{E}}:=\sum_{e\in \mathcal{E}}\int_0^{l_e} f_e(x)dx,\quad \int_{\Omega} f(x)d\mu_{\mathcal{E}}:=\int_\mathcal{G}f(x) \textbf{1}_{\Omega}(x)d\mu_{\mathcal{E}},\quad\forall\ \Omega\in\mathcal{B}.
\end{equation}
Here, $\textbf{1}_{\O}$ denotes the characteristic function of the set $\O$, and we use the standard notation $dx \equiv d\lambda$. For integrals over the vertex set $\mathcal{V}$, we still adhere to the integral form used in combinatorial graphs, i.e.
\begin{equation*}
    \int_\mathcal{G}f(x) d\mu_{\mathcal{V}}:=\sum_{x\in\mathcal{V}}\mu_{\mathcal{V}}(x)f(x),\quad \int_{\Omega} f(x)d\mu_{\mathcal{V}}:=\int_\mathcal{G}f(x) \textbf{1}_{\Omega}(x)d\mu_{\mathcal{V}},\quad\forall\ \Omega\in\mathcal{B}.
\end{equation*}
The vertex-based and edge-based integral forms as above determine an integral over the metric graph:
\begin{equation}\label{s2:22}
	\int_{\mathcal{G}}f(x)d\mu_{\mathcal{G}}:=	\int_{\mathcal{G}}f(x)\left(d\mu_{\mathcal{V}}+d\mu_{\mathcal{E}}\right)=\int_\mathcal{G}f(x) d\mu_{\mathcal{V}}+	\int_\mathcal{G}f(x) d\mu_{\mathcal{E}},\quad\forall\ f\in \mathcal{F}_0.
\end{equation}

For $1\leq p\leq\infty$, we define $\ell^p(\mathcal{V}):=\left\{f\in \mathcal{F}_0:\|f\|_{\ell^p(\mathcal{V})}<\infty\right\}$, with the norm
	\begin{equation*}
	\|f\|_{\ell^p(\mathcal{V})}= \left\{\begin{aligned}
		&\left(\sum_{x\in \mathcal{V}}\mu_{\mathcal{V}}(x)|f(x)|^p\right)^{\frac{1}{p}}, &1\leq p<\infty,\\
		&	\sup_{x\in\mathcal{V}}|f(x)|, &p=\infty.\ \ \ \ \ \ \,\end{aligned}\right.
\end{equation*}
For each $p \in [1, \infty]$, the Lebesgue spaces on the metric graph $\mathcal{G}$ (denoted $L^p(\mathcal{G})$ or $L^p(\mathcal{G}, \mu_\mathcal{G}$) take the form
$$L^p(\mathcal{G}):=\le(\bigoplus_{e\in\mathcal{E}}L^p(I_e,\lambda)\ri)\oplus \ell^p(\mathcal{V}),$$
endowed with the corresponding norm
$$\|f\|_p:=\|f\|_{p,\mathcal{V}}+\|f\|_{p,\mathcal{E}}=\|f\|_{\ell^p(\mathcal{V})}+\sum_{e\in\mathcal{E}}\|f_e\|_p=\left(\sum_{x\in \mathcal{V}}\mu_{\mathcal{V}}(x)|f(x)|^p\right)^{\frac{1}{p}}+\sum_{e\in\mathcal{E}}\le(\int_0^{l_e}|f_e|^pdx\ri)^\frac{1}{p},\quad\forall\ p\in[1,\infty),$$
$$\|f\|_{\infty}:=\text{ess} \sup_{x\in\mathcal{G}}|f(x)|.$$

We further let $\varphi: \mathcal{G} \to \mathbb{R}$ be a positive continuous function. For each \(p \in [1, +\infty)\), we define the weighted Lebesgue space $L_\varphi^p(\mathcal{G})$ as follows:
\begin{equation}\label{s2:13}
	L_\varphi^p(\mathcal{G}):=\left(\bigoplus_{e \in \mathcal{E}} L_\varphi^p(I_e, \lambda) \right)\oplus \ell^p_{\varphi}(\mathcal{V})= \left\{ f \in \mathcal{F}_0 \mid \left(\sum_{x\in \mathcal{V}}\mu_{\mathcal{V}}(x)\varphi(x)|f(x)|^p\right)^{\frac{1}{p}}+\sum_{e \in \mathcal{E}} \left( \int_0^{l_e} |f_e|^p \varphi_e \, dx \right)^{\frac{1}{p}} < +\infty \right\}.
\end{equation}

\subsection{The Laplacian on metric graphs}

Let $\o:E\rightarrow \mathbb{R}^+$ be a positive symmetric weight satisfying ${\o(x,y)>0}$ and $\o(x,y)=\o(y,x)$ for every edge ${(x,y)\in E}$. For any $f\in \mathcal{F}_0$, the vertex-based Laplacian on $\mathcal{G}$ is defined as
\begin{equation}\label{s2:7}
	\Delta_\mathcal{V} f(x)=\frac{1}{\mu_\mathcal{V}(x)}\sum_{y\sim x}\o(x,y)(f(y)-f(x)),
\end{equation}
where $y\sim x$ means $y$ is adjacent to $x$, i.e., $(x,y)\in E$. This is the usual combinatorial Laplacian. It is not difficult to see that the following integration by parts formula is valid:
\begin{equation}\label{s2:18}
	\int_\mathcal{G}f(\Delta_{\mathcal{V}}g)d\mu_{\mathcal{V}}=	\int_\mathcal{G}g(\Delta_{\mathcal{V}}f)d\mu_{\mathcal{V}},
\end{equation}
provided that at least one of the functions $f,g\in \mathcal{F}_0$ has finite support.

We consider a space
\begin{equation}\label{s2:2}
	\mathcal{D}(\mathcal{G}):=\left\{f\in C(\mathcal{G}): f_e\in C^2(I_e)\cap C^1(\overline{I}_e), f_e^{\prime\prime}\in L^\infty(I_e), \forall\ e\in\mathcal{E}\right\},
\end{equation}
and note that if $f \in \mathcal{D}(\mathcal{G})$, then $f$ is in $C(\mathcal{G})$, but generally not in $C^1(\mathcal{G})$; specifically, for a vertex $v \in \overline{\mathcal{G}}_{e_1} \cap \overline{\mathcal{G}}_{e_2}$ (where \(e_1, e_2 \in E\)), it may hold that $f^\prime_{e_1}(v) \neq f^\prime_{e_2}(v)$. 

Using the functional framework introduced above, the metric graph $\mathcal{G}$ can be endowed with a  edge-based Laplacian $\Delta_\mathcal{E}$, an operator that acts on $\mathcal{D}(\mathcal{G})$ in the canonical way
\begin{equation}\label{s2:12}
	\Delta_{\mathcal{E}}  f(x):=f^{\prime\prime}_e(x),\quad\forall\ f\in \mathcal{D}(\mathcal{G}),\ e\in\mathcal{E},\ x\in I_e.
\end{equation}
The outer normal derivative of $f_e$ at a vertex $v \in \mathcal{V}$ is denoted by
\begin{equation}\label{s2:3}\frac{df_e(v)}{dn}=\left\{\begin{array}{lll}
		f_e^\prime(v), &\text{if}\ j(e)=v,&\\[1ex]
		-f_e^\prime(v),&\text{if}\ i(e)=v.&\end{array}\ri.		
\end{equation}
For any $x\in\mathcal{V}$, we define 
\begin{equation}\label{s2:4}
	[\mathcal{K}(f)](x):=\sum_{e\ni x}\frac{df_e(x)}{dn}.
\end{equation}

 Given that we have introduced two  Laplacians corresponding to measures $\mu_{\mathcal{V}}$ and $\mu_{\mathcal{E}}$, it is thus necessary to define the Laplacian $\Delta_{\mathcal{G}}$ on the metric graph from the perspective of integrating factors. Specifically, we need to mark functions with $d\mu_{\mathcal{V}}$ or $d\mu_{\mathcal{E}}$ to clarify how the function should be integrated against other functions. In this paper, we use a similar setting as in \cite{F-T2,Fr} and always consider the Laplacian $\Delta_\mathcal{G}$ of the form
 \begin{equation}\label{s2:14}
 	\Delta_{\mathcal{G}}f:=	(\Delta_{\mathcal{V}}f)d\mu_{\mathcal{V}}+(\Delta_{\mathcal{E}}f)d\mu_{\mathcal{E}},\quad\forall f \in \mathcal{D}(\mathcal{G}),
 \end{equation}
 where $\Delta_{\mathcal{V}}$ and $\Delta_{\mathcal{E}}$ are defined in accordance with \eqref{s2:7} and \eqref{s2:12}. According to \cite{F-T1}, as an integrating factor, $\Delta_{\mathcal{G}}f$ can generate a linear functional $\mathcal{L}_{\Delta_{\mathcal{G}}f}$ on $\mathcal{F}_0$ by means of 
\begin{equation}\label{s2:15}
\mathcal{L}_{\Delta_{\mathcal{G}}f}(g):=\int_\mathcal{G} g \Delta_{\mathcal{G}}f=\int_\mathcal{G}g(\Delta_{\mathcal{V}}f)d\mu_{\mathcal{V}}+\int_\mathcal{G}g(\Delta_{\mathcal{E}}f)d\mu_{\mathcal{E}}.
\end{equation}

\subsection{Definition of solutions}

We now give the definition of a solution to the equation \eqref{s1:1}.
\begin{definition}
	We say that $u\in\mathcal{D}(\mathcal{G})$ is a solution of \eqref{s1:1} whenever, for each $e\in\mathcal{E}$,
	\begin{equation*}
		u_e^{\prime\prime}(x)+V_e(x)|u_e(x)|^\sigma\leq0,\quad\forall\ x\in(0, l_e),
	\end{equation*} 
	and, for each $x\in\mathcal{V}$,
	\begin{equation*}
	   \Delta_\mathcal{V} u(x) +V(x) |u(x)|^\sigma\leq0,
	\end{equation*} 
	with 
	\begin{equation}\label{s2:19}
		[\mathcal{K}(u)](x)= 0.
	\end{equation}
	Furthermore, $u$ is called a nonnegative solution if $u(x) \geq 0$ for all $x \in \mathcal{G}$.
\end{definition}

For interior vertices $x\in \mathcal{G}\setminus\p\mathcal{G}$, the condition $ [\mathcal{K}(u)](x)= 0$ is referred to as the Kirchhoff transmission condition; for boundary vertices $x \in \partial\mathcal{G}$, this condition corresponds to the homogeneous Neumann boundary condition. 

It can be seen that, in comparison with solutions of equation \eqref{s1:3} on combinatorial graphs, solutions to \eqref{s1:1} on metric graphs not only satisfy the vertex-wise solution properties of combinatorial graphs but are additionally required to be defined on edges  and must satisfy certain boundary conditions at vertices. 

\begin{remark}\label{s2:R1}
	If $u\in \mathcal{D}(\mathcal{G})$ is a solution of \eqref{s1:1}, then it must satisfy 
	$$\Delta_\mathcal{V}u(x)+V(x)|u(x)|^\sigma\leq0,\quad\Delta_\mathcal{E}u(x)+V(x)|u(x)|^\sigma\leq0,$$
	and \eqref{s2:19}. Hence, in the sense of integrating factors, it fulfills 
		\begin{equation}\label{s2:21}
		\Delta_\mathcal{G} u(x)+V(x) |u(x)|^\sigma\le(d\mu_{\mathcal{V}}+d\mu_{\mathcal{E}}\ri)\leq0.
	\end{equation}
	On the other hand, most existing results only considered one of the two inequalities mentioned above. The first inequality has been investigated for combinatorial graphs in \cite{G-H-S,M-P-S}, whereas a variant of the second inequality has been analyzed for metric graphs in \cite{M-P}.
\end{remark}

\section{Statement of the main results}

		In this section, we are going to state our main theorems concerning the Laplacian $\Delta_{\mathcal{G}}$ on a metric graph $\mathcal{G}$. For any $x_0\in\mathcal{V}$ and $R>0$, we denote by
	$$B_R(x_0)=\le\{x\in\mathcal{G}\mid d(x,x_0)<R\ri\}$$
	the ball of radius $R > 0$ centered at $x_0$.  In the sequel, we always make the following hypothesis:
	\begin{equation}\label{s3:1}
		\begin{cases}
			\text{ (i) }	\mathcal{G} \text{ is an infinite, connected, locally finite metric graph.}\\
		\text{ (ii) }	\text{For all } e\in\mathcal{E}, l_e<+\infty \text{  and } i(e)\not=j(e)  \text{ (there are no loops or rays). }\\
		\text{ (iii) }	\text{There exists a vertex } x_0\in\mathcal{V} \text{ and a sequence } \{x_n\}\subset \mathcal{V}  \text{ such that } d(x_n, x_0) \text{ tends to } +\infty. \\
		\text{ (iv) }	\text{ For any } R>0, \text{ the set } B_R:=B_R(x_0) \text{ is finite, consisting of finitely many } v\in\mathcal{V}\text{ and } e\in \mathcal{E}.\\
		\text{ (v) }	\text{ There exists a constant } C > 0	\text{  such that for every } x\in\mathcal{V}, \sum_{y\sim x}\o(x,y)\leq C\mu_{\mathcal{V}}(x).\\
		\text{ (vi) }	\text{ Suppose }j:=\sup_{e\in \mathcal{E}}l_e<\infty,\text{ and } r:=\inf_{e\in\mathcal{E}}l_e>0.
		\end{cases}
	\end{equation}
	
	Fix $x_0\in\mathcal{V}$. For any $x\in \mathcal{G}$, we recall the definition of the distance $d(x,x_0)$. If $w\in \mathcal{V}$, then there exists a shortest path $P:x_0\sim x_1\sim x_2 \sim \cdots\sim x_k=w$, such that 
	$$d(w,x_0)=\sum_{n=1}^kl_{e_n},$$ 
	where $e_n$ denotes the edge between $x_{n-1}$ and $x_n$ for all $n=1,\cdots, k$. In this case, for every edge $e\ni w$, the one-sided derivative of $d_e(\cdot,x_0)$ along $e$ at $w$ is $1$ or $-1$. On the other hand, if $x\in e$, we let $x$ lie in the interior of edge $e$ with endpoints $x_l$ and $x_r$. Then the distance is given by
	$$d(x,x_0)=\left\{\begin{array}{lll}
		\min\{d(x_l,x_0)+x,d(x_r,x_0)+l_e-x\}, &\text{if}\ \pi_e(x_l)=0\ \text{and}\ \pi_e(x_r)=l_e,&\\[1ex]
		\min\{d(x_l,x_0)+l_e-x,d(x_r,x_0)+x\}, &\text{if}\ \pi_e(x_l)=l_e\ \text{and}\ \pi_e(x_r)=0.&\end{array}\ri.	$$ 
	Owing to this minimality property of the distance function on the interior of edges,
	the distance from an interior point to the fixed vertex $x_0$ exhibits a more complex structure. Specifically, as a point $x$ moves along edge $e$ from $i(e)$ to $j(e)$, one of the following three cases arises:
	\begin{itemize}
		\item $d(x,x_0) = d\big(i(e),x_0\big) + x$;
		\item $d(x,x_0) = d\big(j(e),x_0\big) + l_e - x$;
		\item there exists a point $q_e\in e$ such that
		\[
		d\big(i(e),x_0\big) + q_e
		= d\big(j(e),x_0\big) + l_e - q_e,
		\]
		and
		\[
		d(x,x_0)=
		\begin{cases}
			d\big(i(e),x_0\big) + x, & x\in[i(e),q_e],\\
			d\big(j(e),x_0\big) + l_e - x, & x\in[q_e,j(e)].
		\end{cases}
		\]
	\end{itemize}
	Here and in the sequel, we identify points on edge $e$ with points in interval $I_e$ and use them interchangeably. In the last case, the derivative of the distance function fails to exist at some interior points of edges. This occurs because the left-hand derivative at such points is $1$ while the right-hand derivative is $-1$. It is straightforward to verify that each edge contains at most one such interior point. We regard these interior points of edges (where the distance function is non-differentiable) as additional vertices and collect them together to form a vertex set $\mathcal{V}_0$. Clearly, $\mathcal{V}\cap\mathcal{V}_0=\emptyset$. 
	
	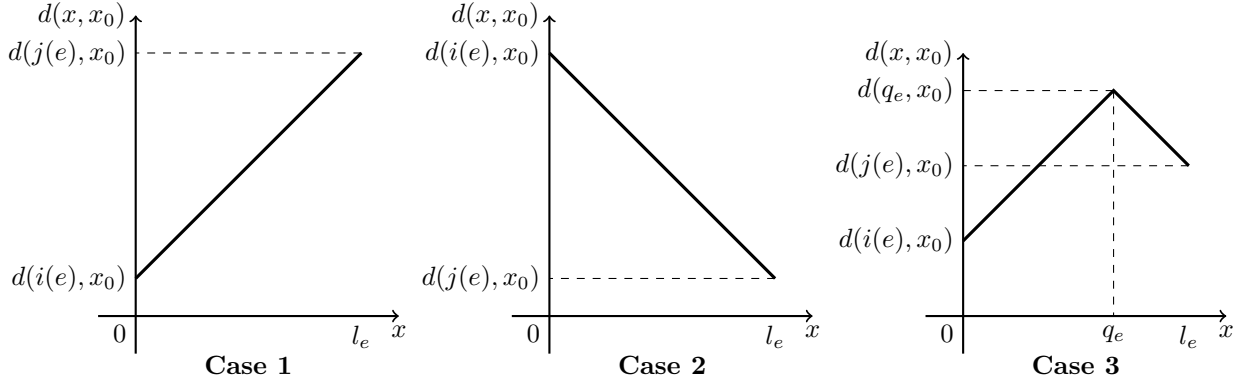
\begin{figure}[htbp]
		\centering
		\begin{tikzpicture}
			
			\begin{scope}[xshift=0cm, local bounding box=caseOne]
				\def\lE{3.0}     
				\def\dU{0.5}     
				\def\dV{3.5}     
				
				\draw[->, thick] (-0.5,0) -- (\lE + 0.5,0) node[below] {$x$};
				\draw[->, thick] (0,-0.5) -- (0,\dV + 0.5) node[left] {$d(x, x_0)$};
				
				\coordinate (startPoint) at (0, \dU);
				\coordinate (endPoint) at (\lE, \dV);
				
				\draw[very thick] (startPoint) -- (endPoint);
				
				\node[below left] at (0,0) {$0$};
				\node[below] at (\lE,0) {$l_e$};
				
				\draw[dashed] (startPoint) -- (0,\dU) node[left] {$d(i(e),x_0)$};
				\draw[dashed] (endPoint) -- (0,\dV) node[left] {$d(j(e),x_0)$};
				
				
				\node[below, font=\bfseries] at (\lE/2.0, -0.4) {Case 1};
			\end{scope}

			\begin{scope}[xshift=5.5cm, local bounding box=caseTwo]
				\def\lE{3.0}     
				\def\dU{3.5}     
				\def\dV{0.5}     
				
				\draw[->, thick] (-0.5,0) -- (\lE + 0.5,0) node[below] {$x$};
				\draw[->, thick] (0,-0.5) -- (0,\dU + 0.5) node[left] {$d(x, x_0)$};
				
				\coordinate (startPoint) at (0, \dU);
				\coordinate (endPoint) at (\lE, \dV);
				
				\draw[very thick] (startPoint) -- (endPoint);
				
				\node[below left] at (0,0) {$0$};
				\node[below] at (\lE,0) {$l_e$};
				
				\draw[dashed] (startPoint) -- (0,\dU) node[left] {$d(i(e),x_0)$};
				\draw[dashed] (endPoint) -- (0,\dV) node[left] {$d(j(e),x_0)$};
				
				
				\node[below, font=\bfseries] at (\lE/2.0, -0.4) {Case 2};
			\end{scope}

			\begin{scope}[xshift=11cm, local bounding box=caseThree]
				\def\lE{3.0}     
				\def\dU{1}     
				\def\dV{2}     
				
				\def\qE{2} 
				\def\maxD{3}
				
				\draw[->, thick] (-0.5,0) -- (\lE + 0.5,0) node[below] {$x$};
				\draw[->, thick] (0,-0.5) -- (0,\maxD + 0.5) node[left] {$d(x, x_0)$};
				
				\coordinate (startPoint) at (0, \dU);
				\coordinate (cutPoint) at (\qE, \maxD);
				\coordinate (endPoint) at (\lE, \dV);
				
				\draw[very thick] (startPoint) -- (cutPoint) -- (endPoint);
				
				\node[below left] at (0,0) {$0$};
				\node[below] at (\lE,0) {$l_e$};
				
				\draw[dashed] (startPoint) -- (0,\dU) node[left] {$d(i(e),x_0)$};
				\draw[dashed] (endPoint) -- (0,\dV) node[left] {$d(j(e),x_0)$};
				
				\draw[dashed] (cutPoint) -- (\qE, 0) node[below] {$q_e$};
				\draw[dashed] (cutPoint) -- (0, \maxD) node[left] {$d(q_e,x_0)$};
				
				
				\node[below, font=\bfseries] at (\lE/2.0, -0.4) {Case 3};
			\end{scope}
			
		\end{tikzpicture}
		\caption{Three cases of the distance $d(x, x_0)$ for $x$ moving along the edge. The last case causes singularity of derivative.}
		\label{fig:distanceFunctions}
	\end{figure}
	
	Since boundary terms arise in the integration by parts formula and the distance function $d$ is non-differentiable at singular points $q_e\in\mathcal{V}_0$, we introduce a modified distance function $\tilde{d}$ via mollification (detailed in Subsection 4.1). This ensures that the derivative of the modified distance function vanishes at the midpoints of all edges containing singular points, while its one-sided derivatives also vanish at every $x\in\mathcal{V}$. We now present the nonexistence results concerning the elliptic equation \eqref{s1:1}. Specifically, our goal is to show that, under suitable assumptions, the global nonnegative solution of \eqref{s1:1} is the identically zero solution. 
		
		\begin{theorem}\label{s3:T1}
		Let $\mathcal{G}$ be a metric graph satisfying \eqref{s3:1}. Suppose that $u$ is a nonnegative solution of \eqref{s1:1} with $\sigma>1$, and that the potential $V:\mathcal{G}\ra\mathbb{R}$ is a positive function satisfying 
		\begin{equation}\label{s3:2}
			\sum_{x\in E_R\cap \mathcal{V}} \mu_{\mathcal{V}}(x)V^{-\frac{1}{\sigma-1}}(x)\leq CR^{\frac{\sigma}{\sigma-1}},\quad\sum_{e \in E_R\cap\mathcal{E}}\int_{0}^{l_e} V_e^{-\frac{1}{\sigma-1}}(x)dx\leq CR^{\frac{\sigma}{\sigma-1}},
		\end{equation}
		for some constant $C>0$ and every $R\geq R_0=\max\{2j,1\}$, where \begin{equation}\label{s3:3}
			E_R=\{x\in\mathcal{G}:R\leq d(x,x_0)\leq 2R\}.
		\end{equation}
		Then $u\equiv0$ on $\mathcal{G}$.
	\end{theorem}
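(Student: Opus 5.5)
The approach is the test function method of Mitidieri--Pohozaev type, as in \cite{M-P-S,M-P}, adapted to the modified distance $\tilde d$. Fix a smooth cutoff $\eta:[0,\infty)\ra[0,1]$ with $\eta\equiv1$ on $[0,1]$, $\eta\equiv0$ on $[2,\infty)$ and $|\eta'|+|\eta''|\le C$. Set $\varphi_R(x)=\eta(\tilde d(x)/R)^s$ with $s>\frac{2\sigma}{\sigma-1}$ large. Here $\tilde d$ is the mollified distance: $|\tilde d-d|\le C j$, $|\tilde d'|\le C$, $|\tilde d''|\le C$ on edges, one-sided derivatives of $\tilde d$ vanish at every vertex, and $\Delta_{\mathcal{V}}\tilde d\le C$ at vertices by \eqref{s3:1}(v). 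Then $\varphi_R$ has compact support in $B_{2R+Cj}$, which is finite by \eqref{s3:1}(iv). Moreover $\varphi_R\in\mathcal{D}(\mathcal{G})$ and $\mathcal{K}(\varphi_R)=0$ at every vertex.

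\textbf{Step 1 (integration by parts).} Multiply the edge inequality by $\varphi_R$ and integrate over each edge. Multiply the vertex inequality by $\varphi_R\mu_{\mathcal V}$ and sum. On edges, integrating by parts twice produces boundary terms involving $u_e'\varphi_R$ and $u_e\varphi_R'$ at the endpoints. The first vanish after summing over $e\ni x$, by the Kirchhoff condition \eqref{s2:19}. The second vanish because $\varphi_R'$ has vanishing one-sided derivatives at vertices. On vertices, use \eqref{s2:18}, valid since $\varphi_R$ has finite support. This is the content I expect of Lemma \ref{s2:L1}, and it gives
\[
\int_{\mathcal G}V u^\sigma\varphi_R\,d\mu_{\mathcal G}\le \int_{\mathcal G} u\,|\Delta_{\mathcal V}\varphi_R|\,d\mu_{\mathcal V}+\int_{\mathcal G}u\,|\varphi_R''|\,d\mu_{\mathcal E}.
\]

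\textbf{Step 2 (a priori estimate, Lemma \ref{s4:L1}).} Apply Young's inequality with exponents $\sigma,\sigma'$, writing $u|\Delta\varphi_R|=(uV^{1/\sigma}\varphi_R^{1/\sigma})(V^{-1/\sigma}\varphi_R^{-1/\sigma}|\Delta\varphi_R|)$. Absorbing the first factor on the left yields
\[
\int V u^\sigma\varphi_R\,d\mu_{\mathcal G}\le C\int V^{-\frac1{\sigma-1}}\varphi_R^{-\frac1{\sigma-1}}|\Delta_{\mathcal V}\varphi_R|^{\sigma'}d\mu_{\mathcal V}+C\int V^{-\frac1{\sigma-1}}\varphi_R^{-\frac1{\sigma-1}}|\varphi_R''|^{\sigma'}d\mu_{\mathcal E}.
\]

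\textbf{Step 3 (derivative bounds, main obstacle; Lemmas \ref{s4:L2}, \ref{s4:L3}).} On edges the chain rule gives
\[
|\varphi_R''|\le C\eta^{s-2}\big(R^{-2}|\tilde d'|^2+R^{-1}|\tilde d''|\big)\le CR^{-1}\varphi_R^{(s-2)/s}
\]
for $R\ge R_0$. At vertices, Taylor expansion of $\eta^s$ along edges of length $\le j$ gives
\[
|\Delta_{\mathcal V}\varphi_R|\le C R^{-1}\varphi_R^{(s-2)/s}\Big(\tfrac{1}{\mu_{\mathcal V}}\textstyle\sum_{y\sim x}\omega(x,y)\Big)\le CR^{-1}\varphi_R^{(s-2)/s}.
\]
Here one must handle the jump of $\tilde d$ across a neighbor and the fact that $\eta^{s-2}$ can be evaluated at neighbors; the condition $R\ge 2j$ keeps neighbors within comparable annuli. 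Both derivatives are supported where $\tilde d\in[R,2R]$, which is contained in a slightly enlarged annulus $E_{R'}$ with $R'\sim R$. The hard part is controlling $\tilde d''$ near the singular points $q_e$ uniformly; this relies on $r=\inf l_e>0$, so that the mollification scale is fixed. Since $s$ is large, the powers of $\varphi_R$ are nonnegative. Combining with \eqref{s3:2},
\[
\int V u^\sigma\varphi_R\,d\mu_{\mathcal G}\le CR^{-\sigma'}R^{\frac{\sigma}{\sigma-1}}=C,
\]
the exponents cancelling exactly since $\sigma'=\frac{\sigma}{\sigma-1}$.

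\textbf{Step 4 (conclusion).} Let $R\ra\infty$. By monotone convergence, $\int Vu^\sigma\,d\mu_{\mathcal G}<\infty$. Now redo Step 2 using Hölder's inequality instead of Young's inequality, restricting the right side to $\operatorname{supp}\varphi_R'\subset \{R\le\tilde d\le 2R\}$:
\[
\int Vu^\sigma\varphi_R\,d\mu_{\mathcal G}\le C\Big(\int_{\{R\le \tilde d\le 2R\}}Vu^\sigma\varphi_R\,d\mu_{\mathcal G}\Big)^{1/\sigma}.
\]
The right side tends to $0$ as the tail of a convergent integral. Hence $\int Vu^\sigma\,d\mu_{\mathcal G}=0$, and $V>0$ forces $u\equiv0$ on $\mathcal V$ and on every edge.
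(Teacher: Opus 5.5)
\textbf{There is a genuine gap, at the vertices.} Your overall architecture is the paper's: the mollified distance $\tilde d$, the test function $\eta(\tilde d/R)^s$, integration by parts with vertex boundary terms killed by \eqref{s2:19} and by $\tilde d_e'=0$ at vertices, Young's inequality for an a priori bound, then H\"older on the annulus. Your edge estimates are fine. The failing step is the vertex bound in Step 3, $|\Delta_{\mathcal V}\varphi_R(x)|\leq CR^{-1}\varphi_R(x)^{(s-2)/s}$, which is false.

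Take consecutive vertices $y\sim x$ on a path from $x_0$ to a vertex beyond $2R$, with $d(y,x_0)<2R\leq d(x,x_0)$; such a pair exists by connectedness and \eqref{s3:1}(iii). Then $\varphi_R(x)=0$, while $\Delta_{\mathcal V}\varphi_R(x)\geq \omega(x,y)\varphi_R(y)/\mu_{\mathcal V}(x)>0$, for instance when $\eta>0$ on $[0,2)$. More generally, near the zero set of $\eta$ the Taylor remainder involves $\eta^{s-2}$ at an intermediate point, which is not controlled by $\eta^{s-2}$ at $x$. The condition $R\geq 2j$ keeps neighbours at comparable distances, not at comparable values of $\eta$.

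Consequently the vertex term $\int V^{-\frac{1}{\sigma-1}}\varphi_R^{-\frac{1}{\sigma-1}}|\Delta_{\mathcal V}\varphi_R|^{\sigma'}\,d\mu_{\mathcal V}$ in Step 2 is $+\infty$. The absorption fails, so the finiteness of $\int_{\mathcal G}Vu^\sigma\,d\mu_{\mathcal G}$, on which Step 4 rests, is not proved. The information is lost in Step 1, where you replaced $-\Delta_{\mathcal V}\varphi_R$ by $|\Delta_{\mathcal V}\varphi_R|$. At exactly these vertices $\Delta_{\mathcal V}\varphi_R>0$, so $-u\,\Delta_{\mathcal V}\varphi_R\leq 0$ and they should simply be discarded.

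\textbf{The missing idea} is the one-sided convexity estimate used in the paper (see \eqref{s4:16}). Write $\eta_R=\eta(\tilde d/R)$. Convexity of $t\mapsto t^s$ gives $\eta_R^s(y)-\eta_R^s(x)\geq s\,\eta_R^{s-1}(x)\bigl(\eta_R(y)-\eta_R(x)\bigr)$. Hence
\[
-\Delta_{\mathcal V}(\eta_R^s)(x)\leq -s\,\eta_R^{s-1}(x)\,\Delta_{\mathcal V}\eta_R(x)\leq \frac{C}{R}\,\eta_R^{s-1}(x)\,\mathbf{1}_{D_R}(x),
\]
where $D_R=\{R-j\leq d\leq 2R+j\}$ and $|\Delta_{\mathcal V}\eta_R|\leq C/R$ follows from $|\eta'|\leq C$, $l_e\leq j$ and \eqref{s3:1}(v). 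After multiplying by $u\geq 0$, the power of the cutoff is evaluated at $x$ itself. Young's inequality then yields $\frac1\sigma\int Vu^\sigma\eta_R^s\,d\mu_{\mathcal V}+CR^{-\sigma'}\int_{D_R}V^{-\frac{1}{\sigma-1}}\eta_R^{s-\sigma'}\,d\mu_{\mathcal V}$, which absorbs for $s>\sigma'$. This is where nonnegativity of $u$ is genuinely used, and it is why Theorem~\ref{s3:T2} needs the everywhere-positive $\Psi$. In Step 4 you can then drop the $\varphi_R$ weight and use the crude bound $|\Delta_{\mathcal V}\varphi_R|\leq C/R$ on $D_R$.

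\textbf{Two smaller points.} First, no single annulus $E_{R'}$ contains $D_R$, so cover it by $E_{R/2}\cup E_R\cup E_{3R/2}$, as the paper does. Second, the step you flagged as hardest, bounding $\tilde d''$ near $q_e$, is immediate from the explicit mollification and $r>0$. The genuinely discrete difficulty is the vertex one above.
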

	
		As an immediate consequence of Theorem \ref{s3:T1}, setting $V \equiv 1$ on $\mathcal{G}$ yields the following corollary.
	\begin{corollary}
		Let $\mathcal{G}$ be a metric graph satisfying \eqref{s3:1} and $\sigma>1$. Assume that there exists a constant $C>0$ such that for all $R\geq R_0=\max\{2j,1\}$,
		\begin{equation*}
			\mu_\mathcal{V}(E_R)\leq CR^{\frac{\sigma}{\sigma-1}},\quad\mu_\mathcal{E}(E_R)\leq CR^{\frac{\sigma}{\sigma-1}},
		\end{equation*}
		where $\mu_\mathcal{V}$ and $\mu_\mathcal{E}$ are two measures given by Definition \ref{s2:D3}, and $E_R$ is defined in \eqref{s3:3}. If $u$ is a nonnegative solution of 
		$$\Delta_\mathcal{G} u+u^{\sigma}\leq0, \quad\text{in}\quad\mathcal{G},$$
		then $u\equiv0$  on $\mathcal{G}$.
	\end{corollary}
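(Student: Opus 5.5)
We prove Theorem \ref{s3:T1} by the test function method of Mitidieri--Pohozaev type, adapted to the integrating factor $\Delta_{\mathcal{G}}$. First we build the modified distance $\tilde d$: on each edge containing a singular point $q_e\in\mathcal{V}_0$, and near each vertex, we mollify $d(\cdot,x_0)$ so that $\tilde d\in\mathcal{D}(\mathcal{G})$ and $|\tilde d-d|\le C$. We require $|\tilde d_e'|\le 1$, $|\tilde d_e''|\le C$ (using $r=\inf l_e>0$), and one-sided derivatives of $\tilde d$ vanishing at every $x\in\mathcal{V}$. Then $[\mathcal{K}(\phi)](x)=0$ for every $\phi=\eta(\tilde d/R)$. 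Fix $\eta\in C^\infty([0,\infty))$ with $\eta=1$ on $[0,1]$, $\eta=0$ on $[2,\infty)$, $0\le\eta\le1$, and set $\varphi_R=\eta(\tilde d/R)^s$ with $s$ large. By (iv) $\varphi_R$ has compact support, and $\nabla$-type quantities of $\varphi_R$ live in (a slight enlargement of) $E_R$, since $R\ge 2j$ absorbs edge lengths.

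Next comes the integration by parts. On edges, integrate twice; the boundary terms $\sum_e[\varphi u'-\varphi' u]$ at the vertices vanish, because $\mathcal{K}(u)=0$ and $\varphi_R$ is constant along edges to first order at vertices (its one-sided derivatives vanish). This gives $\int u''\varphi\,d\mu_{\mathcal{E}}=\int u\varphi''\,d\mu_{\mathcal{E}}$. On vertices use \eqref{s2:18}. Testing the two inequalities against $\varphi_R\ge0$ and adding,
\[
\int_{\mathcal{G}}V u^\sigma\varphi_R\,d\mu_{\mathcal{G}}\le -\int_{\mathcal{G}}u\,\Delta_{\mathcal{V}}\varphi_R\,d\mu_{\mathcal{V}}-\int_{\mathcal{G}}u\,\varphi_R''\,d\mu_{\mathcal{E}}.
\]
Now estimate the test function. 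On edges, $|\varphi_R''|\le CR^{-2}\varphi_R^{1-2/s}+CR^{-1}|\tilde d''|\varphi_R^{1-1/s}$. Here lies the obstacle: $\tilde d''$ is only $O(1)$, not $O(1/R)$, near the mollified cut points. We would handle this by making the mollification width scale with the edge, so that $\int|\tilde d''|$ over an edge is bounded, and then compare with $R^{-1}$ times the measure of the annulus. Alternatively, we replace $u\varphi''$ by the estimate $-\int u\varphi''\le \int |u'||\varphi'|$, using $u''\le0$ (so $u$ is concave on edges). The cleanest route is to note that $\varphi_R''$ has a sign issue only where $\tilde d''<0$, i.e.\ near peaks. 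There $\eta'\le0$ gives $-u\,\eta'(\tilde d/R)\tilde d''/R\le0$ when $\tilde d''\le0$, so those terms can be dropped since $u\ge0$. This uses nonnegativity essentially, and it is where the mollification should be chosen concave near $q_e$ (so $\tilde d''\le0$ there). At vertices, $\tilde d$ equals $d$ and satisfies \eqref{s1:4} with $\alpha=0$ by (v),(vi). Using a Taylor expansion of $\eta^s$ and convexity-type bounds, we get $-\Delta_{\mathcal{V}}\varphi_R(x)\le CR^{-1}\varphi_R^{1-1/s}(x)\mathbf 1_{E_R'}$, with the rate $R^{-1}$ matching the exponent $\sigma/(\sigma-1)$ in \eqref{s3:2}.

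Finally, apply Young's inequality:
\[
CR^{-1}u\varphi^{1-1/s}\le \tfrac12 Vu^\sigma\varphi+C R^{-\sigma'}V^{-\frac{1}{\sigma-1}},\qquad \sigma'=\tfrac{\sigma}{\sigma-1},
\]
choosing $s\ge\sigma'$. This yields $\int V u^\sigma\varphi_R\,d\mu_{\mathcal{G}}\le CR^{-\sigma'}\big(\sum_{E_R\cap\mathcal{V}}\mu V^{-1/(\sigma-1)}+\int_{E_R}V^{-1/(\sigma-1)}\big)\le C$ by \eqref{s3:2}. Hence $Vu^\sigma\in L^1(\mathcal{G})$. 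Then we repeat the argument with Hölder instead of Young, restricted to the annulus:
\[
\int Vu^\sigma\varphi_R\le C\Big(\int_{E_R}Vu^\sigma\varphi_R\Big)^{1/\sigma}.
\]
The right-hand side tends to $0$ as $R\to\infty$ by the integrability just obtained. Letting $R\to\infty$ gives $\int Vu^\sigma\,d\mu_{\mathcal{G}}=0$, so $u\equiv0$ since $V>0$ and $u$ is continuous.
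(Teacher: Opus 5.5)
Your argument is the paper's proof of Theorem \ref{s3:T1} redone for general $V$, and it goes through. The shared ingredients are a modified distance equal to $d$ on $\mathcal{V}$ with vanishing one-sided derivatives there, the cutoff $\phi(\tilde d/R)^s$, cancellation of the vertex boundary terms using $\mathcal{K}(u)=0$ and the vanishing derivatives of the cutoff, $C/R$ bounds, and then Young followed by H\"older. The paper obtains the corollary in one line by setting $V\equiv 1$ in Theorem \ref{s3:T1}: the two quantities in \eqref{s3:2} then become exactly $\mu_{\mathcal V}(E_R)$ and $\mu_{\mathcal E}(E_R)$. You should state this explicitly, because your write-up never returns to the corollary's hypotheses.

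Several statements along the way need correcting, though none breaks the proof.

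\textbf{The edge ``obstacle'' is not one.} $|\tilde d_e''|\leq C$ already gives $|\varphi_R''|\leq C/R$ on the annulus, which is the paper's \eqref{s4:1}. This is exactly the rate required, as at the vertices: Young's inequality turns $R^{-1}$ into $R^{-\sigma/(\sigma-1)}$, which the volume bound $R^{\sigma/(\sigma-1)}$ cancels. None of your workarounds is needed.

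\textbf{The sign-dropping workaround would not remove the $O(1)$ curvature anyway.} The term $-u\,\eta'(\tilde d/R)\tilde d''/R$ has the bad sign where $\tilde d''>0$, not where $\tilde d''<0$ as you first write. On any edge along which $d$ is linear with slope $\pm1$, $\tilde d$ must go from slope $0$ at an endpoint to average slope $\pm1$ over a length $l_e\leq j$. Hence $\tilde d''\geq 1/j$ somewhere on that edge, and those terms are simply bounded by $C/R$ and absorbed.

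\textbf{The requirement $|\tilde d_e'|\leq 1$ cannot be met.} For the same reason, it is incompatible with $\tilde d=d$ on $\mathcal{V}$ together with vanishing one-sided derivatives. Only $|\tilde d_e'|\leq C$ is needed.

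\textbf{The $(\varphi')^2$ part of $\varphi_R''$ needs care.} It carries $\varphi_R^{1-2/s}$, not $\varphi_R^{1-1/s}$. Either take $s\geq 2\sigma/(\sigma-1)$, or discard that term by sign using $u\geq 0$, as the paper does in \eqref{s4:7}.

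\textbf{The support of the derivatives is larger than $E_R$.} They live on $\{R-j\leq d\leq 2R+j\}$. Cover this set by $E_{R/2}\cup E_R\cup E_{3R/2}$, as in \eqref{s4:10}, before invoking the volume hypothesis.
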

	
For given constant $\delta>0$ and fixed $R\geq R_0$, we always write $\alpha=\delta/R$ and
\begin{equation}\label{s3:16}
	X_\alpha:=L^1_{e^{-\alpha d(x,x_0)}}(\mathcal{G}),
\end{equation}
where $L^1_{e^{-\alpha d(x,x_0)}}(\mathcal{G})$ is a weighted Lebesgue space defined in \eqref{s2:13}. The proof of Theorem \ref{s3:T1} relies on a priori estimates obtained via appropriate compactly supported test functions. Nevertheless, when extending our analysis to sign-changing solutions, the test functions we employ are supported on the entire graph and exhibit sufficiently rapid decay at infinity. This forces us to impose additional constraints, such as a more stringent weighted volume growth condition on  the potential $V$, as well as the requirement that the solution $u$ lies in a suitable weighted space $X_\alpha$. Collectively, these form the exact content of our next theorem.

\begin{theorem}\label{s3:T2}
	Let $\mathcal{G}$ be a metric graph satisfying \eqref{s3:1}, and let $\sigma>1$. Suppose that the potential $V:\mathcal{G}\ra\mathbb{R}$ is a positive function satisfying 	
			\begin{equation}\label{s3:4}
			\sum_{x\in\mathcal{V}\cap B_{R}^c}\mu_{\mathcal{V}}(x) V^{-\frac{1}{\sigma-1}}(x)e^{-\alpha d(x,x_0)}\leq CR^{\frac{\sigma}{\sigma-1}},\quad \sum_{e\in\mathcal{E}\cap B_{R}^c}\int_{0}^{l_e} V_e^{-\frac{1}{\sigma-1}}(x) e^{-\alpha d(x,x_0)}dx\leq CR^{\frac{\sigma}{\sigma-1}}, \quad\forall\ R\geq R_0,
		\end{equation}
		for some $\alpha=\delta/R>0$ and $C>0$. If $u$ is a solution of \eqref{s1:1} and $u\in X_\alpha$, then $u\equiv0$ on $\mathcal{G}$.
\end{theorem}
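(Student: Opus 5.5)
The approach is the exponentially-weighted test function method of \cite{M-P-S2}, adapted to the two measures. Fix $R\ge R_0$ and $\alpha=\delta/R$. Take the modified distance $\tilde d$ from Subsection 4.1, which is $C^2$ on the interior of each edge, has vanishing one-sided derivatives at every vertex of $\mathcal{V}$, and has vanishing derivative at the midpoints of edges carrying a singular point $q_e\in\mathcal{V}_0$. With it, set
\[
\varphi(x)=e^{-\alpha \tilde d(x,x_0)}\,\eta_R(x),\qquad \text{or simply}\qquad \varphi=e^{-\alpha\tilde d},
\]
where $\eta_R$ is an optional cutoff equal to $1$ on $B_R$, inserted so that the partial-sum limits can be taken cleanly. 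Because the normal derivatives of $\varphi$ at the vertices vanish, $\varphi$ satisfies the Kirchhoff condition $\mathcal{K}(\varphi)=0$. Since $\tilde d$ differs from $d$ by at most a constant depending only on $j$ (assumption (vi)), $\varphi$ is comparable to $e^{-\alpha d}$.

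\textbf{Step 1: integral identity.} Multiply the vertex inequality by $\mu_\mathcal{V}\varphi$ and the edge inequality by $\varphi$, then sum. On the vertex side, apply the symmetry \eqref{s2:18}, justified by truncation together with $u\in X_\alpha$ and $\sum_{y\sim x}\omega\le C\mu_\mathcal{V}$. On the edges, integrate by parts twice (Lemmas \ref{s2:L1}, \ref{s4:L4}); the vertex boundary terms cancel by $\mathcal{K}(u)=0$ and $\mathcal{K}(\varphi)=0$. The result is
\[
\int_\mathcal{G} V|u|^\sigma\varphi\,d\mu_\mathcal{G}\le -\int_\mathcal{G} u\,\Delta_\mathcal{V}\varphi\,d\mu_\mathcal{V}-\int_\mathcal{G} u\,\varphi''\,d\mu_\mathcal{E}\le \int_\mathcal{G}|u|\bigl(|\Delta_\mathcal{V}\varphi|d\mu_\mathcal{V}+|\varphi''|d\mu_\mathcal{E}\bigr).
\]
The sign of $u$ plays no role here, because only $|u|$ appears on the right. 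This is the a priori estimate (Lemma \ref{s4:L5} type).

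\textbf{Step 2: derivative bounds.} Show that $|\Delta_\mathcal{V}\varphi|\le C\alpha\varphi$ at vertices and $|\varphi''|\le C(\alpha+\alpha^2)\varphi$ on edges. At a vertex, use $|e^{-\alpha a}-e^{-\alpha b}|\le \alpha|a-b|e^{-\alpha\min(a,b)}$ and $|\tilde d(y)-\tilde d(x)|\le C j$, so that $e^{-\alpha\min}\le e^{C\alpha j}\varphi(x)$; then (v) bounds $\sum\omega\le C\mu_\mathcal{V}$. On edges, $\varphi''=(\alpha^2\tilde d'^2-\alpha\tilde d'')\varphi$, and the bounds on $|\tilde d'|$ and $|\tilde d''|$ come from the mollification scale, which is comparable to $r>0$ (Lemmas \ref{s4:L2}, \ref{s4:L3}). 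This is the main obstacle. $\tilde d''$ is only $O(1/r)$, not small, so the bound is $C\alpha$ only if $\tilde d''$ is paired with the factor $\alpha$, which it is. I would also need $\varphi''$, hence $\tilde d''$, to be supported near vertices and singular points with bounded mass, and I would try to confirm this from the explicit mollifier.

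\textbf{Step 3: closing the estimate.} Write $\Phi=\alpha\varphi$ and apply Young's inequality: $|u|\Phi\le \frac1{2}V|u|^\sigma\varphi+C V^{-\frac{1}{\sigma-1}}\alpha^{\frac{\sigma}{\sigma-1}}\varphi$. Absorb the first term into the left-hand side. On $B_R$, $\varphi\approx1$; outside, use \eqref{s3:4}. Together these give
\[
\int_{B_R} V|u|^\sigma d\mu_\mathcal{G}\le C\alpha^{\frac{\sigma}{\sigma-1}}\Bigl(\int_{B_R}V^{-\frac1{\sigma-1}}+CR^{\frac{\sigma}{\sigma-1}}\Bigr)\le C\delta^{\frac{\sigma}{\sigma-1}}+\text{(inner term)}.
\]
The inner ball term is handled as in Theorem \ref{s3:T1}: summing \eqref{s3:4} dyadically, or equivalently noting that \eqref{s3:4} at radius $R/2$ already covers $B_R\setminus B_{R/2}$, bounds the shells near $x_0$ by $CR^{\sigma/(\sigma-1)}$, which is harmless after multiplication by $\alpha^{\sigma/(\sigma-1)}$.

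\textbf{Step 4: conclusion.} This gives $\int_\mathcal{G}V|u|^\sigma e^{-\alpha d}\,d\mu_\mathcal{G}\le C\delta^{\sigma/(\sigma-1)}$ uniformly in $R$. Because $\delta$ can be taken small with constants independent of $\delta\le1$, letting $\delta\to0$ (equivalently $R\to\infty$ at fixed $\delta$, with $e^{-\alpha d}\to1$ locally by Fatou) yields $\int V|u|^\sigma=0$. Since $V>0$, $u\equiv0$ on $\mathcal{V}$ and on every edge.

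Should the small-$\delta$ limit not be available, the fallback is to use a Hölder rather than a Young split. This gives $I\le C I^{1/\sigma}\delta$, hence $I\le C\delta^{\sigma/(\sigma-1)}$, which is the same conclusion.
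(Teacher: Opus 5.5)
Your Steps 1--3 are fine. With a test function that decays from the origin, they reproduce the paper's a priori estimate (Lemma \ref{s4:L5}). The gap is in Step 4. The bound $\int_{\mathcal{G}}V|u|^\sigma e^{-\alpha d}\,d\mu_{\mathcal{G}}\le C\delta^{\sigma/(\sigma-1)}$ does not go to zero as $R\to\infty$ with $\delta$ fixed. It only gives $\int_{\mathcal{G}}V|u|^\sigma\,d\mu_{\mathcal{G}}\le C\delta^{\sigma/(\sigma-1)}<\infty$, so the ``equivalently'' is false. Nor can you let $\delta\to0$ with uniform constants. Hypothesis \eqref{s3:4} is assumed for one $\delta$ and one $C$. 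If you transfer it to $\delta'<\delta$ by your dyadic argument, the constant grows like $(\delta/\delta')^{\sigma/(\sigma-1)}$, which exactly cancels the gain. This is intrinsic: $\alpha^{\sigma/(\sigma-1)}\int V^{-1/(\sigma-1)}e^{-\alpha d}$ is scale invariant. For $V\equiv1$ and volume growth $R^{\sigma/(\sigma-1)}$ it is $\asymp1$ for all small $\alpha$. Your H\"older fallback gives the same non-vanishing bound $I\le (C\delta)^{\sigma/(\sigma-1)}$, so it does not rescue the argument.

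What is missing is a second pass that turns the right-hand side into a tail of the integral you have just shown to be finite. The paper takes $\Psi=\psi\bigl((\tilde d-j)/R\bigr)$ with $\psi\equiv1$ on $[-j/R,1]$ and $\psi(r)=e^{-\delta r}$ only for $r\ge2$. Then $\Delta_{\mathcal{V}}\Psi$ vanishes on $B_R$ and $\Psi''$ vanishes on $\tilde B_{R+j}\supset B_R$ (Lemma \ref{s4:L3}). H\"older plus \eqref{s3:4}, applied to the vertex and edge parts separately, gives $\int_{\tilde B_{R+j}}V|u|^\sigma\,d\mu_{\mathcal{G}}\le C\bigl(\int_{B_R^c}V|u|^\sigma\bigr)^{1/\sigma}$, which tends to $0$.

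With your pure exponential, the derivative terms live on the whole ball, so you would have to split the H\"older estimate at radius $\epsilon R$. The inner part is at most $C\delta\epsilon\bigl(\int_{\mathcal{G}}V|u|^\sigma\bigr)^{1/\sigma}$, using $\int_{B_{\epsilon R}}V^{-1/(\sigma-1)}\lesssim(\epsilon R)^{\sigma/(\sigma-1)}$. The outer part is at most $C\delta$ times the $1/\sigma$ power of the tail beyond $\epsilon R$. Letting $R\to\infty$ and then $\epsilon\to0$ gives $u\equiv0$.

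The paper's choice also uses \eqref{s3:4} only on $B_R^c$. Your inner-ball step additionally needs $V^{-1/(\sigma-1)}$ to be integrable on the finite ball $B_{R_0}$, which the hypotheses do not state.
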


We emphasize that the distance used in Theorems \ref{s3:T1} and \ref{s3:T2} is the original distance function $d$, not the modified one. In other words, the modified distance $\tilde{d}$ is only utilized in the proofs of the theorems. This is a desirable feature for results on metric graphs, as our conclusions only depend on the natural structure of the metric graph, not on the choice of the modified distance function.

		\section{Proofs of the main results for the elliptic equation}
		
			In this section, we first provide a modified distance function and derive a priori estimates for solutions to the equation \eqref{s1:1} by constructing two test functions of different forms. We finally prove Theorems \ref{s3:T1} and \ref{s3:T2} in sequence.
		
		\subsection{Modified distance functions}
		Let $\eta:[0,1]\to[0,1]$ be a $C^3$-function satisfying
		\begin{equation}\label{s4:4}
			\begin{cases}
				\eta(0)=0,\ \eta(1)=1;\\
				\eta^\prime_+(0)=\eta^\prime_-(1)=0,\ \eta^{\prime\prime}_+(0)=\eta^{\prime\prime}_-(1)=0;\\
					\eta^\prime(x)\geq0, \text{ for all}\ x\in (0, 1);\\
				\text{there exists}\ C > 0\ \text{such that}\ |\eta^\prime(x)|\leq C\ \text{and}\  |\eta^{\prime\prime}(x)|\leq C, \text{ for all}\ x\in (0, 1).
			\end{cases}
		\end{equation}
		We introduce  a modified distance function $\tilde{d}(x,x_0):\mathcal{G}\to\mathbb{R}$ by
		\begin{equation}\label{s4:36}
			\tilde{d}=\bigoplus_{e\in\mathcal{E}}\tilde{d}_e,\quad\tilde{d}_e(x,x_0):=d(\tilde{x}_e(x),x_0),
		\end{equation}
		where $\tilde{x}_e:\overline{I}_e\to\overline{I}_e$ denotes the coordinate transformation. For each edge $e\in\mathcal{E}$, we define $\tilde{x}_e$ as follows:
		\begin{enumerate}[(i)]
			\item if $e \cap \mathcal{V}_0 = \emptyset$, 
			\begin{equation}\label{s4:33}
				\tilde{x}_e(x)=l_e\eta\le(\frac{x}{l_e}\ri), \quad x\in\le[0,l_e\ri];
			\end{equation}
			\item if $e \cap \mathcal{V}_0=\{q_e\}$, 
			\begin{equation}\label{s4:34}
				\tilde{x}_e(x)=
				\begin{cases}
					q_e\eta\left(\frac{2x}{l_e}\right), & x\in\le[0,\frac{l_e}{2}\ri],\\[1.5ex]
					(l_e-q_e)\eta\left(\frac{2x-l_e}{l_e}\right)+q_e, & x\in\le[\frac{l_e}{2},l_e\ri].\\[1.5ex]
				\end{cases}
			\end{equation}
		\end{enumerate}
		Hence, $\tilde{d}(w,x_0)=d(w,x_0)$ for all $w\in \mathcal{V}$, and $\tilde{d}_e(l_e/2,x_0)=d_e(q_e,x_0)$ for each $q_e\in \mathcal{V}_0$. This is a regularization for edges with sigularity, which are repositioned to the middle point. Moreover, it is obvious that
		\begin{equation}\label{s4:22}
			|d(x,x_0)-\tilde{d}(x,x_0)|\leq j,\quad\forall\ x\in\mathcal{G}.
		\end{equation}

		The advantage of the modified coordinate transformation $\tilde{x}_e$ is that it smooths the
	distance function at the singular point $q_e$ where the original distance function is not differentiable. Consequently, the modified distance function $\tilde{d}(x,x_0)$ becomes twice continuously differentiable across the entire edge $e$ without singularities. Importantly, the modified distance function coincides with the distance function at all original vertices in $\mathcal{V}$, but its one-sided derivatives at these vertices are zero, rather than the original $\pm1$. Hence, the modified distance function $\tilde{d}\in C^2(\mathcal{G})$. These are precisely the properties described below.

			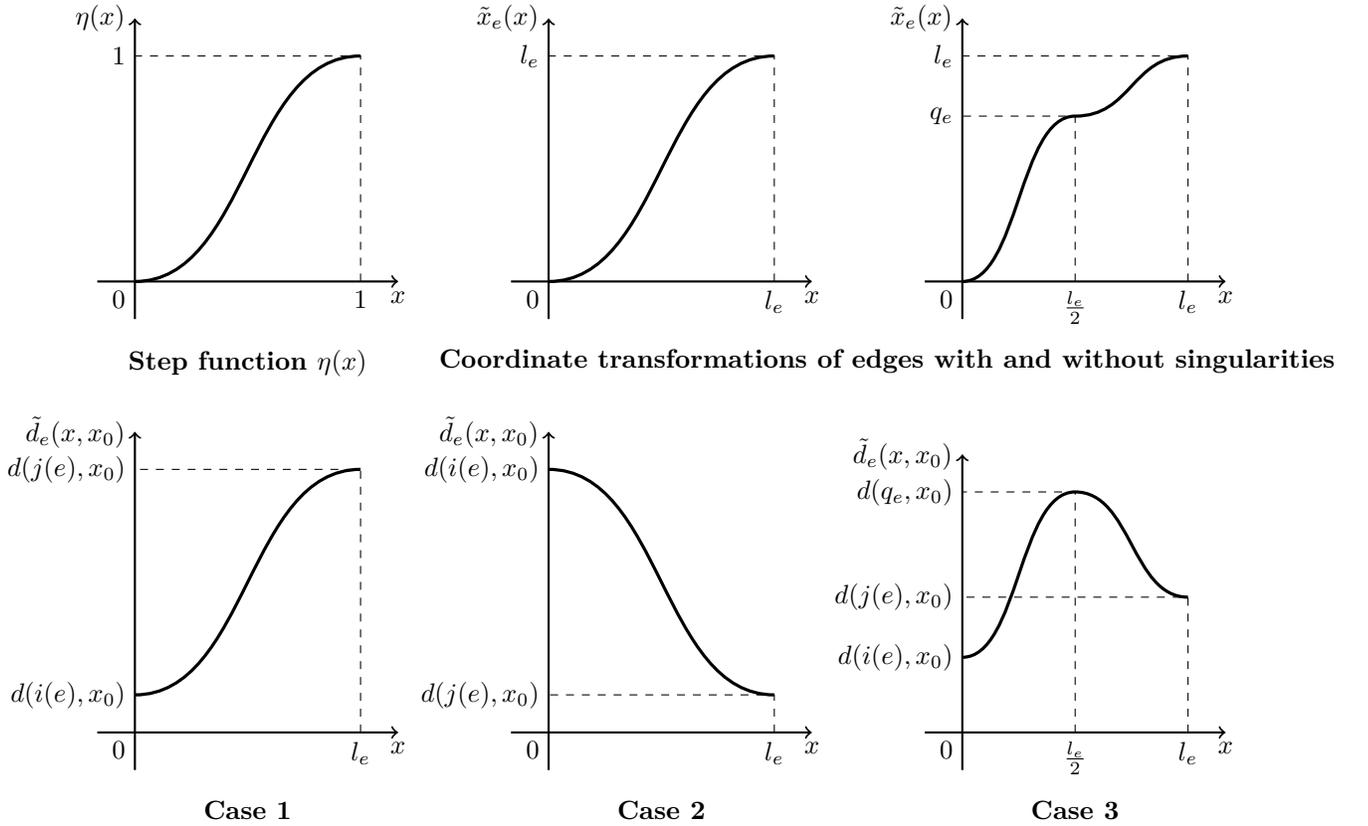
\begin{figure}[htbp]
				\centering
				\begin{tikzpicture}
					
					\def\rowOneY{0cm}
					
					\begin{scope}[xshift=0cm, yshift=\rowOneY, local bounding box=etaPlot]
						\def\scaleFactor{3.0}
						
						\draw[->, thick] (-0.5,0) -- (3.5,0) node[below] {$x$};
						\draw[->, thick] (0,-0.5) -- (0,3.5) node[left] {$\eta(x)$};
						
						\draw[very thick] (0,0) .. controls (0.5*\scaleFactor, 0) and (0.5*\scaleFactor, \scaleFactor) .. (\scaleFactor, \scaleFactor);
						
						\draw[dashed] (\scaleFactor,0) node[below] {$1$} -- (\scaleFactor,\scaleFactor) -- (0,\scaleFactor) node[left] {$1$};
						\node[below left] at (0,0) {$0$};
						\node[below=8pt, font=\bfseries] at (1.5, -0.5) {Step function $\eta(x)$};
					\end{scope}
					
					\begin{scope}[xshift=5.5cm, yshift=\rowOneY, local bounding box=xTildeOne]
						\def\lE{3.0}
						
						\draw[->, thick] (-0.5,0) -- (\lE+0.5,0) node[below] {$x$};
						\draw[->, thick] (0,-0.5) -- (0,\lE+0.5) node[left] {$\tilde{x}_e(x)$};
						
						\draw[very thick] (0,0) .. controls (0.5*\lE, 0) and (0.5*\lE, \lE) .. (\lE, \lE);
						
						\draw[dashed] (\lE,0) node[below] {$l_e$} -- (\lE,\lE) -- (0,\lE) node[left] {$l_e$};
						\node[below left] at (0,0) {$0$};
						\node[below=8pt, font=\bfseries] at (4.5, -0.5) {Coordinate transformations of edges with and without singularities};
					\end{scope}
					
					\begin{scope}[xshift=11cm, yshift=\rowOneY, local bounding box=xTildeTwo]
						\def\lE{3.0}
						\def\halfLE{1.5}
						\def\qE{2.2} 
						
						\draw[->, thick] (-0.5,0) -- (\lE+0.5,0) node[below] {$x$};
						\draw[->, thick] (0,-0.5) -- (0,\lE+0.5) node[left] {$\tilde{x}_e(x)$};
						
						\draw[very thick] 
						(0,0) .. controls (0.5*\halfLE, 0) and (0.5*\halfLE, \qE) .. (\halfLE, \qE)
						.. controls (\halfLE + 0.5*\halfLE, \qE) and (\lE - 0.5*\halfLE, \lE) .. (\lE, \lE);
						
						\draw[dashed] (\halfLE,0) node[below] {$\frac{l_e}{2}$} -- (\halfLE,\qE) -- (0,\qE) node[left] {$q_e$};
						\draw[dashed] (\lE,0) node[below] {$l_e$} -- (\lE,\lE) -- (0,\lE) node[left] {$l_e$};
						
						\node[below left] at (0,0) {$0$};
					\end{scope}

					\def\rowTwoY{-6cm} 
					
					\begin{scope}[xshift=0cm, yshift=\rowTwoY, local bounding box=dCaseOne]
						\def\lE{3.0}
						\def\dU{0.5}
						\def\dV{3.5}
						
						\draw[->, thick] (-0.5,0) -- (\lE+0.5,0) node[below] {$x$};
						\draw[->, thick] (0,-0.5) -- (0,\dV+0.5) node[left] {$\tilde{d}_e(x,x_0)$};
						
						\draw[very thick] (0,\dU) .. controls (0.5*\lE, \dU) and (0.5*\lE, \dV) .. (\lE, \dV);
						
						\draw[dashed] (0,\dU) node[left] {$d(i(e),x_0)$} -- (0,\dU); 
						\draw[dashed] (\lE,0) node[below] {$l_e$} -- (\lE,\dV) -- (0,\dV) node[left] {$d(j(e),x_0)$};
						
						\node[below left] at (0,0) {$0$};
						\node[below=8pt, font=\bfseries] at (1.5, -0.5) {Case 1};
					\end{scope}
					
					\begin{scope}[xshift=5.5cm, yshift=\rowTwoY, local bounding box=dCaseTwo]
						\def\lE{3.0}
						\def\dU{3.5}
						\def\dV{0.5}
						
						\draw[->, thick] (-0.5,0) -- (\lE+0.5,0) node[below] {$x$};
						\draw[->, thick] (0,-0.5) -- (0,\dU+0.5) node[left] {$\tilde{d}_e(x,x_0)$};
						
						\draw[very thick] (0,\dU) .. controls (0.5*\lE, \dU) and (0.5*\lE, \dV) .. (\lE, \dV);
						
						\draw[dashed] (\lE,0) node[below] {$l_e$} -- (\lE,\dV) -- (0,\dV) node[left] {$d(j(e),x_0)$};
						\node[left] at (0,\dU) {$d(i(e),x_0)$};
						\node[below left] at (0,0) {$0$};
						\node[below=8pt, font=\bfseries] at (1.5, -0.5) {Case 2};
					\end{scope}
					
					\begin{scope}[xshift=11cm, yshift=\rowTwoY, local bounding box=dCaseThree]
						\def\lE{3.0}
						\def\halfLE{1.5}
						\def\dU{1.0}
						\def\dV{1.8}
						\def\maxD{3.2} 
						
						\draw[->, thick] (-0.5,0) -- (\lE+0.5,0) node[below] {$x$};
						\draw[->, thick] (0,-0.5) -- (0,\maxD+0.5) node[left] {$\tilde{d}_e(x,x_0)$};
						
						\draw[very thick] 
						(0,\dU) .. controls (0.5*\halfLE, \dU) and (0.5*\halfLE, \maxD) .. (\halfLE, \maxD)
						.. controls (\halfLE + 0.5*\halfLE, \maxD) and (\lE - 0.5*\halfLE, \dV) .. (\lE, \dV);
						
						\draw[dashed] (\halfLE,0) node[below] {$\frac{l_e}{2}$} -- (\halfLE,\maxD) -- (0,\maxD) node[left] {$d(q_e,x_0)$};
						\draw[dashed] (\lE,0) node[below] {$l_e$} -- (\lE,\dV) -- (0,\dV) node[left] {$d(j(e),x_0)$};
						\node[left] at (0,\dU) {$d(i(e),x_0)$};
						\node[below left] at (0,0) {$0$};
						\node[below=8pt, font=\bfseries] at (1.5, -0.5) {Case 3};
					\end{scope}
					
				\end{tikzpicture}
				\caption{Illustration of step function, coordinate transformations and the modified distance function $\tilde{d}_e(x,x_0)$ for three cases previously shown by Fig~\ref{fig:distanceFunctions}.}
				\label{fig:smoothedDistance}
			\end{figure}
			
\newpage	
	\begin{proposition}\label{s4:P1}
		For any edge $e\in\mathcal{E}$, the following conclusions hold:
		\begin{enumerate}[(i)]
			\item For any $w\in\mathcal{V}$ with $w=i(e)$ or $w=j(e)$, the one-sided derivatives of $\tilde{d}_e$ at $w$ satisfy $\tilde{d}_e^\prime(w,x_0)=\tilde{d}_e^{\prime\prime}(w,x_0)=0$. For any edge $e$ containing a singular point, $\tilde{d}_e^\prime(s_e,x_0)=\tilde{d}_e^{\prime\prime}(s_e,x_0)=0$, where $s_e=\pi_e^{-1}(l_e/2)$ denotes the middle point of $e$. Moreover, $\tilde{d}\in C^2(\mathcal{G})$.
			\item There exists a constant $C>0$ such that for any $x\in(0,l_e)\setminus \{l_e/2\}$,
			\begin{equation}\label{s4:35}
				\le|\tilde{d}_e^\prime(x,x_0)\ri|\leq C,\quad \le|\tilde{d}_e^{\prime\prime}(x,x_0)\ri|\leq C.
			\end{equation}
		\end{enumerate}
	\end{proposition}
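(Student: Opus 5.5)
The approach is a direct chain-rule computation on each edge, split according to the three cases of Figure~\ref{fig:distanceFunctions}. On every edge the original distance restricted to a subinterval is affine in the coordinate. In Cases 1 and 2 it is $d_e(y,x_0)=d(i(e),x_0)\pm y$ on all of $\overline{I}_e$. In Case 3 it is $d(i(e),x_0)+y$ on $[0,q_e]$ and $d(j(e),x_0)+l_e-y$ on $[q_e,l_e]$. Composing with $\tilde{x}_e$ gives $\tilde{d}_e(x,x_0)=d_e(\tilde{x}_e(x),x_0)$.

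\textbf{Cases 1 and 2.} We get
$$\tilde{d}_e'(x,x_0)=\pm\,\eta'\!\left(\tfrac{x}{l_e}\right),\qquad \tilde{d}_e''(x,x_0)=\pm\,\tfrac{1}{l_e}\,\eta''\!\left(\tfrac{x}{l_e}\right).$$

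\textbf{Case 3.} On $[0,l_e/2]$ the map $\tilde{x}_e$ takes values in $[0,q_e]$, because $\eta$ is nondecreasing with $\eta(0)=0$ and $\eta(1)=1$. Hence
$$\tilde{d}_e'(x,x_0)=\tfrac{2q_e}{l_e}\,\eta'\!\left(\tfrac{2x}{l_e}\right),\qquad \tilde{d}_e''(x,x_0)=\tfrac{4q_e}{l_e^2}\,\eta''\!\left(\tfrac{2x}{l_e}\right).$$
Similarly, on $[l_e/2,l_e]$ the map $\tilde{x}_e$ takes values in $[q_e,l_e]$, so
$$\tilde{d}_e'(x,x_0)=-\tfrac{2(l_e-q_e)}{l_e}\,\eta'\!\left(\tfrac{2x-l_e}{l_e}\right),\qquad \tilde{d}_e''(x,x_0)=-\tfrac{4(l_e-q_e)}{l_e^2}\,\eta''\!\left(\tfrac{2x-l_e}{l_e}\right).$$

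\textbf{Part (i).} The boundary conditions in \eqref{s4:4}, namely $\eta'_+(0)=\eta'_-(1)=0$ and $\eta''_+(0)=\eta''_-(1)=0$, immediately give vanishing one-sided first and second derivatives at $x=0$ and $x=l_e$. In Case 3 they also give vanishing derivatives from both sides at $x=l_e/2$. The left limits come from $\eta'(1)=\eta''(1)=0$ and the right limits from $\eta'(0)=\eta''(0)=0$. So $\tilde{d}_e$ is $C^2$ across $s_e$, with $\tilde{d}_e'(s_e)=\tilde{d}_e''(s_e)=0$.

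For $\tilde{d}\in C^2(\mathcal{G})$, we check the requirement in the paper's definition: each $\tilde{d}_e\in C^2(\overline{I}_e)$, and $\tilde{d}$, $\tilde{d}'$, $\tilde{d}''$ are continuous on $\mathcal{G}$, i.e. they agree across edges at vertices.
\begin{itemize}
\item $\tilde{d}$ itself agrees at vertices because $\tilde{x}_e$ fixes the endpoints, so $\tilde{d}(w,x_0)=d(w,x_0)$.
\item The derivatives agree at vertices because all one-sided values there equal $0$. This is exactly why the orientation sign issue of $\pm1$ for the original $d$ disappears.
\item Interior $C^2$ regularity follows from $\eta\in C^3$ together with the affine form of $d_e$ on each piece.
\end{itemize}

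\textbf{Part (ii).} From the formulas above,
$$|\tilde{d}_e'|\le 2\sup|\eta'|,$$
using $q_e,\,l_e-q_e\le l_e$. For the second derivative,
$$|\tilde{d}_e''|\le \tfrac{4}{l_e}\sup|\eta''|\le \tfrac{4}{r}\sup|\eta''|,$$
where hypothesis \eqref{s3:1}(vi) gives $l_e\ge r>0$. The constant is therefore uniform in $e$.

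\textbf{Main obstacle.} The only delicate point is Case 3, where we must ensure that $\tilde{x}_e$ maps each half-interval into the correct affine piece of $d_e$. This rests on the monotonicity of $\eta$, which guarantees the range of $\tilde{x}_e$ stays on the correct side of $q_e$.

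We also need the uniformity of the bound on $\tilde{d}_e''$ in $e$, which is where the lower bound $r$ on edge lengths enters. A degenerate $q_e\in\{0,l_e\}$ belongs to Cases 1 and 2, so it does not need separate treatment.
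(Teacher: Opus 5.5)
Your proof is correct. Part (ii) is the same chain-rule computation the paper does, including the uniform bound via $q_e/l_e\leq 1$ and $l_e\geq r$.

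Part (i) is where you take a different route. The paper does not write the explicit formulas there. Instead it uses that $d(\cdot,x_0)$ is $1$-Lipschitz along each edge to squeeze difference quotients: $|\tilde d_e(x)-\tilde d_e(0)|\leq|\tilde x_e(x)|$ and $|\tilde d_e'(x)|\leq|\tilde x_e'(x)|$. The vanishing of $\tilde d_e'$ and $\tilde d_e''$ at the endpoints and at $s_e$ is thus inherited from the vanishing of $\tilde x_e'$ and $\tilde x_e''$. The paper then concludes $\tilde d\in C^2(\mathcal{G})$ from ``$\tilde x_e\in C^2$ and $d$ piecewise linear.''

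Your version is more economical, since one set of formulas gives both (i) and (ii). It also makes the regularity claim transparent. Composing a $C^2$ map with the kinked $d_e$ is not $C^2$ in general. What makes $\tilde d_e$ twice continuously differentiable across $s_e$ is exactly your observation: each half is $C^3$ up to $s_e$, and the one-sided values of $\tilde d_e$, $\tilde d_e'$, $\tilde d_e''$ match there. The paper's final sentence passes over this point quickly. The Lipschitz squeeze has the mild advantage of not needing the affine form of $d_e$ for the vanishing statements, but that form is used anyway for (ii) and for the $C^2$ conclusion.

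One small remark: the range condition you flag as the main obstacle is automatic. By definition $\eta$ maps $[0,1]$ into $[0,1]$, so $\tilde x_e([0,l_e/2])\subseteq[0,q_e]$ and $\tilde x_e([l_e/2,l_e])\subseteq[q_e,l_e]$ without appealing to monotonicity.
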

	\begin{proof}
		(i) We first claim that $d(\cdot,x_0)\big|_{\overline{I}_e}$ is $1$-Lipschitz continuous. For any $y,z\in\overline{I}_e$ on the same edge $e$, we know that the distance $d(\cdot,x_0)$ coincides with the arclength, i.e., $d(y,z) = |y-z|$.
		By the triangle inequality,
		$$
		d(y,x_0) \leq d(z,x_0) + d(y,z),\qquad
		d(z,x_0) \leq d(y,x_0) + d(y,z).
		$$
		Thus
		$$
		\big|d(y,x_0)-d(z,x_0)\big| \leq d(y,z) = |y-z|,
		$$
		so we confirm the claim. 
		
		For any $w\in\mathcal{V}$ and $e\ni w$, we next calculate the first-order right derivative of $\tilde{d}_e$ at the endpoint $w=i(e)$. By definition,
		$$
		\tilde{d}_e'(w^+,x_0)
		= \lim_{x\to 0^+} \frac{\tilde{d}_e(x,x_0)-\tilde{d}_e(0,x_0)}{x}
		= \lim_{x\to 0^+} \frac{d\big(\tilde{x}_e(x),x_0\big)-d\big(\tilde{x}_e(0),x_0\big)}{x}.
		$$
		By 1-Lipschitz continuity, we have
		$$
		\big|d\big(\tilde{x}_e(x),x_0\big)-d\big(\tilde{x}_e(0),x_0\big)\big|
		\leq \big|\tilde{x}_e(x)-\tilde{x}_e(0)\big| = |\tilde{x}_e(x)|.
		$$
		Hence,
		$$
		\left| \frac{\tilde{d}_e(x,x_0)-\tilde{d}_e(0,x_0)}{x} \right|
		\leq \frac{|\tilde{x}_e(x)|}{x}.
		$$
		If $e\cap\mathcal{V}_0=\emptyset$, then it follows from \eqref{s4:4} and \eqref{s4:33} that
		$$
		\lim_{x\to0^+}\frac{\tilde{x}_e(x)}{x}=\lim_{x\to0^+}\frac{l_e\eta\le(\frac{x}{l_e}\ri)}{x}=0
		$$
		by $\eta(0)=0$ and $\eta'_+(0)=0$. If $e\cap\mathcal{V}_0=q_e$, then on $[0,l_e/2]$,
		$\tilde{x}_e(x)=q_e\eta(2x/l_e)$, and similarly
		$$
		\lim_{x\to0^+}\frac{\tilde{x}_e(x)}{x}=\lim_{x\to0^+}\frac{q_e\eta\le(\frac{2x}{l_e}\ri)}{x}=0.
		$$
		By the squeeze theorem, we have $\tilde{d}_e'(w^+,x_0) = 0$. For the second-order right derivative, 
		$$
		\tilde{d}_e''(w^+,x_0)
		= \lim_{x\to0^+} \frac{\tilde{d}_e'(x,x_0)-\tilde{d}_e'(0^+,x_0)}{x}
		= \lim_{x\to0^+} \frac{\tilde{d}_e'(x,x_0)}{x}.
		$$
		For small $x>0$ and small $h$, 1-Lipschitz continuity gives
		$$
		\big|\tilde{d}_e(x+h,x_0)-\tilde{d}_e(x,x_0)\big|
		\leq \big|\tilde{x}_e(x+h)-\tilde{x}_e(x)\big|.
		$$
		Dividing by $h$ and letting $h\to0$ yields $\big|\tilde{d}_e'(x,x_0)\big| \leq \big|\tilde{x}_e'(x,x_0)\big|$. Thus
		$$
		\left| \frac{\tilde{d}_e'(x,x_0)}{x} \right|
		\leq \frac{|\tilde{x}_e'(x,x_0)|}{x}.
		$$
		Since $\tilde{x}_e'(x)=\eta'(x/l_e)$ or $\tilde{x}_e'(x)=(2q_e/l_e)\eta'(2x/l_e)$ and $\eta''_+(0)=0$, we obtain $\tilde{d}_e''(w^+,x_0) = 0$. The argument is symmetric at the  endpoint $w=j(e)$, and we omit it. Hence, the modified distance function has zero first (second)-order derivative at both endpoints (in one-sided sence). 
	
	   Next, we show the derivatives of the segment point $s_e$. Noting that $\eta'_-(1)=0$ and $\eta'_+(0)=0$, we deduce from \eqref{s4:34} that
	   $$
	   \lim_{x\to s_e^\pm}\frac{\tilde{x}_e(x)-\tilde{x}_e(s_e)}{x-s_e}=0.
	   $$
	   This, together with 
	   $$
	   \tilde{d}_e'(s_e^\pm,x_0)
	   = \lim_{x\to s_e^\pm} \frac{\tilde{d}_e(x,x_0)-\tilde{d}_e(s_e,x_0)}{x-s_e},
	   $$
	   and
	   $$
	   \big|\tilde{d}_e(x,x_0)-\tilde{d}_e(s_e,x_0)\big|
	   \leq \big|\tilde{x}_e(x)-\tilde{x}_e(s_e)\big|,
	   $$
	   implies that
	   $$
	   \tilde{d}_e'(s_e^-,x_0) = \tilde{d}_e'(s_e^+,x_0) = 0.
	   $$
	  Making use of  $\big|\tilde{d}_e'(x,x_0)\big|\leq\big|\tilde{x}_e'(x,x_0)\big|$, we have
	  $$
	  \left| \frac{\tilde{d}_e'(x,x_0)-\tilde{d}_e'(s_e,x_0)}{x-s_e} \right|
	  \leq \left| \frac{\tilde{x}_e'(x)-\tilde{x}_e'(s_e)}{x-s_e} \right|.
	  $$
	  It then follows from $\tilde{x}_e''(s_e^\pm)=0$ that $ \tilde{d}_e''(s_e,x_0) = 0$. Therefore, at every segment point $s_e$, the first and second derivatives agree and equal zero.
	  
	  Finally, on each edge $e\in\mathcal{E}$, $\tilde{x}_e\in C^2(\overline{I}_e)$ and $d$ is piecewise linear, so $\tilde{d}_e\in C^2(\overline{I}_e)$. At all vertices $w\in\mathcal{V}$ and segment points, the first and second derivatives vanish continuously across all adjacent edges. Thus,
	  $$
	  \tilde{d}\in C(\mathcal{G}),\quad \tilde{d}'\in C(\mathcal{G}),\quad \tilde{d}''\in C(\mathcal{G}),
	  $$
	  which implies
	  $$
	  \tilde{d}\in C^2(\mathcal{G}).
	  $$
		
		(ii) For any $e\in\mathcal{E}$ with $e\cap\mathcal{V}_0=\emptyset$, noting that $\inf_{e\in\mathcal{E}}l_e>r$, we obtain from \eqref{s4:33} that for any $x\in(0,l_e)$
		$$\le|\tilde{d}_e^\prime(x,x_0)\ri|
		= \le|d^\prime(\tilde{x}_e(x),x_0)\cdot \tilde{x}_e^\prime(x)\ri|=|\pm1|\cdot\le|\eta^\prime\le(\frac{x}{l_e}\ri)\ri|\leq C,$$ 
		and
		\begin{align*}
			\le|\tilde{d}_e^{\prime\prime}(x,x_0)\ri|=\le|d^{\prime\prime}(\tilde{x}_e(x),x_0)\cdot \le(\tilde{x}_e^\prime(x)\ri)^2+d^\prime(\tilde{x}_e(x),x_0)\cdot \tilde{x}_e^{\prime\prime}(x)\ri|=|\pm1|\cdot\le|\frac{1}{l_e}\eta^{\prime\prime}\le(\frac{x}{l_e}\ri)\ri|\leq \le|\frac{1}{r}\eta^{\prime\prime}\le(\frac{x}{l_e}\ri)\ri|\leq C,
		\end{align*}
		where  we have used $d^{\prime\prime}(\tilde{x}_e(x),x_0)=0$ and \eqref{s4:4}.
		
		For any $q_e\in\mathcal{V}_0$, as $q_e\in (0,l_e)$ and $\inf_{e\in\mathcal{E}}l_e>r$, we also have
		$$\le|\tilde{d}_e^\prime(x,x_0)\ri|
		=\le|\frac{2q_e}{l_e}\ri|\cdot\le|\eta^\prime\le(\frac{2x}{l_e}\ri)\ri|\leq C,\quad\le|\tilde{d}_e^{\prime\prime}(x,x_0)\ri|=\le|\frac{2q_e}{l_e}\ri|\cdot\le|\frac{2}{l_e}\eta^{\prime\prime}\le(\frac{2x}{l_e}\ri)\ri|\leq\le|\frac{4}{r}\eta^{\prime\prime}\le(\frac{2x}{l_e}\ri)\ri|\leq C,\quad\forall\ x\in\le(0,\frac{l_e}{2}\ri),$$
		and
		$$\le|\tilde{d}_e^\prime(x,x_0)\ri|
		=\le|-\frac{2(l_e-q_e)}{l_e}\ri|\cdot\le|\eta^\prime\le(\frac{2x-l_e}{l_e}\ri)\ri|\leq C,$$
		$$\le|\tilde{d}_e^{\prime\prime}(x,x_0)\ri|=\le|-\frac{2(l_e-q_e)}{l_e}\ri|\cdot\le|\frac{2}{l_e}\eta^{\prime\prime}\le(\frac{2x-l_e}{l_e}\ri)\ri|\leq\le|\frac{4}{r}\eta^{\prime\prime}\le(\frac{2x-l_e}{l_e}\ri)\ri|\leq C,\quad\forall\ x\in\le(\frac{l_e}{2},l_e\ri).$$
		This completes the proof.
	\end{proof}
	
	For every $e\in\mathcal{E}$, compared with the modified distance function $\tilde{d}_e=d\circ\tilde{x}_e$, we can also directly define the modified distance function to be the following forms.
	
	\begin{example}{(polynomial flat mollifier).} Define
			\begin{enumerate}[(i)]
			\item if $e \cap \mathcal{V}_0 = \emptyset$, and $d(x,x_0) = d\big(i(e),x_0\big) + x$ as $i(e)$ to $j(e)$,
			$$\tilde{d}_e(x,x_0)=d\big(i(e),x_0\big)+l_e\le(10\le(\frac{x}{l_e}\ri)^3-15\le(\frac{x}{l_e}\ri)^4+6\le(\frac{x}{l_e}\ri)^5\ri), \quad x\in[0,l_e];$$
				\item if $e \cap \mathcal{V}_0 = \emptyset$, and $d(x,x_0) = d\big(j(e),x_0\big) +l_e- x$ as $i(e)$ to $j(e)$,
			$$\tilde{d}_e(x,x_0)=d\big(j(e),x_0\big)+l_e-l_e\le(10\le(\frac{x}{l_e}\ri)^3-15\le(\frac{x}{l_e}\ri)^4+6\le(\frac{x}{l_e}\ri)^5\ri), \quad x\in[0,l_e];$$
			\item if $e \cap \mathcal{V}_0=\{q_e\}$, 
			\[
			\tilde{d}_e(x,x_0)=
			\begin{cases}
				d\big(i(e),x_0\big)+q_e\le(10\le(\frac{2x}{l_e}\ri)^3-15\le(\frac{2x}{l_e}\ri)^4+6\le(\frac{2x}{l_e}\ri)^5\ri), & x\in\le[0,\frac{l_e}{2}\ri],\\[1.5ex]
				d\big(i(e),x_0\big)+q_e+(q_e-l_e)\le(10\le(\frac{2x-l_e}{l_e}\ri)^3-15\le(\frac{2x-l_e}{l_e}\ri)^4+6\le(\frac{2x-l_e}{l_e}\ri)^5\ri), & x\in\le[\frac{l_e}{2},l_e\ri].\\[1.5ex]
			\end{cases}
			\]
		\end{enumerate}
	\end{example}
	
	\begin{example}{(infinite-order flat mollifier).}
		Define the standard smooth mollifier kernel
		\[
		\rho(t) =
		\begin{cases}
			e^{-\frac{1}{t(1-t)}}, & t\in(0,1),\\
			0, & t=0,\ 1.
		\end{cases}
		\]
		Let the normalization constant be
		\[
		C_0 = \int_0^1 \rho(\tau)\,d\tau,
		\]
		and define the exponential flat mollifier
		\[
		\zeta(t) = \frac{1}{C_0}\int_0^t \rho(\tau)\,d\tau.
		\]
		Then the left and right components of the modified distance function are given by
			\[
		\tilde{d}_e(x,x_0)=
		\begin{cases}
			d\big(i(e),x_0\big)+q_e\zeta\left(\frac{2x}{l_e}\right), & x\in\le[0,\frac{l_e}{2}\ri],\\[1.5ex]
			d\big(i(e),x_0\big)+q_e+(q_e-l_e)\zeta\left(\frac{2x-l_e}{l_e}\right), & x\in\le[\frac{l_e}{2},l_e\ri].\\[1.5ex]
		\end{cases}
\]
	\end{example}
	
	\begin{remark}
		Let step function $\tau:[0,1]\to[0,1]$ be a $C^2$-function satisfying
		\begin{equation*}
			\begin{cases}
				\tau(0)=0,\ \tau(1)=1,\ \tau^\prime_+(0)=\tau^\prime_-(1)=0;\\
				\tau^\prime(x)\geq0, \text{ for all}\ x\in (0, 1);\\
				\text{there exists}\ C > 0\ \text{such that}\ |\tau^\prime(x)|\leq C\ \text{and}\  |\tau^{\prime\prime}(x)|\leq C, \text{ for all}\ x\in (0, 1).
			\end{cases}
		\end{equation*}
		We may also define a $C^1$-modified distance function as 
		\begin{equation}\label{s4:5}
			\rho=\bigoplus_{e\in\mathcal{E}}\rho_e,\quad\rho_e(x,x_0):=d(\tilde{x}_e(x),x_0),
		\end{equation}
		where $\tilde{x}_e:\overline{I}_e\to\overline{I}_e$ is defined identically to \eqref{s4:33} and \eqref{s4:34}, with $\eta$ replaced by $\tau$. It is easy to check that this function is not in $C^2$, as its second derivative has a jump discontinuity at the segment points of edges containing singular points. Additionally, $\rho_e^\prime(w^\pm,x_0)=0$ for all $w\in\mathcal{V}$ and edges $e\ni w$, while $\rho_e^\prime(s_e,x_0)=0$ for any segment point $s_e$.
	\end{remark}

	\subsection{Nonexistence for nonnegative global solutions}
	
	Let $\phi\in C^2([0, \infty))$ be a cut-off function on $[0, \infty)$, which satisfies the following conditions
	\begin{equation}\label{s4:23}
		\begin{cases}
			\phi\equiv0\ \text{on}\ [2,\infty)\ \text{and}\ \phi\equiv1\ \text{on}\ [0,1]; \\
			\phi^\prime(x)\leq0,\ \text{for every}\ x\in [0, \infty); \\
			\text{there exists}\ C > 0\ \text{such that}\ |\phi^\prime(x)|\leq C\ \text{and}\  |\phi^{\prime\prime}(x)|\leq C, \text{ for all}\ x\in [0, \infty).
		\end{cases}
	\end{equation}
	Fix $R\geq R_0=\max\{2j,1\}$. We define 
	\begin{equation}\label{s4:37}
		\varphi(x):=\phi\le(\frac{\tilde{d}(x,x_0)}{R}\ri),\quad \forall\ x\in\mathcal{G},
	\end{equation}
	where $\tilde{d}$ is the modified distance function given in \eqref{s4:36}. It is clear that $\varphi$ is a compactly supported function defined on $\mathcal{G}$. We let $\text{supp}(\varphi)=\{x\in\mathcal{G}:\tilde{d}(x,x_0)<2R\}$ denote the support of $\varphi$. Since $\text{supp}(\varphi)$ intersects only finitely many edges, which we denote as  $\mathcal{E}_\varphi =\{\mathcal{G}_{e_1},  \mathcal{G}_{e_2}, \dots,  \mathcal{G}_{e_N}\}$, and its intersection with each such edge $e$ is a line segment $[a_e, b_e] \subset [0, l_e]$ with $0 \leq a_e < b_e \leq l_e$. Specifically, such edges can only take the following forms: 
	\begin{itemize}
		\item The entire edge contained in $\text{supp}(\varphi)$, i.e., $[a_e,b_e]=[0,l_e]$; 
		\item A portion of an edge is contained in $\text{supp}(\varphi)$, i.e., $[a_e,b_e]=[0,b_e]$ or $[a_e,b_e]=[a_e,l_e]$.
	\end{itemize}
For convenience, we use $[a_e,b_e]$ to denote all such intervals without specifying their exact forms. 

Let
\begin{equation}\label{s4:8}
	\mathcal{V}_1=\{x\in\mathcal{V}:d(x,x_0)\leq2R\},
\end{equation}
$$\mathcal{V}_2=\{x\in\mathcal{E}:\tilde{d}(x,x_0)=2R\}=\{a_e=\pi_e^{-1}(a_e):e\in\mathcal{E}_\varphi, a_e\not\in\mathcal{V} \}\cup\{b_e=\pi_e^{-1}(b_e):e\in\mathcal{E}_\varphi, b_e\not\in\mathcal{V} \}.$$
We then denote the set of all cut vertices by
$$\mathcal{V}_c=\p\mathcal{V}_1\cup\mathcal{V}_2,$$
and the set of all vertices in $\overline{\text{supp}(\varphi)}$ by
$$\mathcal{V}^\prime=\{\mathcal{V}_1\setminus\p\mathcal{V}_1\}\cup\mathcal{V}_c,\quad \{\mathcal{V}_1\setminus\p\mathcal{V}_1\}\cap\mathcal{V}_c=\emptyset.$$ 

We next present the following upper bounds.
\begin{lemma}\label{s4:L2}
	There exists a constant $C>0$ such that for each edge $e\in\mathcal{E}$,
	\begin{equation}\label{s4:1}
	|\varphi_e^{\prime\prime}(x)|\leq \frac{C}{R}\textbf{1}_{A_R}(x), \quad\forall \ x\in(0,l_e),
	\end{equation}
	and for every $x\in\mathcal{V}$,
	\begin{equation}\label{s4:6}
		-\Delta_{\mathcal{V}}\varphi(x)\leq \frac{C}{R}\textbf{1}_{D_R}(x),
	\end{equation}
	where $A_R=\le\{x\in\mathcal{G}:R\leq \tilde{d}(x,x_0)\leq 2R\ri\}$ and $D_R=\le\{x\in\mathcal{G}:R-j\leq d(x,x_0)\leq 2R+j\ri\}$. Moreover, for any $x\in\mathcal{V}^\prime$, 
	\begin{equation}\label{s2:1}
		[\mathcal{K}(\varphi)](x)=0.
	\end{equation}
\end{lemma}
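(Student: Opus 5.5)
The proof splits into three parts: the edge estimate \eqref{s4:1} by the chain rule, the vertex estimate \eqref{s4:6} by a one-variable Taylor expansion of $\phi$, and the Kirchhoff condition \eqref{s2:1} from Proposition \ref{s4:P1}(i).

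\emph{Edge estimate \eqref{s4:1}.} Fix an edge $e$ and $x\in(0,l_e)$, with $x\neq l_e/2$ if $e$ carries a singular point. The chain rule gives
$$\varphi_e''(x)=\frac{1}{R^2}\phi''\Big(\frac{\tilde d_e}{R}\Big)(\tilde d_e')^2+\frac1R\phi'\Big(\frac{\tilde d_e}{R}\Big)\tilde d_e''.$$
\begin{itemize}
\item Both $\phi'$ and $\phi''$ vanish outside $[1,2]$, so $\varphi_e''(x)=0$ unless $R\le\tilde d_e(x)\le 2R$, i.e.\ unless $x\in A_R$.
\item By Proposition \ref{s4:P1}(ii), $|\tilde d_e'|$ and $|\tilde d_e''|$ are bounded by $C$, and $|\phi'|,|\phi''|\le C$ by \eqref{s4:23}.
\item Using $R\ge1$, this gives $|\varphi_e''|\le C(R^{-2}+R^{-1})\mathbf 1_{A_R}\le \frac{C}{R}\mathbf 1_{A_R}$.
\item At the midpoint $l_e/2$ the bound follows by continuity, since $\tilde d\in C^2$.
\end{itemize}

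\emph{Vertex estimate \eqref{s4:6}.} Fix $x\in\mathcal V$ and use $\tilde d=d$ on $\mathcal V$. For each neighbour $y\sim x$, Taylor-expand $\phi$ at $t=d(x,x_0)/R$:
$$\varphi(y)-\varphi(x)=\phi'(t)\frac{d(y,x_0)-d(x,x_0)}{R}+\frac12\phi''(\xi)\frac{(d(y,x_0)-d(x,x_0))^2}{R^2}.$$
Since $|d(y,x_0)-d(x,x_0)|\le l_{xy}\le j$, both terms are bounded in absolute value by $C j/R+Cj^2/R^2\le C/R$. Summing over $y$ and using assumption (v) of \eqref{s3:1}, $\sum_{y\sim x}\omega(x,y)\le C\mu_{\mathcal V}(x)$, gives $|\Delta_{\mathcal V}\varphi(x)|\le C/R$.

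It remains to check that $\Delta_{\mathcal V}\varphi(x)=0$ when $x\notin D_R$. Since every neighbour satisfies $|d(y,x_0)-d(x,x_0)|\le j$:
\begin{itemize}
\item if $d(x,x_0)<R-j$, then $x$ and all its neighbours satisfy $\tilde d=d<R$, so $\varphi\equiv1$ at all of them;
\item if $d(x,x_0)>2R+j$, then all have $d>2R$, so $\varphi\equiv0$.
\end{itemize}
In both cases $\Delta_{\mathcal V}\varphi(x)=0$, which proves $-\Delta_{\mathcal V}\varphi\le \frac{C}{R}\mathbf 1_{D_R}$.

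\emph{Kirchhoff condition \eqref{s2:1}.} For $x\in\mathcal V'$:
\begin{itemize}
\item If $x$ is a genuine vertex, Proposition \ref{s4:P1}(i) gives $\tilde d_e'(x)=0$ for every $e\ni x$. Then $\varphi_e'(x)=\frac1R\phi'(\tilde d/R)\tilde d_e'(x)=0$, so each outer normal derivative \eqref{s2:3} vanishes and $[\mathcal K(\varphi)](x)=0$.
\item If $x\in\mathcal V_2$ is an interior cut point with $\tilde d(x,x_0)=2R$, then $\phi'(2)=0$ (because $\phi\equiv0$ on $[2,\infty)$ and $\phi\in C^2$), so $\varphi'$ vanishes from both sides and the Kirchhoff sum is again $0$.
\end{itemize}

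The main obstacle I expect is the bookkeeping at the cut vertices $\mathcal V_c$: one has to interpret $\mathcal K$ for points of $\mathcal V_2$ that are not genuine vertices, and confirm that the set where $\Delta_{\mathcal V}\varphi$ can be nonzero, defined through $d$ at vertices, is genuinely contained in $D_R$.
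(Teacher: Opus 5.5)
Your proof is correct. The edge estimate \eqref{s4:1} and the Kirchhoff condition \eqref{s2:1} are handled as in the paper: the chain rule with Proposition \ref{s4:P1} for the edges, and for \eqref{s2:1} the vanishing of $\tilde d_e'$ at vertices together with the vanishing of $\phi'$ at level $2R$. Your case split (genuine vertices via Proposition \ref{s4:P1}(i), interior cut points of $\mathcal{V}_2$ via $\phi'(2)=0$) has a small advantage over the paper's split into $\mathcal{V}_1\setminus\partial\mathcal{V}_1$ and $\mathcal{V}_c$: it does not depend on how $\partial\mathcal{V}_1$ is interpreted.

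The real difference is in the vertex estimate \eqref{s4:6}. The paper first records the one-sided comparison $\Delta_{\mathcal{V}}\tilde d(x,x_0)\leq Cj$ in \eqref{s4:25}, which is \eqref{s1:4} with $\alpha=0$. It then omits the details and defers to the argument of Monticelli--Punzo--Somaglia. In that argument, the first-order Taylor term $\frac1R\phi'(d(x,x_0)/R)\,\Delta_{\mathcal{V}}d(x,x_0)$ is controlled using the sign $\phi'\leq 0$ together with the upper bound on $\Delta_{\mathcal{V}}d$, so only a one-sided estimate results; the paper also states $R\geq\max\{2j,1\}$ there. You instead bound each difference $|\varphi(y)-\varphi(x)|$ directly, using $|d(y,x_0)-d(x,x_0)|\leq j$ and assumption (v), and then check the support. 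This buys three things:
\begin{itemize}
\item it is self-contained and supplies exactly the details the paper omits;
\item it gives the stronger two-sided bound $|\Delta_{\mathcal{V}}\varphi|\leq \frac{C}{R}\mathbf{1}_{D_R}$;
\item it needs only $R\geq 1$; the mean value theorem alone would even suffice, without the second-order term.
\end{itemize}
What the paper's framing buys is compatibility with the general setting $\Delta_{\mathcal{V}}d\leq Cd^{-\alpha}$ with $\alpha>0$. There the sign of $\phi'$ is essential to obtain the improved rate $R^{-1-\alpha}$, which your cruder absolute-value bound on the first-order term would lose. With $\alpha=0$, as here, nothing is lost.
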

\begin{proof}
	We start by analyzing the support of $\phi_e^\prime$ and $\phi_e^{\prime\prime}$. Observe that, for each edge $e\in\mathcal{E}$, $\phi_e^\prime\left( \frac{\tilde{d}(x,x_0)}{R} \right) \neq 0$ and $\phi_e^{\prime\prime}\left( \frac{\tilde{d}(x,x_0)}{R} \right) \neq 0$ if and only if $x\in A_R$, in which case $\le|\phi_e^\prime\le(\frac{\tilde{d}(x,x_0)}{R}\ri)\ri|\leq C$ and $\le|\phi_e^{\prime\prime}\le(\frac{\tilde{d}(x,x_0)}{R}\ri)\ri|\leq C$. By the chain rule and \eqref{s4:35}, we have for any $x\in(a_e,b_e)$
	\begin{equation*}
		|\varphi_e^{\prime\prime}(x)|=\le|\phi_e^{\prime\prime}\le(\frac{\tilde{d}(x,x_0)}{R}\ri)\frac{1}{R^2}\le(\tilde{d}_e^{\prime}(x,x_0)\ri)^2+\phi_e^\prime\le(\frac{\tilde{d}(x,x_0)}{R}\ri)\frac{1}{R}\tilde{d}_e^{\prime\prime}(x,x_0)\ri|\leq\frac{C}{R^2}\textbf{1}_{A_R}(x)+\frac{C}{R}\textbf{1}_{A_R}(x)\leq\frac{C}{R}\textbf{1}_{A_R}(x),
	\end{equation*}
	where we have used the fact that $R\geq1$.
	
	In our setting, since the conditions $(v)$ and $(vi)$ in \eqref{s3:1} are satisfied, $\tilde{d}$ automatically satisfies \eqref{s1:4} with $\alpha = 0$. In fact, for any $x\in\mathcal{V}$,
	\begin{align}\label{s4:25}
		\Delta_{\mathcal{V}} \tilde{d}(x,x_0)=\frac{1}{\mu_{\mathcal{V}}(x)}\sum_{y\sim x}\o(x,y)\le(d(y,x_0)-d(x,x_0)\ri)\leq \frac{1}{\mu_{\mathcal{V}}(x)}\sum_{y\sim x}\o(x,y)d(x,y)\leq \frac{1}{\mu_{\mathcal{V}}(x)}\sum_{y\sim x}\o(x,y)j\leq Cj.
	\end{align}
	By an argument analogous to that in [\cite{M-P-S}, Section 4], the estimate \eqref{s4:6} follows. This proof requires $R\geq \max\{2j,1\}$.  We omit the details as they overlap with existing proofs.
	
	Finally, if $x\in\mathcal{V}_1\setminus\p\mathcal{V}_1$, for every $e\ni x$ with $e\in\mathcal{E}_\varphi$, by $(i)$ of Proposition \ref{s4:P1}, we have $\tilde{d}_e^{\prime}(x,x_0)=0$. If $x\in \mathcal{V}_c$, then we derive that $\phi_e^\prime\le(\frac{\tilde{d}(x,x_0)}{R}\ri)=0$. It then follows that
	\begin{equation*}
		\varphi_e^{\prime}(x)=\phi_e^\prime\le(\frac{\tilde{d}(x,x_0)}{R}\ri)\frac{1}{R}\tilde{d}_e^{\prime}(x,x_0)=0,\quad\forall\ x\in \mathcal{V}^\prime,\ e\in\mathcal{E}_\varphi.
	\end{equation*}
	Since each vertex $x \in \mathcal{V}^\prime$ belongs to only finitely many edges, we deduce
	$$	[\mathcal{K}(\varphi)](x)=\sum_{e\ni x}\frac{d\varphi_e(x)}{dn}=0.$$
	Hence, this immediately implies the thesis. 
\end{proof}

On metric graphs, the integration by parts formula for $\varphi$, when using the standard distance $d$, gives rise to boundary terms encoding outer normal derivative information at the vertices, in contrast to the case on combinatorial graphs.  It is precisely the introduction of the modified distance function $\tilde{d}$ that causes such vertex contributions to vanish in our key lemma below.

\begin{lemma}\label{s2:L1}
	Let $s>\max\{2,\sigma/(\sigma-1)\}$ with $\sigma>1$. Suppose that $u\in \mathcal{D}(\mathcal{G})$ satisfies the condition \eqref{s2:19} and $\varphi:\mathcal{G}\to\mathbb{R}$ is defined in \eqref{s4:37}. Then we have
	\begin{equation}\label{s2:10}
		\mathcal{L}_{\Delta_{\mathcal{G}}u}(\varphi^s)= \int_\mathcal{G}u(\Delta_{\mathcal{V}}\varphi^s)d\mu_{\mathcal{V}}+\sum_{e\in{\mathcal{E}_\varphi}}\int_{a_e}^{b_e} u_e(x)(\varphi_e^s)^{\prime\prime}(x) dx,
	\end{equation}
	where $\mathcal{D}(\mathcal{G})$ and $\mathcal{L}_{\Delta_{\mathcal{G}}u}(\cdot)$ are specified in \eqref{s2:2} and \eqref{s2:15}, respectively.
\end{lemma}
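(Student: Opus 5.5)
The argument splits $\mathcal{L}_{\Delta_{\mathcal{G}}u}(\varphi^s)$ by \eqref{s2:15} into its vertex part and its edge part and handles each separately. For the vertex part, $\int_\mathcal{G}\varphi^s(\Delta_{\mathcal{V}}u)\,d\mu_{\mathcal{V}}$, note that $\varphi^s$ restricted to $\mathcal{V}$ has finite support: it vanishes outside $\{\tilde d<2R\}$, and this set is contained in the finite ball $B_{2R+j}$ by \eqref{s4:22} and (iv) of \eqref{s3:1}. The symmetric summation by parts \eqref{s2:18} therefore applies directly and gives $\int_\mathcal{G}u(\Delta_{\mathcal{V}}\varphi^s)\,d\mu_{\mathcal{V}}$, which is the first term of \eqref{s2:10}.

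For the edge part, $\int_\mathcal{G}\varphi^s u''\,d\mu_{\mathcal{E}}=\sum_{e}\int_0^{l_e}\varphi_e^s u_e''\,dx$, only the edges in $\mathcal{E}_\varphi$ contribute, and on each of them only the interval $[a_e,b_e]$. The sum is therefore finite, and we may integrate by parts twice on each such interval. This is legitimate because $u_e\in C^2(I_e)\cap C^1(\overline I_e)$ with $u_e''\in L^\infty$. Also, $\varphi_e^s$ is $C^2$ on $\overline I_e$: $\phi\in C^2$ and $\tilde d_e\in C^2$ by Proposition \ref{s4:P1}, and $s>2$ ensures that $\varphi^s=\phi^s(\tilde d/R)$ keeps $C^2$ regularity even where $\phi$ vanishes. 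The result is
\begin{equation*}
\int_{a_e}^{b_e}\varphi_e^s u_e''\,dx=\Big[\varphi_e^s u_e'-(\varphi_e^s)'u_e\Big]_{a_e}^{b_e}+\int_{a_e}^{b_e}u_e(\varphi_e^s)''\,dx .
\end{equation*}
It remains to show that all the boundary terms, once summed over $e\in\mathcal{E}_\varphi$, cancel or vanish.

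For the terms $(\varphi_e^s)'u_e=s\varphi_e^{s-1}\varphi_e'u_e$ we argue endpoint by endpoint. At endpoints that are vertices in $\mathcal{V}_1\setminus\partial\mathcal{V}_1$, we have $\tilde d_e'=0$ by Proposition \ref{s4:P1}(i). At cut points in $\mathcal{V}_c$, either $\phi'=0$ or $\varphi=0$, since $\tilde d=2R$ there, and $\phi$ is $C^2$ and vanishes identically on $[2,\infty)$. In both cases $\varphi_e'=0$ at the endpoint, as in the proof of Lemma \ref{s4:L2}, so these terms vanish individually.

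For the terms $\varphi_e^s u_e'$, two kinds of endpoints occur. At a cut point, or at a vertex where $\tilde d\ge 2R$, we have $\varphi=0$. At an interior vertex $x\in\mathcal{V}_1\setminus\partial\mathcal{V}_1$, the value $\varphi^s(x)$ is common to all edges $e\ni x$ because $\varphi$ is continuous. Since the outward-normal sign convention \eqref{s2:3} matches the sign with which the endpoint evaluations enter, the contributions collect into
\begin{equation*}
\sum_{x}\varphi^s(x)\sum_{e\ni x}\frac{du_e(x)}{dn}=\sum_x\varphi^s(x)[\mathcal{K}(u)](x)=0
\end{equation*}
by the Kirchhoff/Neumann condition \eqref{s2:19}. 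Adding the vertex and edge parts then yields \eqref{s2:10}.

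The main obstacle is this boundary bookkeeping. One must check that every endpoint $a_e,b_e$ falls into exactly one of the cases above. In particular, the vertices on $\partial\mathcal{V}_1$ and the points of $\mathcal{V}_2$ must be shown to carry $\varphi=0$, or at least $\varphi'=0$ together with the appropriate behaviour of $u'$; this relies on $\tilde d=d$ at vertices and on $\phi$ vanishing to second order at $2$. One must also check that the orientation signs coming from $i(e),j(e)$ reproduce exactly $[\mathcal{K}(u)](x)$, including at vertices of degree one, where \eqref{s2:19} is the Neumann condition.
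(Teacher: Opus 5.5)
Your proposal is correct and follows essentially the same route as the paper. Both use symmetric summation by parts \eqref{s2:18} for the finitely supported vertex term, then integrate by parts twice on each $[a_e,b_e]$. The $u_e(\varphi_e^s)^\prime$ boundary terms vanish because $\varphi_e^\prime=0$ at every endpoint in $\mathcal{V}^\prime$, which the paper packages as $[\mathcal{K}(\varphi)]=0$ from Lemma \ref{s4:L2} via the same endpoint-wise argument. The $\varphi_e^s u_e^\prime$ terms collect into $\sum_x\varphi^s(x)[\mathcal{K}(u)](x)$, which vanishes by \eqref{s2:19} together with $\varphi=0$ on $\mathcal{V}_c$.
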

\begin{proof}
	Let $\mathcal{E}_\varphi = \{ \mathcal{G}_{e_1},  \mathcal{G}_{e_2}, \dots,  \mathcal{G}_{e_N}\}$ denote the finite set of edges in \(\mathcal{G}\) that intersect $\text{supp}(\varphi)$. For each $e \in \mathcal{E}_\varphi$, let $a_e$ and $b_e$ be the endpoints of the segment $[a_e, b_e] \subset [0,l_e]$, which is exactly the intersection of $e$ with $\text{supp}(\varphi)$. In addition, $\varphi^s(x)\not=0$ if and only if $x\in \mathcal{V}_1\setminus\p\mathcal{V}_1$ or $x\in\bigcup_{e\in \mathcal{E}_\varphi}\{e\}\times(a_e,b_e)$. Recalling the definition of the linear functional $\mathcal{L}_{\Delta_{\mathcal{G}}u}$ in \eqref{s2:15}, we then proceed to estimate each term in the decomposition
	$$\mathcal{L}_{\Delta_{\mathcal{G}}u}(\varphi^s)=\int_\mathcal{G}\varphi^s(\Delta_{\mathcal{V}}u)d\mu_{\mathcal{V}}+\int_\mathcal{G}\varphi^s(\Delta_{\mathcal{E}}u)d\mu_{\mathcal{E}}.$$ 
	In view of \eqref{s2:18}, we have
\begin{equation}\label{s4:38}
	\int_\mathcal{G}\varphi^s(\Delta_{\mathcal{V}}u)d\mu_{\mathcal{V}}=\int_\mathcal{G}u(\Delta_{\mathcal{V}}\varphi^s)d\mu_{\mathcal{V}}.
\end{equation}
	By the formula for integration by parts twice, it follows from \eqref{s2:11} and \eqref{s2:12} that
	\begin{align}\label{s2:5}
		\nonumber	\int_\mathcal{G}\varphi^s(\Delta_{\mathcal{E}}u)d\mu_{\mathcal{E}}&=\sum_{e\in\mathcal{E}_\varphi}\int_{a_e}^{b_e} u_e^{\prime\prime}(x)\varphi^s_e(x) dx\\
		\nonumber&= -\sum_{e\in{\mathcal{E}_\varphi}}\int_{a_e}^{b_e} u_e^{\prime}(x)(\varphi_e^s)^{\prime}(x) dx+\sum_{e\in{\mathcal{E}_\varphi}}\le(u_e^\prime(b_e)\varphi_e^s(b_e)-u_e^\prime(a_e)\varphi^s_e(a_e)\ri)\\
		\nonumber&=\sum_{e\in{\mathcal{E}_\varphi}}\int_{a_e}^{b_e} u_e(x)(\varphi_e^s)^{\prime\prime}(x) dx-\sum_{e\in{\mathcal{E}_\varphi}}\le(u_e(b_e)(\varphi_e^s)^\prime(b_e)-u_e(a_e)(\varphi_e^s)^\prime(a_e)\ri)\\
		&\quad+\sum_{e\in{\mathcal{E}_\varphi}}\le(u_e^\prime(b_e)\varphi_e^s(b_e)-u_e^\prime(a_e)\varphi^s_e(a_e)\ri).
	\end{align}
	As every vertex $x \in \mathcal{V}^\prime$  has finite degree (i.e., is incident to only finitely many edges), we are able to transform the sum over edge endpoints into a sum over the adjacent edges of each vertex.  Thus, it follows from \eqref{s2:3}, \eqref{s2:4} and \eqref{s2:1} that 
	\begin{align*}
		&\sum_{e\in{\mathcal{E}_\varphi}}\le(u_e(b_e)(\varphi_e^s)^\prime(b_e)-u_e(a_e)(\varphi_e^s)^\prime(a_e)\ri)\\
		&=\sum_{e\in{\mathcal{E}_\varphi}}\le(su_e(b_e)\varphi_e^{s-1}(b_e)\varphi_e^\prime(b_e)-su_e(a_e)\varphi_e^{s-1}(a_e)\varphi_e^\prime(a_e)\ri)\\
		&=\sum_{e\in{\mathcal{E}_\varphi}}\le(su_e(b_e)\varphi_e^{s-1}(b_e)\frac{d\varphi_e(b_e)}{dn}+su_e(a_e)\varphi_e^{s-1}(a_e)\frac{d\varphi_e(a_e)}{dn}\ri)\\
		&=\sum_{x\in\mathcal{V}^\prime}su(x)\varphi^{s-1}(x)[\mathcal{K}(\varphi)](x)\\
		&=0,
	\end{align*}
where we have used the fact that $\mathcal{V}^\prime$ is a finite set. Similarly, since $[\mathcal{K}(u)](x)=0$ for any  $x\in \mathcal{V}_1\setminus\p\mathcal{V}_1$ and $\varphi\equiv0$ on $\mathcal{V}_c$, 
	\begin{align*}
		&\sum_{e\in{\mathcal{E}_\varphi}}\le(u_e^\prime(b_e)\varphi_e^s(b_e)-u_e^\prime(a_e)\varphi^s_e(a_e)\ri)\\
		&=\sum_{x\in \mathcal{V}_1\setminus\p\mathcal{V}_1}\varphi^{s}(x)[\mathcal{K}(u)](x)+\sum_{x\in\mathcal{V}_c}\varphi^{s}(x)[\mathcal{K}(u)](x)\\
		&=0.
	\end{align*} 
	Building on the above results combined with \eqref{s2:5}, we conclude that
	\begin{align}\label{s2:17}
	\int_\mathcal{G}\varphi^s(\Delta_{\mathcal{E}}u)d\mu_{\mathcal{E}}=\sum_{e\in{\mathcal{E}_\varphi}}\int_{a_e}^{b_e} u_e(x)(\varphi_e^s)^{\prime\prime}(x) dx.
	\end{align}
	Combining \eqref{s4:38} and \eqref{s2:17}, we get
	\begin{equation*}
		\mathcal{L}_{\Delta_{\mathcal{G}}u}(\varphi^s) =\int_\mathcal{G}u(\Delta_{\mathcal{V}}\varphi^s)d\mu_{\mathcal{V}}+\sum_{e\in{\mathcal{E}_\varphi}}\int_{a_e}^{b_e} u_e(x)(\varphi_e^s)^{\prime\prime}(x) dx.
	\end{equation*}
	This is the desired result. 
\end{proof}

\begin{lemma}\label{s4:L1}
	Let $u$ be a nonnegative solution to \eqref{s1:1} satisfying \eqref{s2:19}. If condition \eqref{s3:2} holds, then there exists a constant $C>0$ such that 
	$$\int_{\mathcal{G}}V(x)u^\sigma(x) d\mu_\mathcal{G}\leq C.$$
\end{lemma}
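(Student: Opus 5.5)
We use the standard test-function method, with $\varphi^s$ as test function, $\varphi$ as in \eqref{s4:37} and $s>\max\{2,\sigma/(\sigma-1)\}$ fixed.

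\textbf{Step 1: testing the inequality.} On each edge the solution satisfies $u_e''+V_e u_e^\sigma\le 0$, and at each vertex $\Delta_{\mathcal V}u+Vu^\sigma\le0$. Multiplying by $\varphi^s\ge0$ and integrating against $d\mu_{\mathcal V}$ and $d\mu_{\mathcal E}$ gives
\[
\int_{\mathcal G}V u^\sigma\varphi^s\,d\mu_{\mathcal G}\le -\mathcal{L}_{\Delta_{\mathcal G}u}(\varphi^s).
\]
Lemma \ref{s2:L1} applies because $u$ satisfies \eqref{s2:19} and $\varphi$ satisfies \eqref{s2:1}. It converts the right-hand side into
\[
-\int_{\mathcal G}u\,\Delta_{\mathcal V}\varphi^s\,d\mu_{\mathcal V}-\sum_{e\in\mathcal E_\varphi}\int_{a_e}^{b_e}u_e(\varphi_e^s)''\,dx .
\]

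\textbf{Step 2: pointwise bounds on the test function.} On edges,
\[
(\varphi^s)''=s\varphi^{s-1}\varphi''+s(s-1)\varphi^{s-2}(\varphi')^2 .
\]
Since $s\ge2$, the second term is nonnegative, so $-(\varphi^s)''\le s\varphi^{s-1}|\varphi''|$. By \eqref{s4:1} this is at most $\frac{C}{R}\varphi^{s-1}\mathbf 1_{A_R}$.

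At vertices, convexity of $t\mapsto t^s$ gives $\varphi^s(y)-\varphi^s(x)\ge s\varphi^{s-1}(x)(\varphi(y)-\varphi(x))$, hence $-\Delta_{\mathcal V}\varphi^s\le s\varphi^{s-1}(-\Delta_{\mathcal V}\varphi)$. Together with \eqref{s4:6} this yields $-\Delta_{\mathcal V}\varphi^s(x)\le \frac{C}{R}\varphi^{s-1}(x)\mathbf 1_{D_R}(x)$.

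This is the step I expect to be the main obstacle. A vertex $x$ with $\varphi(x)=0$ but $\varphi(y)>0$ for a neighbour $y$ gives a term not controlled by $\varphi^{s-1}(x)$. I would handle it as in \cite{M-P-S}: either use $-\Delta_{\mathcal V}\varphi^s(x)\le0$ at such $x$ (true since $\varphi^s(y)\ge0=\varphi^s(x)$), or bound $\varphi^{s-1}(y)$ on the enlarged annulus $D_R$ using \eqref{s3:1}(v),(vi).

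\textbf{Step 3: absorption by Young's inequality.} Using $u\ge0$, Step 2 gives
\[
I:=\int_{\mathcal G}V u^\sigma\varphi^s\,d\mu_{\mathcal G}\le \frac{C}{R}\Big(\int_{D_R}u\varphi^{s-1}d\mu_{\mathcal V}+\int_{A_R}u\varphi^{s-1}d\mu_{\mathcal E}\Big).
\]
Write $u\varphi^{s-1}=(V^{1/\sigma}u\varphi^{s/\sigma})(V^{-1/\sigma}\varphi^{s-1-s/\sigma})$ and apply Young's inequality with exponents $\sigma$ and $\sigma'=\sigma/(\sigma-1)$. Since $s>\sigma'$, the exponent $(s-1-s/\sigma)\sigma'=s-\sigma'$ is positive, so that factor is at most $1$. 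This gives
\[
I\le \tfrac12 I + C R^{-\sigma'}\Big(\sum_{D_R\cap\mathcal V}\mu_{\mathcal V}V^{-\frac1{\sigma-1}}+\int_{A_R}V^{-\frac1{\sigma-1}}dx\Big).
\]
The vertex sum would need the ($\tfrac12$-weighted) $I$ restricted to $D_R$. Where $\varphi$ vanishes we used $-\Delta_{\mathcal V}\varphi^s\le0$, so only vertices with $\varphi>0$ contribute, and for those the absorption is legitimate.

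\textbf{Step 4: volume growth.} By \eqref{s4:22}, $A_R\subset\{R-j\le d\le 2R+j\}$. Since $R\ge2j$, this set is contained in $\{R/2\le d\le 4R\}\subset E_{R/2}\cup E_{2R}$, and the same holds for $D_R$. By \eqref{s3:2} (applied at radii $R/2$ and $2R$, adjusting $R_0$ if needed) the bracket is $\le CR^{\sigma'}$. Hence $I\le C$ uniformly in $R$. Finally, $\varphi\equiv1$ on $B_{R-j}$, so letting $R\to\infty$ and using monotone convergence gives $\int_{\mathcal G}Vu^\sigma\,d\mu_{\mathcal G}\le C$.
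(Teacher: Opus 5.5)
Your proposal is correct and follows the paper's proof essentially step for step: test with $\varphi^s$, use Lemma \ref{s2:L1} to remove the boundary terms, apply convexity at vertices, drop the $(\varphi_e')^2$ term on edges, absorb via Young's inequality, and cover $A_R\subset D_R$ by finitely many shells $E_{\cdot}$ to invoke \eqref{s3:2}. Two small corrections: the inclusion $\{R/2\le d\le 4R\}\subset E_{R/2}\cup E_{2R}$ is false because it misses $R<d<2R$, so you need $E_{R/2}\cup E_R\cup E_{2R}$ (the paper uses $E_{R/2}\cup E_R\cup E_{3R/2}$). Also, the ``obstacle'' in Step 2 is not one: the convexity inequality $\varphi^s(y)-\varphi^s(x)\ge s\varphi^{s-1}(x)(\varphi(y)-\varphi(x))$ already holds when $\varphi(x)=0$ and gives $-\Delta_{\mathcal V}\varphi^s(x)\le 0$ there, and the absorption in Step 3 needs no restriction since the Young term is simply a partial sum of $I$.
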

\begin{proof}
	Let $s>\max\{2,\sigma/(\sigma-1)\}$ with $\sigma>1$. Since $u$ fulfills \eqref{s2:21} and $0\leq\varphi\leq1$, we have 
	$$	V(x) u^{\sigma}(x)\varphi^s(x)\le(d\mu_{\mathcal{V}}+d\mu_{\mathcal{E}}\ri)\leq -\le(\Delta_\mathcal{G}  u(x)\ri) \varphi^s(x).$$
	Integrating both terms over $\mathcal{G}$, noting that \eqref{s2:22} and \eqref{s2:10}, we get
    \begin{align}\label{s4:3}
    \nonumber	\int_{\mathcal{G}}V(x) u^{\sigma}(x)\varphi^s(x)d\mu_{\mathcal{G}}&=\int_{\mathcal{G}}V(x) u^{\sigma}(x)\varphi^s(x)d\mu_{\mathcal{V}}+\int_{\mathcal{G}}V(x) u^{\sigma}(x)\varphi^s(x)d\mu_{\mathcal{E}}\\
    \nonumber	&\leq 	-\int_\mathcal{G}\le(\Delta_\mathcal{G}  u(x)\ri) \varphi^s(x)\\
    &= -\int_\mathcal{G}u(\Delta_{\mathcal{V}}\varphi^s)d\mu_{\mathcal{V}}-\sum_{e\in{\mathcal{E}_\varphi}}\int_{a_e}^{b_e} u_e(x)(\varphi_e^s)^{\prime\prime}(x) dx,
    \end{align}
   We now proceed to estimate each term separately.  Since
    $$\varphi^s(y)-\varphi^s(x)\geq s\varphi^{s-1}(x)\le(\varphi(y)-\varphi(x)\ri),\quad\forall\ x,y\in\mathcal{V},$$
    we obtain from \eqref{s4:6} that
    \begin{align}\label{s4:16}
    	\nonumber-\int_{\mathcal{G}}u\le(\Delta_\mathcal{V} \varphi^s \ri)d\mu_\mathcal{V}&=-\sum_{x\in\mathcal{V}}\sum_{y\sim x}\o(x,y)u(x)\le(\varphi^s(y)-\varphi^s(x)\ri)\\
    	\nonumber&\leq-s\sum_{x\in\mathcal{V}}\sum_{y\sim x}\o(x,y)u(x)\varphi^{s-1}(x)\le(\varphi(y)-\varphi(x)\ri)\\
    	\nonumber&=-s\sum_{x\in\mathcal{V}}\mu_\mathcal{V}(x)u(x)\varphi^{s-1}(x)\le(\frac{1}{\mu_\mathcal{V}(x)}\sum_{y\sim x}\o(x,y)\le(\varphi(y)-\varphi(x)\ri)\ri)\\
    	\nonumber&=-s\sum_{x\in\mathcal{V}}\mu_\mathcal{V}(x)u(x)\varphi^{s-1}(x)\Delta_\mathcal{V}\varphi(x)\\
    	&\leq \frac{C}{R}\sum_{x\in \mathcal{V}\cap D_R}\mu_\mathcal{V}(x)u(x)\varphi^{s-1}(x).
    \end{align}
    Using a standard application of Young’s inequality with exponent $\sigma>1$, we deduce that
    \begin{equation}\label{s4:12}
    	\frac{C}{R}\sum_{x\in \mathcal{V}\cap D_R}\mu_\mathcal{V}(x)u(x)\varphi^{s-1}(x)\leq \frac{1}{\sigma}\sum_{x\in \mathcal{V}\cap D_R}\mu_\mathcal{V}(x)V(x)u^\sigma(x)\varphi^{s}(x)+\frac{\sigma-1}{\sigma}\frac{C}{R^{\frac{\sigma}{\sigma-1}}}\sum_{x\in \mathcal{V}\cap D_R}\mu_\mathcal{V}(x)V^{-\frac{1}{\sigma-1}}(x)\varphi^{s-\frac{\sigma}{\sigma-1}}(x).
    \end{equation} 
    
    For the second term, in view of $s>2$, we have from \eqref{s4:1} that
    \begin{align}\label{s4:7}
    	\nonumber&-\sum_{e\in \mathcal{E}_\varphi}\int_{a_e}^{b_e}u_e(x)\le(\varphi_e^s\ri)^{\prime\prime}(x) dx\\
    	\nonumber&=-\sum_{e\in \mathcal{E}_\varphi}\int_{a_e}^{b_e}u_e(x)\le(s\varphi_e^{s-1}(x)\varphi_e^{\prime\prime}(x)+s(s-1)\varphi_e^{s-2}(x)\le(\varphi_e^\prime(x)\ri)^2\ri) dx\\
    	\nonumber&\leq -s\sum_{e\in \mathcal{E}_\varphi}\int_{a_e}^{b_e}u_e(x)\varphi_e^{s-1}(x)\varphi_e^{\prime\prime}(x)dx\\
    	\nonumber&\leq \frac{C}{R}\sum_{e\in \mathcal{E}}\int_{0}^{l_e}u_e(x)\varphi_e^{s-1}(x)\textbf{1}_{A_R}(x)dx\\
    &\leq \frac{1}{\sigma}\sum_{e\in \mathcal{E}}\int_{0}^{l_e}V_e(x)\varphi_e^s(x)u_e^\sigma(x)\textbf{1}_{A_R}(x) dx+\frac{\sigma-1}{\sigma}\frac{C}{R^{\frac{\sigma}{\sigma-1}}}\sum_{e\in \mathcal{E}}\int_{0}^{l_e}\varphi_e^{s-\frac{\sigma}{\sigma-1}}(x)V_e^{-\frac{1}{\sigma-1}}(x)\textbf{1}_{A_R}(x)dx.
    \end{align}	
	Then, substituting \eqref{s4:12} and \eqref{s4:7} into \eqref{s4:3}, we derive
	\begin{align*}
			&\int_{\mathcal{G}}V(x) u^{\sigma}(x)\varphi^s(x)d\mu_{\mathcal{G}}\\
			&\leq \frac{1}{\sigma}\sum_{x\in \mathcal{V}\cap D_R}\mu_\mathcal{V}(x)V(x)u^\sigma(x)\varphi^{s}(x)+\frac{1}{\sigma}\sum_{e\in \mathcal{E}}\int_{0}^{l_e}V_e(x)\varphi_e^s(x)u_e^\sigma(x)\textbf{1}_{A_R}(x) dx\\
			&\quad+\frac{\sigma-1}{\sigma}\frac{C}{R^{\frac{\sigma}{\sigma-1}}}\sum_{x\in \mathcal{V}\cap D_R}\mu_\mathcal{V}(x)V^{-\frac{1}{\sigma-1}}(x)\varphi^{s-\frac{\sigma}{\sigma-1}}(x)+\frac{\sigma-1}{\sigma}\frac{C}{R^{\frac{\sigma}{\sigma-1}}}\sum_{e\in \mathcal{E}}\int_{0}^{l_e}\varphi_e^{s-\frac{\sigma}{\sigma-1}}(x)V_e^{-\frac{1}{\sigma-1}}(x)\textbf{1}_{A_R}(x)dx\\
			&\leq \frac{1}{\sigma}\int_{\mathcal{G}}V(x) u^{\sigma}(x)\varphi^s(x)d\mu_{\mathcal{G}}+\frac{\sigma-1}{\sigma}\frac{C}{R^{\frac{\sigma}{\sigma-1}}}\sum_{x\in \mathcal{V}\cap D_R}\mu_\mathcal{V}(x)V^{-\frac{1}{\sigma-1}}(x)+\frac{\sigma-1}{\sigma}\frac{C}{R^{\frac{\sigma}{\sigma-1}}}\sum_{e\in \mathcal{E}\cap A_R}\int_{0}^{l_e}V_e^{-\frac{1}{\sigma-1}}(x)dx,
	\end{align*} 
	where we have used $0\leq\varphi^{s-\frac{\sigma}{\sigma-1}}\leq1$ due to $s>\sigma/(\sigma-1)$. Consequently,
	\begin{align}\label{s4:11}
	\int_{\mathcal{G}}V(x) u^{\sigma}(x)\varphi^s(x)d\mu_{\mathcal{G}}&\leq \frac{C}{R^{\frac{\sigma}{\sigma-1}}}\sum_{x\in \mathcal{V}\cap D_R}\mu_\mathcal{V}(x)V^{-\frac{1}{\sigma-1}}(x)+\frac{C}{R^{\frac{\sigma}{\sigma-1}}}\sum_{e\in \mathcal{E}\cap A_R}\int_{0}^{l_e}V_e^{-\frac{1}{\sigma-1}}(x)dx.
	\end{align}
	
	Next, we claim that $A_R\subset D_R$ for all $R\geq 2j$. In fact, if $\tilde{d}(x,x_0)\leq 2R$, by \eqref{s4:22}, we have
	$$d(x,x_0)\leq \tilde{d}(x,x_0)+j\leq 2R+j.$$
	On the other hand, suppose that $\tilde{d}(x,x_0)\geq R$, then 
	$$d(x,x_0)\geq \tilde{d}(x,x_0)-j\geq R-j.$$
	Hence, this concludes th proof of the claim. Recall $E_R=\{x\in\mathcal{G}:R\leq d(x,x_0)\leq 2R\}$. For every $R\geq2j$, it follows
	\begin{equation}\label{s4:10}
		A_R\subset D_R\subset \bigcup_{k=1}^3 E_{\frac{k}{2}R}.
	\end{equation}
	Note that $\varphi\geq0$ and $\varphi\equiv1$ on $\tilde{B}_R=\{x\in\mathcal{G}:\tilde{d}(x,x_0)<R\}$.  Then for every large enough $R$, by \eqref{s3:2} and \eqref{s4:11}, we have
	\begin{align*}
			\int_{\mathcal{G}}V(x) u^{\sigma}(x)\textbf{1}_{\tilde{B}_R}(x)d\mu_{\mathcal{G}}
			&\leq 	\int_{\mathcal{G}}V(x) u^{\sigma}(x)\varphi^s(x)d\mu_{\mathcal{G}}\\
			&\leq \frac{C}{R^{\frac{\sigma}{\sigma-1}}}\sum_{x\in \mathcal{V}\cap D_R}\mu_\mathcal{V}(x)V^{-\frac{1}{\sigma-1}}(x)+\frac{C}{R^{\frac{\sigma}{\sigma-1}}}\sum_{e\in \mathcal{E}\cap D_R}\int_{0}^{l_e}V_e^{-\frac{1}{\sigma-1}}(x)dx\\
			&\leq \frac{C}{R^{\frac{\sigma}{\sigma-1}}}\sum_{k=1}^3\sum_{x\in \mathcal{V}\cap E_{\frac{k}{2}R}}\mu_\mathcal{V}(x)V^{-\frac{1}{\sigma-1}}(x)+\frac{C}{R^{\frac{\sigma}{\sigma-1}}}\sum_{k=1}^3\sum_{e\in \mathcal{E}\cap E_{\frac{k}{2}R}}\int_{0}^{l_e}V_e^{-\frac{1}{\sigma-1}}(x)dx\\
			&\leq \frac{C}{R^{\frac{\sigma}{\sigma-1}}}\sum_{k=1}^3 \le(\frac{k}{2}R\ri)^{\frac{\sigma}{\sigma-1}}\\
			&\leq C,
	\end{align*}
	where the constant $C>0$ is independent of $R$. Letting $R\ra\infty$, we conclude that
	$$\int_{\mathcal{G}}V(x)u^\sigma(x) d\mu_\mathcal{G}\leq C.$$
	This is the desired result. 
\end{proof}

	With all the preceding lemmas established, we are ready to prove Theorem \ref{s3:T1}.\\

\noindent$\textbf{\emph{Proof of Theorem \ref{s3:T1}.}}$  Under the conditions of Lemma  \ref{s4:L1}, we aim to show that $u\equiv0$. Applying the H\"{o}lder inequality, in view of \eqref{s3:2} and \eqref{s4:16}, we get
\begin{align*}
	\nonumber-\int_{\mathcal{G}}u\le(\Delta_\mathcal{V} \varphi^s \ri)d\mu_\mathcal{V}&\leq  \frac{C}{R}\sum_{x\in \mathcal{V}\cap D_R}\mu_\mathcal{V}(x)u(x)\varphi^{s-1}(x)\\
		\nonumber&\leq \frac{C}{R}\le(\sum_{x\in \mathcal{V}\cap D_R}\mu_\mathcal{V}(x)V(x)u^\sigma(x)\varphi^{s}(x)\ri)^{\frac{1}{\sigma}}\le(\sum_{x\in \mathcal{V}\cap D_R}\mu_\mathcal{V}(x)V^{-\frac{1}{\sigma-1}}(x)\varphi^{s-\frac{\sigma}{\sigma-1}}(x)\ri)^{\frac{\sigma-1}{\sigma}}\\
	\nonumber&\leq \frac{C}{R}\le(\sum_{x\in \mathcal{V}\cap D_R}\mu_\mathcal{V}(x)V(x)u^\sigma(x)\ri)^{\frac{1}{\sigma}}\le(\sum_{x\in \mathcal{V}\cap D_R}\mu_\mathcal{V}(x)V^{-\frac{1}{\sigma-1}}(x)\ri)^{\frac{\sigma-1}{\sigma}}\\
	\nonumber&\leq \frac{C}{R}\le(\int_{D_R} V(x)u^\sigma(x)d\mu_{\mathcal{V}}\ri)^{\frac{1}{\sigma}}\le(\sum_{k=1}^3 \le(\frac{k}{2}R\ri)^{\frac{\sigma}{\sigma-1}}\ri)^{\frac{\sigma-1}{\sigma}}\\
	&\leq C\le(\int_{D_R} V(x)u^\sigma(x)d\mu_{\mathcal{V}}\ri)^{\frac{1}{\sigma}}.
\end{align*}
By \eqref{s3:2} , \eqref{s4:7} and \eqref{s4:10},  we have
\begin{align*}
	&-\sum_{e\in \mathcal{E}_\varphi}\int_{a_e}^{b_e}u_e(x)\le(\varphi_e^s\ri)^{\prime\prime}(x) dx\\
	&\leq \frac{C}{R}\sum_{e\in \mathcal{E}}\int_{0}^{l_e}u_e(x)\varphi_e^{s-1}(x)\textbf{1}_{D_R}(x)dx\\
	&\leq \frac{C}{R} \sum_{e\in\mathcal{E}}\le\{\le(\int_{0}^{l_e}V_e(x)\varphi_e^s(x)u_e^\sigma(x) \textbf{1}_{D_R}(x)dx\ri)^\frac{1}{\sigma}\le(\int_{a_e}^{b_e}\varphi_e^{s-\frac{\sigma}{\sigma-1}}(x)V_e^{-\frac{1}{\sigma-1}}(x)\textbf{1}_{D_R}(x)dx\ri)^\frac{\sigma-1}{\sigma}\ri\}\\\
	&\leq \frac{C}{R} \le(\sum_{e\in\mathcal{E}}\int_{0}^{l_e}V_e(x)\varphi_e^s(x)u_e^\sigma(x) \textbf{1}_{D_R}(x)dx\ri)^\frac{1}{\sigma}\le(\sum_{e\in\mathcal{E}}\int_{0}^{l_e}\varphi_e^{s-\frac{\sigma}{\sigma-1}}(x)V_e^{-\frac{1}{\sigma-1}}(x)\textbf{1}_{D_R}(x)dx\ri)^\frac{\sigma-1}{\sigma}\\
	&\leq \frac{C}{R} \le(\int_{D_R}V(x)u^\sigma(x) d\mu_\mathcal{E}\ri)^\frac{1}{\sigma}\le(\sum_{k=1}^3 \le(\frac{k}{2}R\ri)^{\frac{\sigma}{\sigma-1}}\ri)^{\frac{\sigma-1}{\sigma}}\\
	& \leq C\le(\int_{D_R}V(x)u^\sigma(x) d\mu_\mathcal{E}\ri)^\frac{1}{\sigma}.
\end{align*}
Combining \eqref{s4:3} and the above estimates, we deduce 
\begin{align*}
		\int_{\mathcal{G}}V(x) u^{\sigma}(x)\textbf{1}_{\tilde{B}_R}d\mu_{\mathcal{G}}&\leq 	\int_{\mathcal{G}}V(x) u^{\sigma}(x)\varphi^s(x)d\mu_{\mathcal{G}}\\
		&\leq C\le(\int_{D_R} V(x)u^\sigma(x)d\mu_{\mathcal{V}}\ri)^{\frac{1}{\sigma}}+C\le(\int_{D_R}V(x)u^\sigma(x) d\mu_\mathcal{E}\ri)^\frac{1}{\sigma}.
\end{align*}
By Lemma \ref{s4:L1}, the right-hand side tends to zero as $R \to \infty$. Therefore,
$$	\int_{\mathcal{G}}V(x)u^\sigma(x) d\mu_\mathcal{G}\leq 0.$$
Since $V$, $\mu_{\mathcal{V}}$ and $\mu_\mathcal{E}$ are positive and $u$ is nonnegative, 
\begin{equation*}
	\int_{\mathcal{G}}V(x)u^\sigma(x) d\mu_\mathcal{G}
	=\sum_{x\in\mathcal{V}}\mu_{\mathcal{V}}(x)V(x)u^\sigma(x)+\sum_{e\in\mathcal{E}}\int_0^{l_e}V_e(x)u_e^\sigma(x)dx\geq0.
\end{equation*}
We thus conclude that $u\equiv0$ on $\mathcal{G}$. $\hfill\Box$\\

	Using a modified $C^1$-distance function also yields the same conclusion, and we only provide a key sketch here. \\
	
\noindent$\textbf{\emph{Another proof of Theorem \ref{s3:T1}.}}$  Fix $x_0\in\mathcal{V}$ and $R\geq R_0$. We define another test function $\Phi:\mathcal{G}\to\mathbb{R}$ by
\begin{equation}\label{s4:2}
	\Phi(x):=\phi\le(\frac{\rho(x,x_0)}{R}\ri),
\end{equation}
where $\rho$ is given as in \eqref{s4:5} and $\phi$ is a cut-off function satisfying \eqref{s4:23}. Let $\mathcal{E}_{\Phi}$ be the set of edges intersected by the support $\text{supp}(\Phi)$. It is precisely the lack of second derivative information of the modified distance $\rho$ at segment points, the intersecting interval $[a_e,b_e]$ becomes complex. Specifically,
		\begin{itemize}
		\item The entire edge without a singular point is contained in $\text{supp}(\Phi)$, i.e., $[a_e,b_e]=[0,l_e]$; 
		\item A portion of an edge without a singular point is contained in $\text{supp}(\Phi)$, i.e., $[a_e,b_e]=[0,b_e]$ or $[a_e,b_e]=[a_e,l_e]$;
		\item The entire edge containing a singular point is contained in $\text{supp}(\Phi)$, i.e., $[a_e,b_e]=[0,s_e]$ or $[a_e,b_e]=[s_e,l_e]$;
		\item A portion of an edge containing a singular point is contained in $\text{supp}(\Phi)$, and $s_e\not\in\text{supp}(\varphi)$, i.e., $[a_e,b_e]=[0,b_e]$ or $[a_e,b_e]=[a_e,l_e]$ and $a_e,b_e\not=s_e$.
	\end{itemize}
	
	Let
	$$\mathcal{V}_3=\{x\in\mathcal{E}:\rho(x,x_0)=2R\},\quad\mathcal{V}_4=\{x\in\{s_e=l_e/2:e\cap\mathcal{V}_0=\emptyset,e\in\mathcal{E}\}:\rho(x,x_0)\leq2R\}.$$
	The set of all cut vertices and the set of all vertices in $\overline{\text{supp}(\Phi)}$ are defined by
	$$\mathcal{V}_c=\p\mathcal{V}_1\cup\mathcal{V}_3,\quad \mathcal{V}^\prime=\{\mathcal{V}_1\setminus\p\mathcal{V}_1\}\cup\{\mathcal{V}_4\setminus \mathcal{V}_3\}\cup\mathcal{V}_c,$$ 
	where $V_1$ is given in \eqref{s4:8}. Clearly, 
	$$\{\mathcal{V}_1\setminus\p\mathcal{V}_1\}\cap\{\mathcal{V}_4\setminus \mathcal{V}_3\}=\{\mathcal{V}_1\setminus\p\mathcal{V}_1\}\cap\mathcal{V}_c=\{\mathcal{V}_4\setminus \mathcal{V}_3\}\cap\mathcal{V}_c=\emptyset.$$ 
	
	Since the modified distance functions $\tilde{d}(x,x_0)$ and $\rho(x,x_0)$ coincide with the original distance function $d(x,x_0)$ at vertices $x\in\mathcal{V}$, the estimate 
	\begin{equation*}
		-\Delta_{\mathcal{V}}\Phi(x)\leq \frac{C}{R}\textbf{1}_{D_R}(x),\quad\forall\ x\in\mathcal{V},
	\end{equation*}
	 remain consistent with the previous proof and require no changes. We thus only focus on the differences hereafter, namely the estimates on edges. As $(ii)$ in Proposition \ref{s4:P1} still holds for $\rho$, 
	 $$	|\Phi_e^{\prime\prime}(x)|\leq \frac{C}{R}\textbf{1}_{A_R}(x), \quad\forall \ e\in\mathcal{E},\ x\in(a_e,b_e),$$
	 follows similarly, where $A_R=\le\{x\in\mathcal{E}:R\leq \rho(x,x_0)\leq 2R\ri\}$.  
	 
	 For $x\in\mathcal{V}_4\setminus \mathcal{V}_3$, it is obvious that $x$ is a segment point $s_e$ on edge $e$ of degree $2$, incident to exactly two edges $e_1$ and $e_2$ that can be represented by intervals $(0,l_e/2)$ and $(l_e/2,l_e)$ respectively. Thus, if $u\in\mathcal{D}(\mathcal{G})$ is a nonnegative solution of \eqref{s1:1}, we have $u_e\in C^1(\overline{I}_e)$, so $u_{e_1}^\prime(s_e)=u_{e_2}^\prime(s_e)$. This implies that 
	 \begin{align*}
	 	\sum_{x\in \mathcal{V}_4\setminus \mathcal{V}_3}\Phi^{s}(x)[\mathcal{K}(u)](s_e)
	 	&=\sum_{x\in\mathcal{V}_4\setminus \mathcal{V}_3}\Phi^{s}(x)\le(\frac{d\Phi_{e_1}(j(e_1))}{dn}+\frac{d\Phi_{e_2}(i(e_2))}{dn}\ri)\\
	 	&=\sum_{x\in \mathcal{V}_4\setminus \mathcal{V}_3}\Phi^{s}(x)\le(u_{e_1}^\prime(s_e)-u_{e_2}^\prime(s_e)\ri)\\
	 	&=0.
	 \end{align*}
	Since $\rho_e^\prime(w^\pm,x_0)=0$ for any $w\in\mathcal{V}$ and all $e\in \mathcal{E}_\Phi$, $\rho_e^\prime(s_e,x_0)=0$ for every segment point $s_e$, we also obtain that
	$$[\mathcal{K}(\Phi)](x)=0,\quad\forall\ x\in\mathcal{V}^\prime.$$ 
	 Multiply both sides of \eqref{s2:21} by the test function $\Phi^s$, and integration leads to
	   \begin{align*}
	 	\nonumber	\int_{\mathcal{G}}V(x) u^{\sigma}(x)\Phi^s(x)d\mu_{\mathcal{G}}&\leq -\int_\mathcal{G}u(\Delta_{\mathcal{V}}\Phi^s)d\mu_{\mathcal{V}}-\sum_{e\in{\mathcal{E}_\Phi}}\int_{a_e}^{b_e} u_e(x)(\Phi_e^s)^{\prime\prime}(x) dx+\sum_{x\in\mathcal{V}^\prime}su(x)\varphi^{s-1}(x)[\mathcal{K}(\Phi)](x)\\
	 	&\quad-\sum_{x\in \mathcal{V}_1\setminus\p\mathcal{V}_1}\Phi^{s}(x)[\mathcal{K}(u)](x)-\sum_{x\in\mathcal{V}_4\setminus \mathcal{V}_3}\Phi^{s}(x)[\mathcal{K}(u)](x)-\sum_{x\in\mathcal{V}_c}\Phi^{s}(x)[\mathcal{K}(u)](x)\\
	 	&=-\int_\mathcal{G}u(\Delta_{\mathcal{V}}\Phi^s)d\mu_{\mathcal{V}}-\sum_{e\in{\mathcal{E}_\Phi}}\int_{a_e}^{b_e} u_e(x)(\Phi_e^s)^{\prime\prime}(x) dx,
	 \end{align*}
	where we have used $u$ satisfies the condition \eqref{s2:19} and $\Phi(x)=0$ for each vertex $x\in \mathcal{V}_c$. The rest of the proof follows similarly to the previous one and is not detailed herein.$\hfill\Box$

	\subsection{Nonexistence for sign-changing global solutions}
	
		Fix $x_0\in\mathcal{V}$ and let $R\geq R_0$. Let $\psi:[-j/R, \infty)\to(0,\infty)$ be a function satisfying 
	\begin{itemize}
		\item $\psi\in C^2([-j/R, \infty))$;
		\item $\psi\equiv1$ on $[-j/R,1]$ and $\psi(r)=e^{-\delta r}$ on $[2,\infty)$ for some  $\delta>0$;
		\item $\psi^\prime(x)\leq0$ for all $x\in [-j/R, \infty)$.
	\end{itemize}
	Then there exist constants $C_1>0$, $C_2 > 0$ such that
	\begin{equation}\label{s4:13}
		0<C_2e^{-\delta r}\leq\psi(r)\leq C_1e^{-\delta r},\quad\forall\ r\in \le[-\frac{j}{R}, \infty\ri),
	\end{equation}
	and
	\begin{equation}\label{s4:14}
		|\psi^\prime(r)|\leq C_1 e^{-\delta r},\quad|\psi^{\prime\prime}(r)|\leq C_1e^{-\delta r},\quad\forall\ r\in \le[-\frac{j}{R}, \infty\ri).
	\end{equation}
	Define 
	\begin{equation}\label{s4:15}
		\Psi(x):=\psi\le(\frac{\tilde{d}(x,x_0)-j}{R}\ri),\quad \forall\ x\in\mathcal{G}.
	\end{equation}
	Here, $\tilde{d}$ is a modified distance function defined in \eqref{s4:36}, and $\Psi>0$ has support on the entire graph $\mathcal{G}$. We now establish estimates for the derivatives of $\Psi$.
	\begin{lemma}\label{s4:L3}
		There exists a constant $C>0$ such that for each edge $e\in\mathcal{E}$,
		\begin{equation}\label{s4:9}
		|\Psi_e^{\prime\prime}(x)|\leq \frac{C}{R}e^{\frac{-\delta \tilde{d}(x,x_0)}{R}}\textbf{1}_{\tilde{B}_{R+j}^c}(x), \quad\forall \ x\in(0,l_e),
		\end{equation}
		and for all $x\in\mathcal{V}$,
		\begin{equation}\label{s4:26}
			\le|\Delta_{\mathcal{V}}\Psi(x)\ri|\leq\frac{C}{R}e^{-\frac{\delta d(x,x_0)}{R}}\textbf{1}_{B_{R}^c}(x),
		\end{equation}
		where $\tilde{B}_{R+j}^c=\le\{x\in\mathcal{G}:\tilde{d}(x,x_0)\geq R+j\ri\}$ and $B_{R}^c=\le\{x\in\mathcal{G}:d(x,x_0)\geq R\ri\}$.  Moreover, for any $x\in\mathcal{V}$, 
\begin{equation}\label{s4:17}
	[\mathcal{K}(\Psi)](x)=0.
\end{equation}
	\end{lemma}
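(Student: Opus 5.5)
The proof mirrors Lemma \ref{s4:L2}, with the cut-off $\phi$ replaced by the exponentially decaying profile $\psi$. We work separately on edges, at vertices, and with the Kirchhoff sums.

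\emph{Edge estimate \eqref{s4:9}.} On each edge $e$, apply the chain rule to $\Psi_e(x)=\psi\big((\tilde d_e(x,x_0)-j)/R\big)$:
\begin{equation*}
\Psi_e''(x)=\frac{1}{R^2}\psi''\Big(\frac{\tilde d_e-j}{R}\Big)(\tilde d_e')^2+\frac1R\psi'\Big(\frac{\tilde d_e-j}{R}\Big)\tilde d_e''.
\end{equation*}
Since $\psi\equiv1$ on $[-j/R,1]$, both $\psi'$ and $\psi''$ vanish there. Hence $\Psi_e''(x)=0$ unless $\tilde d(x,x_0)\geq R+j$, which gives the indicator of $\tilde B_{R+j}^c$. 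On that set, Proposition \ref{s4:P1}(ii) bounds $|\tilde d_e'|$ and $|\tilde d_e''|$ by a constant, and \eqref{s4:14} gives
\begin{equation*}
|\psi^{(k)}|\leq C_1e^{-\delta(\tilde d-j)/R}=C_1e^{\delta j/R}e^{-\delta\tilde d/R}\leq Ce^{-\delta\tilde d/R}.
\end{equation*}
Here $e^{\delta j/R}$ is bounded because $R\geq R_0\geq 2j$. Finally $R\geq1$, so $1/R^2\leq 1/R$, and \eqref{s4:9} follows. The point $l_e/2$ causes no trouble because $\tilde d\in C^2$ there.

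\emph{Kirchhoff condition \eqref{s4:17}.} At any $x\in\mathcal V$ and any $e\ni x$, we have $\Psi_e'(x)=\frac1R\psi'(\cdot)\,\tilde d_e'(x,x_0)=0$, since $\tilde d_e'(x,x_0)=0$ by Proposition \ref{s4:P1}(i). Summing the outer normal derivatives over the finitely many edges at $x$ gives $[\mathcal K(\Psi)](x)=0$. Unlike Lemma \ref{s4:L2}, there are no cut vertices to treat, because $\Psi>0$ everywhere.

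\emph{Vertex estimate \eqref{s4:26}; this is the main obstacle.} At vertices $\tilde d=d$, so $\Psi(y)=\psi((d(y,x_0)-j)/R)$ for every vertex $y$. For $y\sim x$, the Lipschitz property gives $|d(y,x_0)-d(x,x_0)|\leq l_{xy}\leq j$, so both arguments lie in $[-j/R,\infty)$ where $\psi$ is defined.

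First, suppose $d(x,x_0)<R$. Then every neighbour satisfies $d(y,x_0)-j<R$, so all arguments lie in $[-j/R,1]$ where $\psi\equiv1$. Hence $\Delta_{\mathcal V}\Psi(x)=0$, which produces the indicator of $B_R^c$.

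Next, suppose $d(x,x_0)\geq R$. By the mean value theorem,
\begin{equation*}
|\Psi(y)-\Psi(x)|\leq\frac{j}{R}\sup_{\xi}|\psi'(\xi)|,
\end{equation*}
where $\xi$ ranges between the two arguments, so $\xi\geq (d(x,x_0)-2j)/R$. By \eqref{s4:14}, $|\psi'(\xi)|\leq C_1e^{-\delta\xi}\leq C_1e^{2\delta j/R}e^{-\delta d(x,x_0)/R}$, and $e^{2\delta j/R}$ is again bounded since $R\geq 2j$. Summing over $y\sim x$ and using assumption (v) of \eqref{s3:1}, $\sum_{y\sim x}\omega(x,y)\leq C\mu_{\mathcal V}(x)$, yields
\begin{equation*}
|\Delta_{\mathcal V}\Psi(x)|\leq\frac{Cj}{R}e^{-\delta d(x,x_0)/R}.
\end{equation*}
This is \eqref{s4:26}. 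The delicate points are the bookkeeping of the $j$-shift, which keeps the arguments in the domain of $\psi$ and keeps the support of $\Delta_{\mathcal V}\Psi$ away from $B_R$, and the uniform bound on the exponential factors $e^{\delta j/R}$ and $e^{2\delta j/R}$. Both are settled by $R\geq 2j$.
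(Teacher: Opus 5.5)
Your proposal is correct and follows the paper's proof: the chain rule plus Proposition \ref{s4:P1}(ii) on edges, and the vanishing one-sided derivatives $\tilde d_e'(x,x_0)=0$ for the Kirchhoff sum, with the same $R\geq 2j$ bookkeeping for the $j$-shift. The only difference is at vertices, where you use the first-order mean value theorem with $\xi\geq (d(x,x_0)-2j)/R$ and assumption (v) of \eqref{s3:1}, instead of the paper's second-order Taylor expansion combined with the bound \eqref{s4:25} on $\Delta_{\mathcal V} d$. Your version is slightly cleaner, since the target is only an $O(1/R)$ bound in absolute value, and it avoids needing a two-sided bound on $\Delta_{\mathcal V} d$ where \eqref{s4:25} gives only an upper bound.
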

	
	\begin{proof}
		For each $e\in\mathcal{E}$, by Proposition \ref{s4:P1}, we obtain
		\begin{align*}
			\le|\Psi_e^{\prime\prime}(x)\ri|&=\le|\psi_e^{\prime\prime}\le(\frac{\tilde{d}(x,x_0)-j}{R}\ri)\frac{1}{R^2}\le(\tilde{d}_e^{\prime}(x,x_0)\ri)^2+\psi_e^\prime\le(\frac{\tilde{d}(x,x_0)-j}{R}\ri)\frac{1}{R}\tilde{d}_e^{\prime\prime}(x,x_0)\ri|\\
			&\leq\frac{C}{R^2}e^{-\delta\frac{\tilde{d}(x,x_0)-j}{R}}\textbf{1}_{\tilde{B}_{R+j}^c}(x)+\frac{C}{R}e^{-\delta\frac{\tilde{d}(x,x_0)-j}{R}}\textbf{1}_{\tilde{B}_{R+j}^c}(x)\\
			&\leq \frac{C}{R}e^{\frac{-\delta \tilde{d}(x,x_0)}{R}}\textbf{1}_{\tilde{B}_{R+j}^c}(x), \quad\forall\ x\in(0,l_e),
		\end{align*}
		where we have used the fact that $\psi_e^{\prime}(x)=0$, $\psi_e^{\prime\prime}(x)=0$ on $\tilde{B}_{R+j}$, and $R\geq R_0=\max\{2j,1\}$.
		
		For the vertex-based Laplacian, note that for each $x\in\mathcal{V}$, 
		\begin{align*}
			\Delta_{\mathcal{V}}\Psi(x)&=\frac{1}{\mu_{\mathcal{V}}(x)}\sum_{y\sim x}\o(x,y)\le(\Psi(y)-\Psi(x)\ri)\\
			&=\frac{1}{\mu_{\mathcal{V}}(x)}\sum_{y\sim x}\o(x,y)\le(\psi\le(\frac{\tilde{d}(y,x_0)-j}{R}\ri)-\psi\le(\frac{\tilde{d}(x,x_0)-j}{R}\ri)\ri)\\
			&=\frac{1}{\mu_{\mathcal{V}}(x)}\sum_{y\sim x}\o(x,y)\le(\psi\le(\frac{d(y,x_0)-j}{R}\ri)-\psi\le(\frac{d(x,x_0)-j}{R}\ri)\ri)\\
			&=\frac{1}{\mu_{\mathcal{V}}(x)}\sum_{y\sim x}\o(x,y)\psi^\prime\le(\frac{d(x,x_0)-j}{R}\ri)\le(\frac{d(y,x_0)-d(x,x_0)}{R}\ri)\\
			&\quad+\frac{1}{2\mu_{\mathcal{V}}(x)}\sum_{y\sim x}\o(x,y)\psi^{\prime\prime}\le(\xi\ri)\le(\frac{d(y,x_0)-d(x,x_0)}{R}\ri)^2,
		\end{align*}
		for some $\xi$ between $(d(y,x_0)-j)/R$ and $(d(x,x_0)-j)/R$. Since $\psi^\prime\equiv0$ on $\tilde{B}_{R+j}$ and $\xi\leq1$ for all $x\in B_R\cap\mathcal{V}$, we have $\Delta_{\mathcal{V}}\Psi(x)\equiv0$ for $x\in B_R\cap\mathcal{V}$. For $x\in B_R^c\cap \mathcal{V}$ and $R\geq 2j$, observe that 
		\begin{equation}\label{s4:24}
			\xi\geq \min\le\{\frac{d(y,x_0)-j}{R},\frac{d(x,x_0)-j}{R}\ri\}\geq \frac{d(x,x_0)-2j}{R}\geq\frac{d(x,x_0)}{R}-1\geq0.
		\end{equation}
		By \eqref{s4:25}, \eqref{s4:14} and \eqref{s4:24}, we get for any $x\in\mathcal{V}$
		\begin{align*}
			\le|\Delta_{\mathcal{V}}\Psi(x)\ri|&\leq \frac{1}{R}\le|\psi^\prime\le(\frac{d(x,x_0)-j}{R}\ri)\ri|\frac{1}{\mu_{\mathcal{V}}(x)}\sum_{y\sim x}\o(x,y)\le(d(y,x_0)-d(x,x_0)\ri)\textbf{1}_{B_{R}^c}(x)\\
			&\quad+\frac{C}{2R^2}\frac{1}{\mu_{\mathcal{V}}(x)}\sum_{y\sim x}\o(x,y)e^{-\frac{\delta d(x,x_0)}{R}}\le(d(y,x_0)-d(x,x_0)\ri)^2\textbf{1}_{B_{R}^c}(x)\\
			&\leq \frac{C}{R} e^{-\delta\frac{d(x,x_0)-j}{R}} \Delta_\mathcal{V}d(x,x_0)\textbf{1}_{B_{R}^c}(x)+\frac{C}{2R^2}e^{-\frac{\delta d(x,x_0)}{R}}\textbf{1}_{B_{R}^c}(x)\frac{1}{\mu_{\mathcal{V}}(x)}\sum_{y\sim x}\o(x,y)j^2\\
			&\leq\le(\frac{C}{R}e^{-\delta\frac{ d(x,x_0)}{R}}+\frac{C }{2R^2}e^{-\delta\frac{d(x,x_0)}{R}}\ri)\textbf{1}_{B_{R}^c}(x)\\
			&\leq\frac{C}{R}e^{-\frac{\delta d(x,x_0)}{R}}\textbf{1}_{B_{R}^c}(x),
		\end{align*}
		where we have used \eqref{s3:1}-(v) and 
		$$|d(y,x_0)-d(x,x_0)|\leq d(x,y)\leq j.$$  
		
		Finally, for all $x\in\mathcal{V}$, for every $e\ni x$ with $e\in\mathcal{E}$, it follows from $(i)$ of Proposition \ref{s4:P1} that $\tilde{d}_e^{\prime}(x,x_0)=0$. By chain rule, we have
		\begin{equation*}
			\Psi_e^\prime(x)=\psi_e^\prime\le(\frac{\tilde{d}(x,x_0)-j}{R}\ri)\frac{1}{R}\tilde{d}_e^\prime(x,x_0)=0,\quad\forall\ x\in \mathcal{V},\ e\in\mathcal{E}.
		\end{equation*}
		Thus,
		$$	[\mathcal{K}(\Psi)](x)=\sum_{e\ni x}\frac{d\Psi_e(x)}{dn}=0,$$
		which completes the proof.
	\end{proof} 
	
		For the function \eqref{s4:15}, which lacks compact support, we need to strengthen the conditions on $u$; however, we can still derive the integration by parts formula for this case.
	\begin{lemma}\label{s4:L4}
		Suppose that $u$ is a solution to \eqref{s1:1} satisfying \eqref{s2:19}, and let $\Psi$ be as in \eqref{s4:15}. Assume $u\in X_\alpha$ with $\alpha=\delta/R>0$. Then we have
		\begin{equation}\label{s4:18}
	\int_\mathcal{G}\Psi(\Delta_{\mathcal{V}}u)d\mu_{\mathcal{V}}+\int_\mathcal{G}\Psi(\Delta_{\mathcal{E}}u)d\mu_{\mathcal{E}}=	\int_\mathcal{G}u(\Delta_{\mathcal{V}}\Psi)d\mu_{\mathcal{V}}+\int_\mathcal{G}u(\Delta_{\mathcal{E}}\Psi)d\mu_{\mathcal{E}}.
		\end{equation}
	\end{lemma}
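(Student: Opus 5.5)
The plan is to approximate $\Psi$ by compactly supported functions, apply the finite-support identities on the truncated pieces, and then pass to the limit using $u\in X_\alpha$ together with the exponential decay of $\Psi$ and its derivatives. Concretely, take the cut-off $\varphi_n(x):=\phi(\tilde d(x,x_0)/n)$ from \eqref{s4:37} with $R$ replaced by $n\to\infty$. Set $\Psi_n:=\Psi\varphi_n$. Then $\Psi_n$ has finite support and lies in $\mathcal D(\mathcal G)$. It satisfies $[\mathcal K(\Psi_n)](x)=0$ at every vertex and cut vertex, because by Proposition \ref{s4:P1}(i) both $\Psi_e'$ and $(\varphi_n)_e'$ vanish at vertices and $\varphi_n$ is flat at the cut points. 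Repeating the proof of Lemma \ref{s2:L1} verbatim with $\varphi^s$ replaced by $\Psi_n$ gives
\[
\int_\mathcal{G}\Psi_n(\Delta_{\mathcal V}u)\,d\mu_{\mathcal V}+\int_\mathcal{G}\Psi_n(\Delta_{\mathcal E}u)\,d\mu_{\mathcal E}=\int_\mathcal{G}u(\Delta_{\mathcal V}\Psi_n)\,d\mu_{\mathcal V}+\int_\mathcal{G}u(\Delta_{\mathcal E}\Psi_n)\,d\mu_{\mathcal E}.
\]
On the vertex side this uses \eqref{s2:18}. On the edge side the boundary terms vanish by \eqref{s2:19}, by $\varphi_n=0$ on the cut set, and by $[\mathcal K(\Psi_n)]=0$.

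The next step is to pass to the limit $n\to\infty$ on the right-hand side. Expand $(\Psi\varphi_n)''=\Psi''\varphi_n+2\Psi'\varphi_n'+\Psi\varphi_n''$. By Lemma \ref{s4:L3}, Proposition \ref{s4:P1}(ii), \eqref{s4:13}–\eqref{s4:14} and \eqref{s4:22}, each term is bounded by $Ce^{-\alpha d(x,x_0)}$ uniformly in $n$, with the $\varphi_n'$ and $\varphi_n''$ contributions of size $O(1/n)$. For the vertex Laplacian, write
\[
\Delta_{\mathcal V}(\Psi\varphi_n)(x)=\varphi_n(x)\Delta_{\mathcal V}\Psi(x)+\frac{1}{\mu_{\mathcal V}(x)}\sum_{y\sim x}\omega(x,y)\Psi(y)\bigl(\varphi_n(y)-\varphi_n(x)\bigr).
\]
The second term is $O(1/n)\,e^{-\alpha d(x,x_0)}$ by \eqref{s3:1}(v), the bound $d(x,y)\le j$, and the fact that $\Psi(y)\le Ce^{-\alpha d(x,x_0)}$. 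Since $u\in X_\alpha$, we have $|u|e^{-\alpha d}\in L^1(\mathcal G)$, so dominated convergence yields convergence of the right-hand side to the right-hand side of \eqref{s4:18}.

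The main obstacle is the left-hand side, since no integrability of $\Delta_{\mathcal V}u$ or $u''$ against $\Psi$ is assumed a priori. For the vertex term I would use symmetry of $\omega$ to rewrite $\sum_x\Psi_n(x)\sum_{y\sim x}\omega(x,y)(u(y)-u(x))$ as a sum of $u(x)$ against a Laplacian of $\Psi_n$. The only issue is absolute convergence of $\sum_x\mu_{\mathcal V}(x)\Psi(x)|\Delta_{\mathcal V}u(x)|$. By \eqref{s3:1}(v) and $\Psi(y)\le C\Psi(x)$ for $y\sim x$, this is bounded by $C\sum_x\mu_{\mathcal V}(x)|u(x)|e^{-\alpha d(x,x_0)}<\infty$. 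Hence dominated convergence applies and the vertex term converges.

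For the edge term I would use the sign information. Since $u''\le -V|u|^\sigma\le 0$ on edges, the quantity $\Psi_n(-u'')$ is nonnegative and increases in $n$ because $\phi$ is nonincreasing. By monotone convergence, $\int\Psi_n(\Delta_{\mathcal E}u)\,d\mu_{\mathcal E}\to\int\Psi(\Delta_{\mathcal E}u)\,d\mu_{\mathcal E}$, where the limit is a priori in $[-\infty,0]$. The identity, combined with the finite limits of all the other terms, then forces this limit to be finite and gives \eqref{s4:18}. If the monotone argument is delicate, a fallback is to note that the identity at level $n$ already bounds $\int\Psi_n|u''|$ uniformly in $n$.
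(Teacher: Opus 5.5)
Your argument is correct, but it handles the edge term by a genuinely different route from the paper.

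\textbf{What the paper does.} It does not truncate. For the vertex identity it relies on Proposition 5.2 of \cite{M-P-S2}. On edges it integrates by parts twice over every edge of the infinite graph at once. It then discards $\sum_{x}u(x)[\mathcal{K}(\Psi)](x)$ and $\sum_{x}\Psi(x)[\mathcal{K}(u)](x)$ using \eqref{s4:17} and \eqref{s2:19}. Finally it concludes that $\int_\mathcal{G}\Psi(\Delta_\mathcal{E}u)\,d\mu_\mathcal{E}$ converges absolutely from the bound on $\sum_e\int_0^{l_e}|u_e||\Psi_e''|\,dx$, that is, from \eqref{s4:21} itself.

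This route is shorter and never uses the sign of $u''$, but it has a soft spot. The second boundary sum is obtained by regrouping the infinite series $\sum_e (u_e'(l_e)\Psi_e(l_e)-u_e'(0)\Psi_e(0))$ vertex by vertex. That regrouping needs summability of $|u_e'|\Psi$ at the endpoints, and nothing in $u\in X_\alpha$ controls $u'$. The $\mathcal{K}(\Psi)$ term is harmless, since $\Psi_e'$ vanishes at both endpoints of every edge by Proposition \ref{s4:P1}(i).

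\textbf{What your truncation buys.} With the cut-off $\Psi_n=\Psi\varphi_n$:
\begin{enumerate}[(i)]
\item Every boundary sum is finite, so the Kirchhoff condition applies exactly.
\item The commutator terms $2\Psi'\varphi_n'+\Psi\varphi_n''$ and $\Psi(y)(\varphi_n(y)-\varphi_n(x))$ are $O(1/n)e^{-\alpha d}$. They are absorbed by the weighted $L^1$ bound on $u$ alone, with no information on $u'$.
\item Finiteness of $\int_\mathcal{G}\Psi(\Delta_\mathcal{E}u)\,d\mu_\mathcal{E}$ comes out as a conclusion of monotone convergence, rather than being read off the identity being proved.
\end{enumerate}

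\textbf{The cost.} You use $u_e''\leq -V_e|u_e|^\sigma\leq 0$, which is legitimately available because $u$ solves \eqref{s1:1}. Your vertex step is also done by hand: absolute summability follows from \eqref{s3:1}(v), the symmetry of $\omega$, and $\Psi(y)\leq C\Psi(x)$ for $y\sim x$. This makes explicit what the paper leaves to the citation.

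On balance, your version is the more complete proof of \eqref{s4:18}.
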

	\begin{proof}
		Since $u\in X_\alpha$ and $\alpha=\delta/R$ for some $\delta>0$ and $R\geq\max\{1,2j\}$, by \eqref{s2:13}, we know 
		\begin{equation}\label{s4:27}
			\sum_{x\in\mathcal{V}}\mu_{\mathcal{V}}(x)|u(x)| e^{-\delta\frac{d(x,x_0)}{R}}\leq C,\quad\sum_{e\in\mathcal{E}}\int_{0}^{l_e}|u_e(x)|e^{-\delta \frac{d(x,x_0)}{R}}dx\leq C.
		\end{equation}
		It follows from \eqref{s4:22} that for any $R\geq 2j$,
		\begin{equation*}
			C\geq \sum_{e\in\mathcal{E}}\int_{0}^{l_e}|u_e(x)|e^{-\delta \frac{d(x,x_0)}{R}}dx\geq \sum_{e\in\mathcal{E}}\int_{0}^{l_e}|u_e(x)|e^{-\delta \frac{\tilde{d}(x,x_0)+j}{R}}dx\geq \sum_{e\in\mathcal{E}}\int_{0}^{l_e}|u_e(x)|e^{-\delta \frac{\tilde{d}(x,x_0)}{R}}e^{-\frac{\delta}{2}}dx.
		\end{equation*}
		This implies that
		\begin{equation}\label{s4:39}
			\sum_{e\in\mathcal{E}}\int_{0}^{l_e}|u_e(x)|e^{-\delta \frac{\tilde{d}(x,x_0)}{R}}dx\leq C.
		\end{equation}
		In view of \eqref{s4:13}, we have
		$$|\Psi(x)|\leq Ce^{-\delta\frac{d(x,x_0)}{R}},\quad\forall\ x\in\mathcal{V}.$$
	  By Proposition 5.2 in \cite{M-P-S2}, combined with \eqref{s4:27}, we deduce that
	  \begin{equation}\label{s4:19}
	  		\int_\mathcal{G}\Psi(\Delta_{\mathcal{V}}u)d\mu_{\mathcal{V}}=\int_\mathcal{G}u(\Delta_{\mathcal{V}}\Psi)d\mu_{\mathcal{V}},
	  \end{equation}
	  which is omitted here. Hence, in order to establish \eqref{s4:18}, it is enough to verify
	  \begin{equation}\label{s4:20}
	  	\int_\mathcal{G}\Psi(\Delta_{\mathcal{E}}u)d\mu_{\mathcal{E}}=	\int_\mathcal{G}u(\Delta_{\mathcal{E}}\Psi)d\mu_{\mathcal{E}}.
	  \end{equation}
	  By the formula for integration by parts twice, it follows from \eqref{s2:19} and \eqref{s4:17}that
	  \begin{align}\label{s4:21}
	  	\nonumber	\int_\mathcal{G}\Psi(\Delta_{\mathcal{E}}u)d\mu_{\mathcal{E}}&=\sum_{e\in\mathcal{E}}\int_{0}^{l_e} u_e^{\prime\prime}(x)\Psi_e(x) dx\\
	  	\nonumber&= -\sum_{e\in{\mathcal{E}}}\int_{0}^{l_e} u_e^{\prime}(x)\Psi_e^{\prime}(x) dx+\sum_{e\in{\mathcal{E}}}\le(u_e^\prime(l_e)\Psi_e(l_e)-u_e^\prime(0)\Psi_e(0)\ri)\\
	  	\nonumber&=\sum_{e\in{\mathcal{E}}}\int_{0}^{l_e} u_e(x)\Psi_e^{\prime\prime}(x) dx-\sum_{e\in{\mathcal{E}}}\le(u_e(l_e)(\Psi_e^\prime(l_e)-u_e(0)\Psi_e^\prime(0)\ri)+\sum_{e\in{\mathcal{E}}}\le(u_e^\prime(l_e)\Psi_e(l_e)-u_e^\prime(0)\Psi_e(0)\ri)\\
	  	\nonumber&=\sum_{e\in{\mathcal{E}}}\int_{0}^{l_e} u_e(x)\Psi_e^{\prime\prime}(x) dx-\sum_{x\in\mathcal{V}}u(x)[\mathcal{K}(\Psi)](x)+\sum_{x\in\mathcal{V}}\Psi(x)[\mathcal{K}(u)](x)\\
	  	&=\sum_{e\in{\mathcal{E}}}\int_{0}^{l_e} u_e(x)\Psi_e^{\prime\prime}(x) dx.
	  \end{align}
	  Moreover, by \eqref{s4:39} and \eqref{s4:21}, we derive from \eqref{s4:9} that
	  \begin{align*}
	  	\le|\int_\mathcal{G}\Psi(\Delta_{\mathcal{E}}u)d\mu_{\mathcal{E}}\ri|\leq\sum_{e\in{\mathcal{E}}}\int_{0}^{l_e} |u_e(x)|\le|\Psi_e^{\prime\prime}(x) \ri|dx\leq \frac{C}{R_0} \sum_{e\in\mathcal{E}}\int_{0}^{l_e}|u_e(x)|e^{-\delta \frac{\tilde{d}(x,x_0)}{R}}dx\leq C,
	  \end{align*}
	  which yields that 
	  $$\int_\mathcal{G}\Psi(\Delta_{\mathcal{E}}u)d\mu_{\mathcal{E}}$$
	  converges absolutely. Thus, \eqref{s4:20} follows immediately.
	\end{proof}
	
		Building on Lemma \ref{s4:L4}, we can now show a priori $L_{V}^\sigma$ estimates for the solution $u\in X_\alpha$.
	\begin{lemma}\label{s4:L5}
		Under the conditions of Lemmas \ref{s4:L3} and \ref{s4:L4},  assume further that the potential $V$ satisfies \eqref{s3:4}. Then $u\in L_{V}^\sigma(\mathcal{G})$ for all $\sigma>1$. Specifically, there exists a constant $C>0$ such that 
		$$\int_{\mathcal{G}}V(x)|u(x)|^\sigma d\mu_\mathcal{G}\leq C.$$
	\end{lemma}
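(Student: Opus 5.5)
The plan mirrors the proof of Lemma \ref{s4:L1}, with $\varphi^s$ replaced by the positive, non-compactly supported test function $\Psi$ from \eqref{s4:15}. Since $u$ may change sign, I would not raise $\Psi$ to a power $s$. The convexity trick in \eqref{s4:16} used $u\ge0$, so here I bound $|\Delta_{\mathcal V}\Psi|$ directly by \eqref{s4:26}.

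\textbf{Step 1 (testing the inequality).} Multiply both inequalities in the definition of solution by $\Psi>0$, sum over vertices with weight $\mu_{\mathcal V}$, and integrate over the edges. This gives
\begin{equation*}
\int_{\mathcal G}V|u|^\sigma\Psi\,d\mu_{\mathcal G}\le-\int_{\mathcal G}\Psi(\Delta_{\mathcal V}u)\,d\mu_{\mathcal V}-\int_{\mathcal G}\Psi(\Delta_{\mathcal E}u)\,d\mu_{\mathcal E}.
\end{equation*}
Lemma \ref{s4:L4} turns the right-hand side into
\begin{equation*}
-\int_{\mathcal G}u(\Delta_{\mathcal V}\Psi)\,d\mu_{\mathcal V}-\int_{\mathcal G}u\,\Psi''\,d\mu_{\mathcal E}.
\end{equation*}
That lemma also shows both edge integrals converge absolutely. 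The vertex sums converge by \eqref{s4:26} and $u\in X_\alpha$.

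One point needs care: the left-hand side might a priori be $+\infty$. This is harmless, because the right-hand side is finite and the inequality holds by monotone convergence over exhausting finite sets.

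\textbf{Step 2 (bounding the right-hand side).} By \eqref{s4:26} and \eqref{s4:9}, the right-hand side is at most
\begin{equation*}
\frac CR\sum_{x\in\mathcal V\cap B_R^c}\mu_{\mathcal V}|u|e^{-\delta d/R}+\frac CR\sum_e\int_0^{l_e}|u_e|e^{-\delta\tilde d/R}\mathbf 1_{\tilde B^c_{R+j}}\,dx.
\end{equation*}
On edges I replace $\tilde d$ by $d$ using \eqref{s4:22}, at the cost of the factor $e^{\delta j/R}\le e^{\delta/2}$. Also $\tilde B^c_{R+j}\subset B^c_R$, since $\tilde d\ge R+j$ implies $d\ge R$.

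Next I write $e^{-\delta d/R}$ as a multiple of $\Psi$ using \eqref{s4:13}. On $B_R^c$ we have $\Psi\ge C_2e^{-\delta(\tilde d-j)/R}\ge c\,e^{-\delta d/R}$. So each term is bounded by
\begin{equation*}
\frac CR\int|u|\,\Psi\,\mathbf 1_{B_R^c}.
\end{equation*}

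I then apply Young's inequality pointwise in the form
\begin{equation*}
\frac CR|u|\Psi\le\frac1{2}V|u|^\sigma\Psi+C'R^{-\frac{\sigma}{\sigma-1}}V^{-\frac1{\sigma-1}}\Psi.
\end{equation*}
Finally I use $\Psi\le C_1 e^{-\delta(\tilde d-j)/R}\le Ce^{-\alpha d}$.

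\textbf{Step 3 (absorption and conclusion).} Absorbing the term $\tfrac12\int V|u|^\sigma\Psi$ into the left-hand side leaves
\begin{equation*}
\frac12\int V|u|^\sigma\Psi\,d\mu_{\mathcal G}\le CR^{-\frac{\sigma}{\sigma-1}}\Big(\sum_{\mathcal V\cap B_R^c}\mu_{\mathcal V}V^{-\frac1{\sigma-1}}e^{-\alpha d}+\sum_{e\in B_R^c}\int_0^{l_e} V_e^{-\frac1{\sigma-1}}e^{-\alpha d}\,dx\Big)\le C
\end{equation*}
by \eqref{s3:4}. The absorption is legitimate because the left-hand side is finite: it is bounded by the finite right-hand side of Step 1.

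Since $\Psi\equiv1$ on $\tilde B_{R+j}\supset B_R$, this gives $\int_{B_R}V|u|^\sigma\,d\mu_{\mathcal G}\le C$. The constant is uniform in $R$ as long as $\delta$ is fixed and $R\ge R_0$, so letting $R\to\infty$ yields the claim.

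\textbf{Main obstacle.} The delicate point is consistency of the weights. Theorem \ref{s3:T2} fixes $\alpha=\delta/R$ while $R$ varies, so I need $u\in X_{\delta/R}$ for every large $R$. I would interpret the hypothesis this way; since $e^{-\alpha d}$ increases as $\alpha$ decreases, this is the natural reading. The other subtlety is the $\tilde d$-versus-$d$ comparison, where the shifts by $j$ only produce constants $e^{\pm\delta}$.
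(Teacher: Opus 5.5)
Your proposal is correct and follows essentially the same route as the paper: test \eqref{s2:21} with $\Psi$, move the Laplacians onto $\Psi$ via Lemma \ref{s4:L4}, and bound the resulting terms by \eqref{s4:26} and \eqref{s4:9}; then compare $e^{-\delta d/R}$ with $\Psi$ through \eqref{s4:13} and \eqref{s4:22}, apply Young's inequality and \eqref{s3:4}, absorb, and let $R\to\infty$ using $\Psi\equiv1$ on $\tilde B_{R+j}$. The differences are cosmetic. You convert to the weight $\Psi$ before applying Young rather than after, and you make explicit two points the paper uses only implicitly: the finiteness of the left-hand side needed for absorption, and the requirement $u\in X_{\delta/R}$ for every large $R$.
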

	
	\begin{proof}
		Observe that $u\in \mathcal{D}(\mathcal{G})$ is a solution of \eqref{s1:1} and $0<\Psi(x)\leq1$ for all $x\in\mathcal{G}$. Multiplying both sides of \eqref{s2:21} by $\Psi(x)$,  we have
		$$	V(x)\Psi(x) |u(x)|^{\sigma}\le(d\mu_{\mathcal{V}}+d\mu_{\mathcal{E}}\ri)\leq-\le(\Delta_\mathcal{G} u(x)\ri)\Psi(x).$$
		Integrating over $\mathcal{G}$, we obtain from \eqref{s4:18} that
		\begin{align}\label{s4:29}
			\nonumber\int_{\mathcal{G}}V(x)\Psi(x) |u(x)|^{\sigma} d\mu_{\mathcal{G}}&=\int_{\mathcal{G}}V(x)\Psi(x) |u(x)|^{\sigma}d\mu_{\mathcal{V}}+\int_{\mathcal{G}}V(x)\Psi(x) |u(x)|^{\sigma}d\mu_{\mathcal{E}}\\
			&\leq-	\int_\mathcal{G}u(\Delta_{\mathcal{V}}\Psi)d\mu_{\mathcal{V}}-\int_\mathcal{G}u(\Delta_{\mathcal{E}}\Psi)d\mu_{\mathcal{E}}.
		\end{align}
		Using \eqref{s3:4}, \eqref{s4:26}, and Young's inequality with exponent $\sigma>1$, we derive that for any $\epsilon>0$,
		\begin{align}\label{s4:30}
			\nonumber\le|-\int_\mathcal{G}u(\Delta_{\mathcal{V}}\Psi)d\mu_{\mathcal{V}}\ri|&\leq \sum_{x\in\mathcal{V}}\mu_{\mathcal{V}}(x) |u(x)|\le|\Delta_\mathcal{V} \Psi(x) \ri|\\
			\nonumber&\leq\frac{C}{R}\sum_{x\in\mathcal{V}}\mu_{\mathcal{V}}(x) |u(x)|e^{-\frac{\delta d(x,x_0)}{R}}\textbf{1}_{B_{R}^c}(x)\\
			\nonumber&\leq\epsilon\sum_{x\in\mathcal{V}}\mu_{\mathcal{V}}(x) |u(x)|^{\sigma}V(x)e^{-\frac{\delta d(x,x_0)}{R}}\textbf{1}_{B_{R}^c}(x)+\frac{C_{\epsilon}}{R^{\frac{\sigma}{\sigma-1}}}\sum_{x\in\mathcal{V}}\mu_{\mathcal{V}}(x) V^{-\frac{1}{\sigma-1}}(x)e^{-\frac{\delta d(x,x_0)}{R}}\textbf{1}_{B_{R}^c}(x)\\
			\nonumber&\leq \epsilon C\sum_{x\in\mathcal{V}}\mu_{\mathcal{V}}(x) |u(x)|^{\sigma}V(x)\Psi(x)+\frac{C_{\epsilon}}{R^{\frac{\sigma}{\sigma-1}}}\sum_{x\in\mathcal{V}\cap B_{R}^c}\mu_{\mathcal{V}}(x) V^{-\frac{1}{\sigma-1}}(x)e^{-\frac{\delta d(x,x_0)}{R}}\\
			&\leq \epsilon C\sum_{x\in\mathcal{V}}\mu_{\mathcal{V}}(x) |u(x)|^{\sigma}V(x)\Psi(x)+C_\epsilon.
		\end{align}
		where in the penultimate step we have used \eqref{s4:13}. For the edge term, it is easy to obtain
		$$\tilde{B}_{R+j}^c\subset B_{R}^c.$$
		In fact, if $x\in \tilde{B}_{R+j}^c$, then $\tilde{d}(x,x_0)\geq R+j$. It follows from \eqref{s4:22} that 
		$$d(x,x_0)\geq \tilde{d}(x,x_0)-j\geq R.$$ 
		Thus, $x\in B_{R}^c$. By means of \eqref{s3:4} and \eqref{s4:9}, for any $\epsilon>0$, we deduce 
		\begin{align}\label{s4:31}
			\nonumber&\le|-\int_\mathcal{G}u(\Delta_{\mathcal{E}}\Psi)d\mu_{\mathcal{E}}\ri|\\
			\nonumber&\leq\sum_{e\in\mathcal{E}}\int_{0}^{l_e}|u_e(x)||\Psi_e^{\prime\prime}(x)|dx\\
			\nonumber&\leq \frac{C}{R}\sum_{e\in\mathcal{E}}\int_{0}^{l_e}|u_e(x)| e^{\frac{-\delta \tilde{d}(x,x_0)}{R}}\textbf{1}_{\tilde{B}_{R+j}^c}(x)dx\\
			\nonumber&\leq \epsilon \sum_{e\in\mathcal{E}}\int_{0}^{l_e}  V_e(x) |u_e(x)|^{\sigma} e^{\frac{-\delta \tilde{d}(x,x_0)}{R}}\textbf{1}_{\tilde{B}_{R+j}^c}(x) dx+ \frac{C_{\epsilon}}{R^{\frac{\sigma}{\sigma-1}}}\sum_{e\in\mathcal{E}}\int_{0}^{l_e} V_e^{-\frac{1}{\sigma-1}}(x) e^{\frac{-\delta \tilde{d}(x,x_0)}{R}} \textbf{1}_{\tilde{B}_{R+j}^c}(x)dx\\
				\nonumber&\leq \epsilon \sum_{e\in\mathcal{E}}\int_{0}^{l_e}  V_e(x) |u_e(x)|^{\sigma} e^{\frac{-\delta \tilde{d}(x,x_0)}{R}}\textbf{1}_{\tilde{B}_{R+j}^c}(x) dx+ \frac{C_{\epsilon}}{R^{\frac{\sigma}{\sigma-1}}}\sum_{e\in\mathcal{E}}\int_{0}^{l_e} V_e^{-\frac{1}{\sigma-1}}(x) e^{\frac{-\delta d(x,x_0)+\delta j}{R}} \textbf{1}_{\tilde{B}_{R+j}^c}(x)dx\\
			\nonumber&\leq \epsilon C\sum_{e\in\mathcal{E}}\int_{0}^{l_e}  V_e(x) |u_e(x)|^{\sigma} \Psi_e(x)\textbf{1}_{\tilde{B}^c_{R+j}}(x)dx+ \frac{C_{\epsilon,\delta}}{R^{\frac{\sigma}{\sigma-1}}}\sum_{e\in \mathcal{E}}\int_{0}^{l_e} V_e^{-\frac{1}{\sigma-1}}(x) e^{\frac{-\delta d(x,x_0)}{R}} \textbf{1}_{ B_{R}^c}(x)dx\\
			\nonumber&\leq  \epsilon C\sum_{e\in\mathcal{E}}\int_{0}^{l_e}  V_e(x) |u_e(x)|^{\sigma} \Psi_e(x)dx+ \frac{C_{\epsilon,\delta}}{R^{\frac{\sigma}{\sigma-1}}}\sum_{e\in\mathcal{E}\cap B_{R}^c}\int_{0}^{l_e} V_e^{-\frac{1}{\sigma-1}}(x) e^{\frac{-\delta \tilde{d}(x,x_0)}{R}}dx\\
			&\leq \epsilon C\sum_{e\in\mathcal{E}}\int_{0}^{l_e}  V_e(x) |u_e(x)|^{\sigma} \Psi_e(x)dx+ C_{\epsilon,\delta}.
		\end{align}
		Choosing $\epsilon>0$ sufficiently small, combining \eqref{s4:30},  \eqref{s4:31} with \eqref{s4:29}, we obtain 
		\begin{align*}
			\int_{\mathcal{G}}V(x)\Psi(x) |u(x)|^{\sigma}d\mu_{\mathcal{G}}\leq\frac{1}{2} \int_{\mathcal{G}}V(x)\Psi(x) |u(x)|^{\sigma}d\mu_{\mathcal{G}}+ C.
		\end{align*}
		Noting that $\Psi(x)\equiv1$ whenever $x\in \tilde{B}_{R+j}$, we get
		\begin{align*}
			\int_{\tilde{B}_{R+j}}V(x)|u(x)|^{\sigma}d\mu_{\mathcal{G}}\leq\int_{\mathcal{G}}V(x)\Psi(x) |u(x)|^{\sigma}d\mu_{\mathcal{G}}\leq C.
		\end{align*}
		Here, $C$ is a constant  independent of $R$. Hence, the conclusion immediately follows by taking the limit as $R\ra\infty$.
	\end{proof}
	
		Finally, we present the proofs of Theorem \ref{s3:T2}. \\
	
		\noindent$\textbf{\emph{Proof of Theorem \ref{s3:T2}.}}$  Under the conditions of Lemma \ref{s4:L4}, we now show that $u\equiv0$. Making use of H\"{o}lder's inequality and condition \eqref{s3:4}, we estimate the vertex term in \eqref{s4:30} as
	\begin{align}\label{s4:28}
		\nonumber\le|-\int_{\mathcal{G}}u\le(\Delta_\mathcal{V} \Psi \ri)d\mu_\mathcal{V}\ri|&\leq\frac{C}{R}\sum_{x\in\mathcal{V}}\mu_{\mathcal{V}}(x) |u(x)|e^{-\frac{\delta d(x,x_0)}{R}}\textbf{1}_{B_{R}^c}(x)\\
		\nonumber&\leq \frac{C}{R}\le(\sum_{x\in\mathcal{V}}\mu_{\mathcal{V}}(x) |u(x)|^\sigma V(x)e^{-\frac{\delta d(x,x_0)}{R}}\textbf{1}_{B_{R}^c}(x)\ri)^{\frac{1}{\sigma}}\le(\sum_{x\in\mathcal{V}}\mu_{\mathcal{V}}(x) V^{-\frac{1}{\sigma-1}}(x)e^{-\frac{\delta d(x,x_0)}{R}}\textbf{1}_{ B_{R}^c}(x)\ri)^{\frac{\sigma-1}{\sigma}}\\
		\nonumber&\leq C\le(\sum_{x\in\mathcal{V}}\mu_{\mathcal{V}}(x) |u(x)|^\sigma V(x)e^{-\frac{\delta d(x,x_0)}{R}}\textbf{1}_{ B_{R}^c}(x)\ri)^{\frac{1}{\sigma}}\\
		&\leq C\le(\sum_{x\in\mathcal{V}\cap B_{R}^c}\mu_{\mathcal{V}}(x) |u(x)|^\sigma V(x)\ri)^{\frac{1}{\sigma}}.
	\end{align}
	For the edge term, a similar argument using \eqref{s4:31} yields
	\begin{align}\label{s4:32}
		\nonumber&\le|-\int_{\mathcal{G}}u\le(\Delta_\mathcal{E} \Psi \ri)d\mu_\mathcal{E}\ri|\\
		\nonumber&\leq\frac{C}{R}\sum_{e\in\mathcal{E}}\int_{0}^{l_e}|u_e(x)| e^{\frac{-\delta \tilde{d}(x,x_0)}{R}}\textbf{1}_{\tilde{B}_{R+j}^c}(x)dx\\
		\nonumber&\leq \frac{C}{R} \sum_{e\in\mathcal{E}}\le\{\le(\int_{0}^{l_e}V_e(x) |u_e(x)|^\sigma e^{\frac{-\delta \tilde{d}(x,x_0)}{R}}\textbf{1}_{\tilde{B}_{R+j}^c}(x) dx\ri)^\frac{1}{\sigma}\le(\int_{0}^{l_e}V_e^{-\frac{1}{\sigma-1}}(x) e^{\frac{-\delta \tilde{d}(x,x_0)}{R}} \textbf{1}_{ \tilde{B}_{R+j}^c}(x)dx\ri)^\frac{\sigma-1}{\sigma}\ri\}\\
		\nonumber&\leq \frac{C}{R} \le(\sum_{e\in\mathcal{E}}\int_{0}^{l_e}V_e(x) |u_e(x)|^\sigma e^{\frac{-\delta \tilde{d}(x,x_0)}{R}}\textbf{1}_{\tilde{B}_{R+j}^c}(x) dx\ri)^\frac{1}{\sigma}\le(\sum_{e\in\mathcal{E}}\int_{0}^{l_e}V_e^{-\frac{1}{\sigma-1}}(x) e^{\frac{-\delta \tilde{d}(x,x_0)}{R}} \textbf{1}_{ \tilde{B}_{R+j}^c}(x)dx\ri)^\frac{\sigma-1}{\sigma}\\
		\nonumber&\leq  \frac{C}{R} \le(\sum_{e\in\mathcal{E}}\int_{0}^{l_e}V_e(x) |u_e(x)|^\sigma \textbf{1}_{ \tilde{B}_{R+j}^c}(x) dx\ri)^\frac{1}{\sigma}\le(\sum_{e\in \mathcal{E}}\int_{0}^{l_e}V_e^{-\frac{1}{\sigma-1}}(x) e^{\frac{-\delta d(x,x_0)}{R}}  \textbf{1}_{ \tilde{B}_{R+j}^c}(x) dx\ri)^\frac{\sigma-1}{\sigma}\\
		\nonumber&\leq\frac{C}{R} \le(\sum_{e\in\mathcal{E}}\int_{0}^{l_e}V_e(x) |u_e(x)|^\sigma \textbf{1}_{B_R^c}(x) dx\ri)^\frac{1}{\sigma}\le(\sum_{e\in \mathcal{E}\cap B_{R}^c}\int_{0}^{l_e}V_e^{-\frac{1}{\sigma-1}}(x) e^{\frac{-\delta d(x,x_0)}{R}} dx\ri)^\frac{\sigma-1}{\sigma}\\
		& \leq C\le(\sum_{e\in \mathcal{E}\cap B_{R}^c}\int_{0}^{l_e}V_e(x) |u_e(x)|^\sigma dx\ri)^\frac{1}{\sigma}.
	\end{align}
	Substituting the estimates \eqref{s4:28} and \eqref{s4:32} into \eqref{s4:29}, we obtain
	\begin{equation*}
		\int_{\tilde{B}_{R+j}}V(x) |u(x)|^\sigma d\mu_{\mathcal{G}}\leq \int_{\mathcal{G}}V(x)\Psi(x) |u(x)|^{\sigma}d\mu_{\mathcal{G}}\leq C\le(\sum_{x\in\mathcal{V}\cap B_{R}^c}\mu_{\mathcal{V}}(x) |u(x)|^\sigma V(x)\ri)^{\frac{1}{\sigma}}+C\le(\sum_{e\in \mathcal{E}\cap B_{R}^c}\int_{0}^{l_e}V_e(x) |u_e(x)|^\sigma dx\ri)^\frac{1}{\sigma}.
	\end{equation*}
	Letting $R\to\infty$, it then follows from Lemma \ref{s4:L5} that
	$$	\int_{\mathcal{G}}V(x)|u(x)|^\sigma d\mu_\mathcal{G}\leq 0,$$
	which implies that $u\equiv0$ on $\mathcal{G}$. $\hfill\Box$\\

	\noindent	\textbf{Acknowledgements}   The author would like to appreciate the reviewers and editors for their careful reading, constructive comments and helpful suggestions on this paper. This work was supported by the National Natural Science Foundation of China under Grant No.12471088.\\

	\noindent\textbf{Disclosure of interest}
	The authors report there are no competing interests to declare.
	\\
	
	\noindent\textbf{Data availability}
	Data sharing not applicable as no datasets were used or analysed during the current study.
	\\

\end{document}